\newenvironment{enumeratei}{\begin{enumerate}[\upshape (i)]}%
{\end{enumerate}}
\newenvironment{enumerater}{\begin{enumerate}[\upshape (1)]}%
{\end{enumerate}}
\newcommand{\conj}{\mathbin{\bigwedge\mkern-15mu\bigwedge}}
\newcommand{\pup}[1]{\textup{(}{#1}\textup{)}}
\newcommand{\eqdef}{\overset{\mathrm{def}}{=}}
\newcommand{\spd}[2]{\left({#1}\mid{#2}\right)}
\newcommand{\Mat}[2]{\operatorname{M}_{{#1}}({#2})}
\newcommand{\Jac}{\mathrm{J}}
\DeclareMathOperator{\Add}{Add}
\DeclareMathOperator{\card}{card}
\newcommand{\ga}{\alpha}
\newcommand{\gc}{\gamma}
\newcommand{\gd}{\delta}
\newcommand{\gf}{\varphi}
\newcommand{\gl}{\lambda}
\newcommand{\gS}{\Sigma}
\newcommand{\one}{\mathbf{1}}
\newcommand{\ol}[1]{\overline{#1}}
\newcommand{\pI}[1]{\bigl({#1}\bigr)}
\newcommand{\pII}[1]{\Bigl({#1}\Bigr)}
\newcommand{\set}[1]{\left\{#1\right\}}
\newcommand{\setm}[2]{\set{{#1}\mid{#2}}}
\newcommand{\vecm}[2]{({#1}\mid{#2})}
\newcommand{\seq}[1]{\langle{#1}\rangle}
\newcommand{\sseq}[1]{\langle\!\langle{#1}\rangle\!\rangle}
\newcommand{\ZZ}{\mathbb{Z}}
\newcommand{\QQ}{\mathbb{Q}}
\newcommand{\cL}{{\mathcal{L}}}
\newcommand{\cR}{{\mathcal{R}}}
\newcommand{\cS}{{\mathcal{S}}}
\newcommand{\cT}{{\mathcal{T}}}
\numberwithin{equation}{section}
\newtheorem*{stat}{\name}
\newcommand{\name}{testing}
\theoremstyle{plain}
\newtheorem{theorem}{Theorem}[section]
\newtheorem{proposition}[theorem]{Proposition}
\newtheorem{corollary}[theorem]{Corollary}
\newtheorem{lemma}[theorem]{Lemma}
\theoremstyle{definition}
\newtheorem{definition}[theorem]{Definition}
\newtheorem{notation}[theorem]{Notation}
\newtheorem{example}[theorem]{Example}
\newtheorem{problem}{Problem}
\theoremstyle{remark}
\newtheorem{remark}[theorem]{Remark}
\newtheorem*{note}{Note}
\newcommand{\qedc}{{\qed}~{\rm Claim~{\theclaim}.}}
\newcommand{\qedsc}{{\qed}~{\rm Claim.}}
\numberwithin{figure}{section}
\numberwithin{table}{section}
\newcommand{\vp}{\mathsf{p}}
\newcommand{\vq}{\mathsf{q}}
\newcommand{\vA}{\mathsf{A}}
\newcommand{\vE}{\mathsf{E}}
\newcommand{\vS}{\mathsf{S}}
\newcommand{\kk}{\Bbbk}
\title[Definability of addition]{Is addition definable from multiplication and successor?}
\author[F. Wehrung]{Friedrich Wehrung}
\address{Normandie Universit\'e, UNICAEN\\
CNRS UMR 6139, LMNO\\
14000 Caen\\
France}
\email{friedrich.wehrung01@unicaen.fr}
\urladdr{https://wehrungf.users.lmno.cnrs.fr}
\date{\today}
\subjclass[2020]{16R60; 16R20; 16R40; 16U50; 16E50; 16N20; 16D60; 16B70; 16Y30; 16Y60}
\keywords{Ring; associative; unital; commutative; functional identity; additive; multiplicative; successor; brachymorphism; brachynomial; summable; addable}
\begin{document}

\begin{abstract}
A map $f\colon R\to S$ between (associative, unital, but not necessarily commutative) rings is a \emph{brachymorphism} if $f(1+x)=1+f(x)$ and $f(xy)=f(x)f(y)$ whenever $x,y\in R$.
We tackle the problem whether every brachymorphism is additive (i.e., $f(x+y)=f(x)+f(y)$), showing that in many contexts, including the following, the answer is positive:
\begin{itemize}
\item
$R$ is finite (or, more generally, $R$ is left or right Artinian);

\item
$R$ is any ring of $2\times2$ matrices over a commutative ring;

\item
$R$ is Engelian;

\item
every element of~$R$ is a sum of $\pi$-regular and central elements (this applies to $\pi$-regular rings, Banach algebras, and power series rings);

\item
$R$ is the full matrix ring of order greater than~$1$ over any ring;

\item
$R$ is the monoid ring~$K[M]$ for a commutative ring~$K$ and a $\pi$-regular monoid~$M$;

\item
$R$ is the Weyl algebra~$\vA_1(K)$ over a commutative ring~$K$ with positive characteristic;

\item
$f$ is the power function $x\mapsto x^n$ over any ring;

\item
$f$ is the determinant function over any ring~$R$ of $n\times n$ matrices, with $n\geq3$, over a commutative ring, such that if $n>3$ then~$R$ contains~$n$ scalar matrices with non zero divisor differences.

\end{itemize}
%We leave open the problem whether every brachymorphism is additive.
\end{abstract}

\maketitle

\tableofcontents

\section{Introduction}\label{S:Intro}

The present paper investigates the natural question whether the addition in an associative, unital ring is determined by the multiplication together with the successor function $x\mapsto 1+x$.
While the question whether this can be done equationally has been extensively studied (cf. Subsection~\ref{Su:Bckgrd} for more details), the general question remains open.
Here we will focus on a notion of definability involving a weakening of the concept of homomorphism.
It will turn out (cf. Corollary~\ref{C:MainConj}) that this amounts to studying the positive primitive definability of addition from multiplication and successor.

\subsection{The problem}\label{Su:IntroPb}
A map $f\colon R\to S$ between (associative, unital, but not necessarily commutative) rings will be called a \emph{brachymorphism}%
\footnote{
The prefix \emph{brachy}, from the ancient Greek \emph{brakh\'ys}, means ``short''. (\emph{Source}: Wiktionary.)}
if $f(1+x)=1+f(x)$ and $f(xy)=f(x)f(y)$ whenever $x,y\in R$.
For every brachymorphism $f\colon R\to S$ and every $x\in R$, $f(x)f(0)=f(x0)=f(0)$, thus
 \[
 f(0)+f(0)=f(0)+f(1)f(0)=(1+f(1))f(0)=f(1+1)f(0)=f(0)\,,
 \]
thus $f(0)=0$, and thus also $f(1)=1$.
The question whether every brachymorphism is additive (and thus a ring homomorphism) remains a mystery to the author.
A positive solution to that problem would probably destroy the present paper.

We verify that the answer to the question is positive in many ``classical'' contexts, thus letting the concept of brachymorphism ring various bells including regular rings (in von~Neumann's sense), commutative rings (or, more generally, PI rings), and rings of matrices (over commutative rings).

\subsection{Background}\label{Su:Bckgrd}
Throughout the paper all our rings will be unital, but not necessarily commutative.
They will also be associative, except in our final Section~\ref{S:MoreCtxts} where more contexts, including right near-rings and cancellative semirings, will be discussed.

Rickart~\cite{Rick1948}, Johnson~\cite{Johnson1958}, Martindale~\cite{Mart1969b} state sufficient conditions implying that a multiplicative homomorphism, between rings, is additive.
Related functional equations, such as
 \[
 f(x+y)+f(xy)=f(x)+f(y)+f(x)f(y)\,,
 \]
are studied in Dhombres~\cite{Dhombres1988}.
Ger and Reich~\cite{GerReich2010} handle functional equations of the form
 \[
 af(xy)+bf(x)f(y)+cf(x+y)+df(x)+kf(y)=0\,,
 \]
and study when a function satisfying such an identity must be a ring homomorphism.
Functional identities are studied further in Bre\v{s}ar~\cite{Bres2023}, Bre\v{s}ar, Chebotar, and Martindale~\cite{BCM2007}; see also Moln\'{a}r~\cite{Moln2000} for related work oriented towards operator algebras.

However, none of those works seems to involve the preservation of the successor function (i.e., $f(1+x)=1+f(x)$).
Anticipating Definition~\ref{D:Unamb}, we say that a ring~$R$ is \emph{addable} if every brachymorphism with domain~$R$ is additive.

A well studied example of successor-preserving map is the \emph{Frobenius map}\linebreak $\gf\colon x\mapsto\nobreak x^p$ on any algebra~$R$ over a field of characteristic~$p>0$: indeed\linebreak $(1+x)^p=1+x^p$ whenever $x\in R$.
On the other hand, the ring-theoretical identity $(xy)^p=x^py^p$ follows from commutativity but does not imply it unless $p=2$ (cf. Johnsen \emph{et al.} \cite[Example~3]{JOY1968b}).
Riley proves in~\cite{Riley2017} that \emph{if~$R$ is Engelian \pup{definition recalled in Section~\textup{\ref{S:AddComm2Summ}}}, then some power of~$\gf$ is a ring homomorphism}.
Our Corollary~\ref{C:(x+1)^n} implies immediately that \emph{a power of~$\gf$ is a ring homomorphism if{f} it is a multiplicative homomorphism}.
We also verify (Corollary~\ref{C:Engel}) that \emph{every Engelian ring is addable}.

As far as we could check, the investigation of the definability of addition in terms of other operations including the successor $x\mapsto 1+x$ started in Foster~\cite{Fos1951b}.
In~\cite{Yaq1956}, Yaqub expresses the addition of any ring $\ZZ/n\ZZ$ as a composition of multiplication and the successor function (anticipating Definition~\ref{D:brachynomials}, we call such operations \emph{brachynomials}).
Moore and Yaqub~\cite{MooYaq1978} extend that result to any ring satisfying a polynomial identity of the form $x^n=x^{n+1}p(x)$ in which every idempotent is central.
Those results are further refined in Abu-Khuzam,Tominaga, and Yaqub~\cite{AKTY1980}.
The assumption that every idempotent is central is removed in Putcha and Yaqub~\cite{PutYaq1985}.
The result is extended to commutative rings in which every element is a sum of idempotents in Yaqub~\cite{Yaqub1981}.
Finally, Komatsu~\cite{Koma1982} proves that \emph{rings in which the addition is a composition of multiplication and the successor function are characterized by their satisfying a polynomial identity of the form $x^n=x^{n+1}p(x)$, and then the latter can be taken as $x^n=x^{2n}$ for a suitable~$n$}.

All the above-mentioned works establish that in certain classes of rings~$R$, the addition can be expressed as a brachynomial.
Although this implies (non-trivially!) that every brachymorphism from~$R$ is additive (Corollary~\ref{C:Komatsu}), the converse does not hold: for example, $\ZZ$ does not satisfy any identity of the form $x^n=x^{2n}$, nonetheless every brachymorphism from~$\ZZ$ is (trivially) additive.
The distinction in play boils down to the meaning of \emph{definable}: in all the above-mentioned works, ``definable'' functions are meant in the strongest possible sense (i.e., brachynomials), which leads to the description of addition \emph{via} \emph{quantifer-free formulas}.
By contrast, the statement that every brachymorphism from~$R$ is additive can be expressed in terms of certain \emph{positive existential formulas} that we call \emph{summability formulas} (Lemma~\ref{L:CharAdd}).
We are therefore reaching a much wider class of rings than the ones characterized by Komatsu as above --- in fact, so wide that we do not know so far whether there is anything outside.

\subsection{Addable rings and elements}\label{Su:AddRngElt}
As stated in the Abstract, the main aim of the paper is to establish a sample of addability results for large classes of rings, such as $\pi$-regular rings, Banach algebras, and commutative rings.
A ubiquitous byproduct of our discussion will be the concept of an \emph{addable element}.
An element~$x$ in a ring~$R$ is \emph{addable} if $f(x+y)=f(x)+f(y)$ whenever $y\in R$ and~$f$ is a brachymorphism with domain~$R$ (Definition~\ref{D:Unamb}).
We will verify that the set~$\Add{R}$ of all addable elements in~$R$ is an additive subgroup of~$R$ (Proposition~\ref{P:S0S1}), closed under multiplication by regular elements (Corollary~\ref{C:AddxReg}), containing the Jacobson radical of~$R$ (Corollary~\ref{C:JacRad}), the center of~$R$ (Corollary~\ref{C:pq2Add}), and all nilpotent elements of~$R$ (Corollary~\ref{C:x^n2xAdd}).
We will further verify that~$\Add{R}$ is right and left integrally closed in~$R$ (Theorem~\ref{T:IntClos}), and that if $xy-yx$ is addable for all $y\in R$ then~$x$ is addable (Corollary~\ref{C:z=x+y}).
As an illustration, we will verify the addability of certain monoid rings~$K[M]$, including the case where~$M$ is $\pi$-regular (Theorem~\ref{T:KMreg}), and of the Weyl algebra~$\vA_1(K)$ in case~$K$ has positive characteristic (Theorem~\ref{T:Weyl1}), where~$K$ denotes a commutative ring.

\section{The case of division rings: addition on a slide rule}
\label{S:DivRings}

Warming up to the context, we begin with a very easy observation.

\begin{proposition}\label{P:vrinDivRing}
Let~$R$ be a division ring and let~$S$ be a unital ring.
Then every brachymorphism $f\colon R\to S$ is additive.
\end{proposition}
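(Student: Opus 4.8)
The plan is to use the two defining identities of a brachymorphism to convert a sum into a product, exploiting the invertibility available in a division ring. This is the ``addition on a slide rule'' idea announced in the section title, run backwards: a slide rule turns products into sums \emph{via} logarithms, whereas here the successor identity $f(1+x)=1+f(x)$ is exactly what lets us recover a sum from a product.

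First I would recall the two facts established in the Introduction, namely $f(0)=0$ and $f(1)=1$. The first of these disposes of the degenerate case: if $x=0$, then $f(x+y)=f(y)=f(0)+f(y)=f(x)+f(y)$, so additivity holds at $x$.

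The main step treats $x\neq0$. Since $R$ is a division ring, $x$ is invertible, so $x+y=x\cdot(1+x^{-1}y)$. Now apply, in order: multiplicativity, to get $f(x+y)=f(x)\cdot f(1+x^{-1}y)$; the successor identity, to get $f(1+x^{-1}y)=1+f(x^{-1}y)$; and multiplicativity once more, to get $f(x)\cdot f(x^{-1}y)=f\pI{x\cdot(x^{-1}y)}=f(y)$. Chaining these together yields $f(x+y)=f(x)\cdot\pI{1+f(x^{-1}y)}=f(x)+f(x)\cdot f(x^{-1}y)=f(x)+f(y)$, which is additivity at~$x$.

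There is no real obstacle here: the statement is a warm-up. The only point worth flagging is that the entire argument hinges on the factorization $x+y=x\cdot(1+x^{-1}y)$, so the division-ring hypothesis is used only through the invertibility of~$x$; in fact it would suffice that $x$ alone be invertible (a regular element), which foreshadows the later, finer analysis of which individual elements of an arbitrary ring are addable and the fact that the set of addable elements is closed under multiplication by regular elements.
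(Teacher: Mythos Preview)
Your proof is correct and follows essentially the same approach as the paper: factor the sum through a successor and apply the two brachymorphism identities. The only cosmetic difference is that you factor on the left, $x+y=x(1+x^{-1}y)$, while the paper factors on the right, $x+y=(1+yx^{-1})x$; both work for the same reason.
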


\begin{proof}
Let $x,y\in R$ and set $z\eqdef x+y$; we must prove that $f(z)=f(x)+f(y)$.
If $x=0$ then this follows from the observation above that $f(0)=0$; thus suppose that $x\neq0$.
{F}rom $zx^{-1}=1+yx^{-1}$ it follows that $f(z)f(x)^{-1}=1+f(y)f(x)^{-1}$, whence $f(z)=f(x)+f(y)$.
\end{proof}

Proposition~\ref{P:vrinDivRing}, though trivial, is quite instructive, in that it showcases the method, familiar to math students from the pre-calculator era, for adding numbers on a slide rule: namely, apply the formula $x+y=(1+yx^{-1})x$.
A number of our ``brachymorphism $\Rightarrow$ additive'' results will be established by refinements of that argument, with various degrees of sophistication.

\section{Summability and addability: a model-theoretical take}
\label{S:SummAdd}

A central concept underlying our framework is the following.

\begin{definition}\label{D:Unamb}
A finite tuple $(a_1,\dots,a_n)$ of elements in a ring~$R$ is \emph{summable} if for every ring~$S$ and every brachymorphism $f\colon R\to S$, $f\pI{\sum_{i=1}^na_i}=\sum_{i=1}^nf(a_i)$.
An element~$a$ of~$R$ is \emph{addable} if the pair $(a,x)$ is summable for all $x\in R$.
We define the \emph{addable kernel} of~$R$ as the set~$\Add{R}$ of all addable elements of~$R$.
A subset~$X$ of~$R$ is addable if $X\subseteq\Add{R}$.
In particular, the ring~$R$ is addable if{f} $R=\Add{R}$.
\end{definition}

In particular, $0$, $1$, and more generally all integer multiples of the unit, are addable.
(Formally this follows from the forthcoming Proposition~\ref{P:S0S1}; however, checking it directly is a straightforward exercise.)

This section will be centered on Lemma~\ref{L:CharAdd}, which expresses summability in terms of satisfaction of certain positive existential formulas.
The main first-order vocabularies used throughout the paper are the following:
\begin{itemize}
\item
$\cR=(+,\cdot,0,1)$, with~$+$ (the ``addition'') and~$\cdot$ (the ``multiplication'') both binary operation symbols and~$0$, $1$ symbols of constant: the language of (unital) rings;

\item
$\cS=(',\cdot,0)$, with~$'$ a unary operation symbol, $\cdot$ a binary operation symbol, and~$0$ a symbol of constant.
\end{itemize}
In any ring, the symbols of~$\cR$ will be interpreted the usual way, while the operation~$'$ will be interpreted by the successor function $x\mapsto x'=1+x$.
In particular, for any rings~$R$ and~$S$, the $\cR$-homomorphisms from~$R$ to~$S$ are exactly the ring homomorphisms, while the $\cS$-homomorphisms from~$R$ to~$S$ are exactly the brachymorphisms.

Something we will find more convenient to work with than $\cR$-terms (i.e., compositions of~$+$, $\cdot$, $0$, $1$) are their associated (non-commutative) polynomials: for example, $x+y$ and~$y+x$ are not the same $\cR$-term, but they define the same polynomial.
Every $\cS$-term~$t$ can be interpreted as a polynomial~$\tilde{t}$, and every $\cS$-formula~$\vE$ can be interpreted as an $\cR$-formula~$\tilde{\vE}$:
formally, $\tilde{0}=0$, $\tilde{t'}=1+\tilde{t}$ (\emph{accordingly, let us from now on abbreviate the~$\cS$-term~$0'$ by~$1$}), and $\widetilde{t_1t_2}=\tilde{t}_1\tilde{t}_2$, whenever~$t$, $t_1$, $t_2$ are $\cS$-terms.
If~$\vE$ is $t_1=t_2$, then~$\tilde{\vE}$ is $\tilde{t}_1=\tilde{t}_2$.
In particular, for any ring~$R$ and any $\cS$-sentence~$\vE$ with parameters from~$R$, $(R,+,\cdot,0,1)$ satisfies~$\tilde{\vE}$ if{f} $(R,',\cdot,0)$ satisfies~$\vE$.

\begin{definition}\label{D:brachynomials}
Polynomials of the form~$\tilde{t}$ will be called \emph{brachynomials}.
\end{definition}

For example, for distinct variables~$x$ and~$y$, $x+xy=x(1+y)$ is a brachynomial (it is~$\tilde{t}$ where~$t\eqdef xy'$) but~$x+y$ is not (exercise).

We will be using the vector notation $\vec{x}=(x_1,\dots,x_n)$ for strings of variables.
Let us recall a classical definition of model theory.

\begin{definition}\label{D:Atpppe}
A formula, in a first-order language without relation symbols (e.g., $\cR$ and~$\cS$), is
\begin{itemize}
\item
\emph{atomic} if it has the form $s=t$ for terms~$s$ and~$t$;

\item
\emph{positive primitive} if it has the form $(\exists\vec{x})\vE$ where~$\vE$ is a conjunction of atomic formulas;

\item
\emph{positive existential} if it has the form $(\exists\vec{x})\vE$ where~$\vE$ is a disjunction of conjunctions of atomic formulas.
\end{itemize}
\end{definition}

The proof of the following lemma is a standard argument of elementary model theory;
we include it for completeness.
The reader unfamiliar with the language of model theory may skip the proof of the ``only if'' direction in Lemma~\ref{L:CharAdd}, as that direction will not be needed through the remainder of the paper.

\begin{lemma}\label{L:CharAdd}
A finite tuple $(a_1,\dots,a_n)$ of elements in a ring~$R$ is summable if{f} there exists a positive existential $\cS$-formula~$\vE(x_1,\dots,x_n,x_{n+1})$ such that:
\begin{enumerater}
\item\label{SuffSum}
Every ring satisfies the implication%
\footnote{
By convention, the universal quantifiers are skipped: the formula is meant to begin with~$\forall\vec{x}$.
}
$\tilde{\vE}(\vec{x})\Rightarrow x_{n+1}=x_1+\cdots+x_n$;

\item\label{NecessSum}
$R$ satisfies $\tilde{\vE}(a_1,\dots,a_n,a_1+\cdots+a_n)$.
\end{enumerater}
Furthermore, $\vE$ can be taken positive primitive.
\end{lemma}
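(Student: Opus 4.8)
The plan is to prove the two directions separately, using the standard compactness-free argument for the ``only if'' direction and a direct construction for the ``if'' direction.

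\medskip

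For the ``if'' direction, suppose such a $\vE$ exists. Let $f\colon R\to S$ be any brachymorphism. Since $R$ satisfies $\tilde{\vE}(a_1,\dots,a_n,a_1+\cdots+a_n)$, and $\vE$ is positive existential, we may fix witnesses in~$R$ for its existential quantifiers, so that $(R,',\cdot,0)$ satisfies the corresponding conjunction of atomic $\cS$-formulas with parameters from~$R$. A brachymorphism is precisely an $\cS$-homomorphism, and $\cS$-homomorphisms preserve satisfaction of atomic $\cS$-formulas (hence of conjunctions of them, hence of positive existential $\cS$-formulas). Therefore $(S,',\cdot,0)$ satisfies $\vE\pI{f(a_1),\dots,f(a_n),f(a_1+\cdots+a_n)}$, i.e.\ $S$ satisfies $\tilde{\vE}\pI{f(a_1),\dots,f(a_n),f(a_1+\cdots+a_n)}$. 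Applying clause~\eqref{SuffSum} to the ring~$S$ with the tuple $\pI{f(a_1),\dots,f(a_n),f(a_1+\cdots+a_n)}$ yields $f(a_1+\cdots+a_n)=f(a_1)+\cdots+f(a_n)$. As $f$ was arbitrary, $(a_1,\dots,a_n)$ is summable.

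\medskip

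For the ``only if'' direction, assume $(a_1,\dots,a_n)$ is summable; I must manufacture a suitable~$\vE$. The natural candidate comes from the free object: let $\bbf\colon\Fi{\vx_1,\dots,\vx_n}\to R$ be the unique $\cS$-homomorphism (brachymorphism) from the free $\cS$-algebra (equivalently, the ring freely generated over~$\ZZ$ by the successor structure --- concretely the polynomial ring $\ZZ[\vx_1,\dots,\vx_n]$ viewed as an $\cS$-algebra via $t\mapsto 1+t$) sending $\vx_i\mapsto a_i$. Let~$\fI$ be the kernel of the induced $\cR$-homomorphism $\ZZ[\vx_1,\dots,\vx_n]\to R$; since this is a \emph{polynomial} ring over~$\ZZ$, choose finitely many generators $p_1,\dots,p_m$ of the relevant relations, i.e.\ witnessing that $\sum_i a_i$ (and the needed products/successors) collapse correctly --- more precisely, pick finitely many pairs $(u_j,v_j)$ of $\cS$-terms in variables $\vx_1,\dots,\vx_n$ such that $R\models \tilde u_j(\vec a)=\tilde v_j(\vec a)$ and such that these relations, together with the ring axioms, \emph{force} $\vx_1+\cdots+\vx_n$ to equal the $\cS$-term $w$ whose value at~$\vec a$ is $\sum a_i$. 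The existence of such a finite set is where the freeness/finite-generation of $\ZZ[\vx_1,\dots,\vx_n]$ is used: the single equational consequence ``$\vx_1+\cdots+\vx_n = \tilde w$'' that holds in the quotient by~$\fI$ must already follow from finitely many generators of~$\fI$. Then set
\[
\vE(x_1,\dots,x_{n+1}) \;\eqdef\; (\exists \vec y)\,\conj_{j=1}^m\, u_j'(\vec x,\vec y)=v_j'(\vec x,\vec y)\,,
\]
where the $y$'s and the primed terms encode the chosen witnesses; clause~\eqref{NecessSum} holds by construction (take $\vec y$ to be the images under~$\bbf$), and clause~\eqref{SuffSum} holds because in any ring the listed atomic relations entail $x_{n+1}=x_1+\cdots+x_n$ by the same purely equational deduction. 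Note $\vE$ is a conjunction of atomic formulas preceded by $\exists$, hence positive primitive, which also proves the ``Furthermore'' clause.

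\medskip

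The main obstacle is the ``only if'' direction, specifically the passage from ``$(a_1,\dots,a_n)$ is summable'' to ``finitely many equational relations over $R$ already force the sum''. The key move is to identify summability of $(a_1,\dots,a_n)$ with a statement about the \emph{free} $\cS$-algebra (equivalently the brachymorphism out of $\ZZ[\vx_1,\dots,\vx_n]$ to $R$): summability says exactly that the relation $\vx_1+\cdots+\vx_n=\tilde w$ holds in the image quotient, which --- because we are quotienting a finitely generated ring by an ideal --- is a consequence of finitely many of the defining relations. I would spell out this ``finitely many relations suffice'' step carefully (it is a compactness-style, but purely syntactic, argument: an equation valid modulo an ideal is valid modulo a finitely generated subideal), as it is the one genuinely non-formal point; everything else is the routine bookkeeping of translating between $\cS$-terms, brachynomials, and $\cR$-formulas already set up before the lemma.
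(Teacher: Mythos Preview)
Your ``if'' direction is correct and essentially identical to the paper's.

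Your ``only if'' direction, however, does not go through; there are several independent problems.

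\textbf{(a) Wrong free object.} The free (unital, associative) ring on $\vx_1,\dots,\vx_n$ is the non-commutative $\ZZ\seq{\vx_1,\dots,\vx_n}$, not the commutative polynomial ring $\ZZ[\vx_1,\dots,\vx_n]$. Your appeal to Noetherianity (``this is a polynomial ring over~$\ZZ$'') thus fails: $\ZZ\seq{\vx_1,\dots,\vx_n}$ is not Noetherian for $n\geq 2$. Also, the ``free $\cS$-algebra'' is just a term algebra, not a ring; conflating it with the free ring is not harmless here.

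\textbf{(b) The $\cS$-term $w$ need not exist.} You posit ``the $\cS$-term $w$ whose value at $\vec a$ is $\sum a_i$''. There is in general no such term: $x_1+\cdots+x_n$ is not a brachynomial, and there is no reason $\sum a_i$ lies in the $\cS$-subalgebra of~$R$ generated by the $a_i$. This is exactly why the formula~$\vE$ must carry an extra free variable $x_{n+1}$ standing for the sum, rather than naming the sum by a term.

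\textbf{(c) Summability is not determined by the subring generated by the~$a_i$.} Your kernel $\fI$ records only relations among $\cS$-terms in $a_1,\dots,a_n$. But summability in~$R$ is a constraint on brachymorphisms defined on \emph{all} of~$R$; the witnesses needed for~$\vE$ may well be elements of~$R$ outside the subring generated by the~$a_i$. Nothing in your construction explains why relations among the $a_i$ alone should force $f(\sum a_i)=\sum f(a_i)$ for an arbitrary brachymorphism on~$R$.

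The paper's argument avoids all of this by introducing constants $\dot b$ for \emph{every} $b\in R$, so that models of the resulting theory~$\cT$ are exactly brachymorphisms with domain~$R$; summability then reads $\cT\vdash \dot a_{n+1}=\dot a_1+\cdots+\dot a_n$, and the compactness theorem extracts a finite subset $B\subseteq R$ and finitely many atomic $\cS$-relations among its elements. Existentially quantifying over the $y_b$ ($b\in B$) gives the positive primitive~$\vE$. If you want a compactness-free version, the right move is to form the ring~$S$ presented by generators $\dot r$ ($r\in R$) and relations $\dot 0=0$, $\dot{(1+r)}=1+\dot r$, $\dot{(rs)}=\dot r\,\dot s$; then summability is the single equation $\dot a_{n+1}=\sum_i\dot a_i$ in~$S$, and membership of $\dot a_{n+1}-\sum_i\dot a_i$ in the defining ideal is witnessed by finitely many generators. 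That is the algebraic shadow of the paper's argument --- but note it still requires constants for elements of~$R$ beyond the~$a_i$.
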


A formula~$\vE$ satisfying~\eqref{SuffSum}, \eqref{NecessSum} above will thus be called a \emph{summability formula} for $(a_1,\dots,a_n)$ (in~$R$).

\begin{proof}
Set $a_{n+1}\eqdef a_1+\cdots+a_n$ and suppose first that there exists a formula~$\vE$ as stated.
Let $f\colon R\to S$ be a brachymorphism between rings.
Since~$\vE$ is positive existential and~$f$ is a brachymorphism, it follows from~\eqref{NecessSum} that~$S$ satisfies $\tilde{\vE}(f(a_1),\dots,f(a_n),f(a_{n+1}))$, which, by~\eqref{SuffSum}, entails $f(a_{n+1})=f(a_1)+\cdots+f(a_n)$.

Suppose, conversely, that $(a_1,\dots,a_n)$ is summable within~$R$.
Denote by~$\cL_R$ the vocabulary obtained by adjoining to~$\cR$ a collection of extra symbols of constant~$\dot{b}$ for $b\in R$, and by~$\setm{\vA_i}{i\in I}$ the collection of all atomic $\cS$-formulas such that~$R$ satisfies $\tilde{\vA}_i\vecm{b}{b\in R}$.

Denote by~$\cT$ the set of~$\cL_R$-sentences obtaining by adjoining, to the axioms of ring theory, all atomic $\cR$-formulas~$\tilde{\vA}_i\vecm{\dot{b}}{b\in R}$ for $i\in I$.
Thus the models of~$\cT$ are in one-to-one correspondence with the brachymorphisms with domain~$R$ (interpret each~$\dot{b}$ by~$f(b)$).
Our assumption that $(a_1,\dots,a_n)$ is summable with sum~$a_{n+1}$ thus means exactly that~$\cT\vdash\dot{a}_{n+1}=\dot{a}_1+\cdots+\dot{a}_n$ (where~$\vdash$ denotes entailment).
By the compactness Theorem, there are a finite subset~$J$ of~$I$ and a finite subset~$B$ of~$R$ containing $\set{a_1,\dots,a_n,a_{n+1}}$, such that, denoting  conjunction over a set of formulas by~$\conj$,
 \begin{equation}\label{Eq:brachyimpl}
 (\text{ring theory})+
 \conj_{j\in J}\tilde{\vA}_j\vecm{\dot{b}}{b\in B}\ \vdash\ 
 \dot{a}_{n+1}=\dot{a}_1+\cdots+\dot{a}_n\,.
 \end{equation}
Setting $\vec{y}\eqdef\vecm{y_b}{b\in B}$, the formula~$\vE(x_1,\dots,x_n,x_{n+1})$ defined as
 \[
 (\exists\vec{y})\pII{\conj_{1\leq i\leq n+1}
 (x_i=y_{a_i})\ \&\ 
 \conj_{j\in J}\vA_j\vecm{y_b}{b\in B}}
 \]
is positive primitive, and by~\eqref{Eq:brachyimpl},
 \[
 (\text{ring theory})+\tilde{\vE}(\vec{x})\ \vdash\ x_{n+1}
 =x_1+\cdots+x_n\,.
 \]
Moreover, by the definition of the~$\vA_i$, $R$ satisfies $\tilde{\vE}(a_1,\dots,a_n,a_{n+1})$ (interpret each~$y_b$ by~$b$).
\end{proof}

\begin{example}\label{Ex:SumFlaDivRing}
The summability formula~$\vE(x,y,z)$ underlying the proof of Proposition~\ref{P:vrinDivRing} is $(x=0\ \&\ z=y)\text{ or }(\exists u)(ux=1\ \&\ zu=(yu)')$.
Indeed, modulo ring theory, $\tilde{\vE}(x,y,z)$ is equivalent to
 \[
 (x=0\ \&\ z=y)\text{ or }(\exists u)(ux=1\ \&\ zu=1+yu)\,,
 \]
which entails $z=x+y$.
Conversely, in any division ring, if $z=x+y$, then $\tilde{\vE}(x,y,z)$ is satisfied (if $x\neq0$ set $u\eqdef x^{-1}$).

The formula~$\vE$ above is positive existential, but not positive primitive: this is because it is designed to take all possible $(x,y)$ into account, which leads us to separate the cases $x=0$ and $x\neq0$.
%There are various ways to overcome that annoyance, which we will encounter in the forthcoming sections.
\end{example}

The observations above yield the following reformulation of our main conjecture.

\begin{corollary}\label{C:MainConj}
The following statements are equivalent:
\begin{enumeratei}
\item\label{MainConj}
Every brachymorphism between rings is additive.

\item\label{MainConjZZ}
The pair $(x,y)$ is summable in $\ZZ\seq{x,y}$ \pup{the free ring on two generators}.

\item\label{MainConjLogic}
There exists a positive existential \pup{resp., positive primitive} $\cS$-formula~$\vE(x,y,z)$ such that every ring satisfies the equivalence $\tilde{\vE}(x,y,z)\Leftrightarrow z=x+y$.

\end{enumeratei}
\end{corollary}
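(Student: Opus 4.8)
The plan is to prove the cycle $\pup{\ref{MainConj}}\Rightarrow\pup{\ref{MainConjZZ}}\Rightarrow\pup{\ref{MainConjLogic}}\Rightarrow\pup{\ref{MainConj}}$. In the step producing~\pup{\ref{MainConjLogic}} I would produce the \emph{positive primitive} witness; since every positive primitive formula is positive existential, and since the positive existential form of~\pup{\ref{MainConjLogic}} is all that is needed to re-derive~\pup{\ref{MainConj}}, this single cycle settles the parenthetical ``resp.'' clause too. The first implication is trivial: applying Definition~\ref{D:Unamb} with $R=\ZZ\seq{x,y}$, the hypothesis that every brachymorphism is additive specializes to $f(x+y)=f(x)+f(y)$ for every brachymorphism~$f$ out of $\ZZ\seq{x,y}$, which is exactly summability of $(x,y)$ there.

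For $\pup{\ref{MainConjZZ}}\Rightarrow\pup{\ref{MainConjLogic}}$ I would feed the summable pair $(x,y)$ of $\ZZ\seq{x,y}$ into Lemma~\ref{L:CharAdd}, obtaining a positive primitive $\cS$-formula $\vE(x_1,x_2,x_3)$ with $\tilde\vE(\vec x)\Rightarrow x_3=x_1+x_2$ valid in every ring, and with $\ZZ\seq{x,y}\models\tilde\vE(x,y,x+y)$. The remaining task --- and the only point in the whole argument requiring any care --- is to upgrade this last satisfaction, which takes place at one special point of one special ring, into the universally quantified converse implication $z=x+y\Rightarrow\tilde\vE(x,y,z)$ in an \emph{arbitrary} ring~$R$. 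Here I would invoke the two facts that make the free ring do its job: given $a,b\in R$ there is a ring homomorphism $h\colon\ZZ\seq{x,y}\to R$ sending $x\mapsto a$ and $y\mapsto b$, hence $x+y\mapsto a+b$; and positive existential formulas are preserved under homomorphisms, so $\ZZ\seq{x,y}\models\tilde\vE(x,y,x+y)$ forces $R\models\tilde\vE(a,b,a+b)$. Combined with clause~\eqref{SuffSum} of Lemma~\ref{L:CharAdd}, this gives $\tilde\vE(x,y,z)\Leftrightarrow z=x+y$ in~$R$.

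Finally, for $\pup{\ref{MainConjLogic}}\Rightarrow\pup{\ref{MainConj}}$ I would run the same preservation argument on the $\cS$-side. Given a brachymorphism $f\colon R\to S$ and $a,b\in R$: the equivalence in~$R$ gives $(R,+,\cdot,0,1)\models\tilde\vE(a,b,a+b)$, equivalently $(R,',\cdot,0)\models\vE(a,b,a+b)$; since~$f$ is an $\cS$-homomorphism and~$\vE$ is positive existential, $(S,',\cdot,0)\models\vE(f(a),f(b),f(a+b))$, equivalently $(S,+,\cdot,0,1)\models\tilde\vE(f(a),f(b),f(a+b))$; and the equivalence in~$S$ then reads $f(a+b)=f(a)+f(b)$. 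So~$f$ is additive, closing the cycle. No genuinely hard step is expected: the proof is a repackaging of Lemma~\ref{L:CharAdd} together with the universal mapping property of $\ZZ\seq{x,y}$ and the homomorphism-preservation of positive existential formulas (used once for $\cR$-homomorphisms, once for $\cS$-homomorphisms), the mild subtlety being the genericity passage in the middle implication.
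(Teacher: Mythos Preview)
Your proof is correct and follows essentially the same route as the paper's. The only difference is cosmetic: where the paper compresses the step ``$\ZZ\seq{x,y}\models\tilde\vE(x,y,x+y)$ is a positive existential sentence of ring theory, hence holds in every ring'' into one line, you unpack it via the universal property of the free ring and homomorphism-preservation of positive existential formulas; and where the paper dispatches \pup{\ref{MainConjLogic}}$\Rightarrow$\pup{\ref{MainConj}} by a pointer to ``the easy direction of Lemma~\ref{L:CharAdd}'', you re-derive that direction on the $\cS$-side explicitly.
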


\begin{proof}
\eqref{MainConj}$\Rightarrow$\eqref{MainConjZZ} is trivial.

\eqref{MainConjZZ}$\Rightarrow$\eqref{MainConjLogic}.
Suppose that~\eqref{MainConjZZ} holds.
Let a positive primitive $\cS$-formula $\vE(x,y,z)$ witness the summability of $(x,y)$ in~$\ZZ\seq{x,y}$ \emph{via} Lemma~\ref{L:CharAdd}.
Since $\ZZ\seq{x,y}$ satisfies the positive existential sentence $\tilde{\vE}(x,y,x+y)$ of ring theory, every ring satisfies $(\forall x,y)\tilde{\vE}(x,y,x+y)$.
Since every ring satisfies the implication $\tilde{\vE}(x,y,z)\Rightarrow z=x+y$, it satisfies the required equivalence.

\eqref{MainConjLogic}$\Rightarrow$\eqref{MainConj}.
If~\eqref{MainConjLogic} holds, then, by the easy direction of Lemma~\ref{L:CharAdd}, every pair of elements, in any ring, is addable.
\end{proof}

The formulation~\eqref{MainConjLogic}, in Corollary~\ref{C:MainConj}, implies that if the conjecture had a positive solution, then such a solution could be, in principle, accessible through number crunching.
However, in view of the size of the search space being at least a double exponential (in the degrees of the polynomials involved), that option looks quite unpractical \emph{a priori}.

\section{Basic preservation results for addable elements}\label{S:Reg2Add}

An element~$x$ in a monoid~$M$ is \emph{regular}\label{page:regular}
if $xtx=x$ for some~$t\in M$, then called a \emph{quasi-inverse} of~$x$.
Of course, every left or right unit of~$M$ is regular.
We shall use that concept mainly for multiplicative monoids of rings, except in Theorem~\ref{T:KMreg} where~$M$ will be a general monoid.

This section will be centered on Proposition~\ref{P:S0S1} (addable elements form an additive subgroup) and Corollary~\ref{C:AddxReg} (addable times regular implies addable).
The addability of many ``classical'' rings, including but not restricted to von~Neumann regular rings and Banach algebras, will be stated in Corollary~\ref{C:Reg2Add}.

\begin{lemma}\label{L:f(nx)}
Let~$R$ and~$S$ be rings and let $f\colon R\to S$ be a brachymorphism.
Then $f(nx)=nf(x)$ whenever $x\in R$ and $n\in\ZZ$.
\end{lemma}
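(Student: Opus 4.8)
The goal is to show $f(nx) = nf(x)$ for all $x \in R$ and $n \in \ZZ$, where $f$ is a brachymorphism. The plan is to proceed in stages: first handle $n \geq 0$ by induction, then extend to negative $n$ using the additivity we establish along the way at $-1$.

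\medskip

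For the inductive step on nonnegative integers, the key observation is the slide-rule trick specialized to adding $1$: from $x + 1 = (1 + x^{-1}) \cdot x$ one would want to read off additivity, but of course $x$ need not be invertible, so instead I would use the successor axiom directly. Since $f$ is a brachymorphism, $f(1 + y) = 1 + f(y)$ for all $y$. The natural route is to prove $f(x + n) = f(x) + n$ first (for $n \in \NNo$, by iterating the successor axiom $n$ times), which is the special case $x \mapsto x$, $y \mapsto n\cdot 1$. But to get the general $f(nx) = nf(x)$ one cannot simply iterate; one needs a genuine additivity input. The trick here is multiplicativity: observe that if $u$ is a (two-sided) unit then $nx = (1 + (n-1)) \cdot \ldots$ — hmm, this still does not factor. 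The cleaner approach: note that $n \cdot 1$ is central and a sum of $1$'s, and write $nx = x \cdot (n\cdot 1)$ is wrong in general too since $n\cdot 1$ commutes with $x$ so actually $nx = x(n\cdot 1) = (n\cdot 1)x$ \emph{is} valid. So $f(nx) = f(x \cdot (n\cdot 1)) = f(x) f(n\cdot 1)$ by multiplicativity, and the problem reduces to computing $f(n \cdot 1)$. Now $f(n\cdot 1)$: by iterating the successor axiom, $f(n\cdot 1) = f(1 + (1 + \cdots + 1)) = 1 + f((n-1)\cdot 1) = \cdots = n \cdot 1$ in $S$ (for $n \geq 0$). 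Therefore $f(nx) = f(x) \cdot (n \cdot 1_S) = n f(x)$, using that $n\cdot 1_S$ is central in $S$. This handles $n \geq 0$ cleanly.

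\medskip

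For negative $n$, it suffices to treat $n = -1$, i.e. to show $f(-x) = -f(x)$, and then apply the nonnegative case to $-x$. By the same factorization, $-x = x \cdot (-1 \cdot 1) = x \cdot (-1_R)$, so $f(-x) = f(x) f(-1_R)$, and we must show $f(-1_R) = -1_S$. From $0 = (-1) + 1$ in $R$ we get $-1_R = (-1_R) \cdot 1_R$, which is unhelpful directly; instead use $(-1_R)^2 = 1_R$, so $f(-1_R)^2 = f(1_R) = 1_S$, giving that $f(-1_R)$ is a square root of $1$ — not yet enough. The decisive step: $f(-1_R) = f(-1_R)$, and consider $f(0) = f((-1_R) + 1_R)$. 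We cannot yet use additivity. But note $-1_R = (1_R) + (-2 \cdot 1_R)$ won't help either. The right move is to use $1 + (-1_R) = 0$ rewritten via the successor: apply $f$ to $1 + (-1_R)$ — but $1 + (-1_R)$ is literally $(-1_R)'$ in the $\cS$-language, so $f((-1_R)') = 1 + f(-1_R)$, i.e. $f(0) = 1_S + f(-1_R)$, hence $f(-1_R) = f(0) - 1_S = -1_S$ since $f(0) = 0$ was already established in the introduction. That closes the case: $f(-x) = f(x)f(-1_R) = -f(x)$, and for general negative $n = -m$ with $m > 0$, $f(nx) = f(-(mx)) = -f(mx) = -m f(x) = n f(x)$.

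\medskip

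I expect the only subtle point to be the bookkeeping around \emph{which} quantities are central and the careful use of $f(0) = 0$ and $f(1) = 1$, both of which are already available from the introductory computation. No compactness or model theory is needed here — this is a direct elementary argument. The main "obstacle," if any, is resisting the temptation to invoke additivity (which is exactly what is being built up toward) and instead routing every additive fact through either the successor axiom applied to an element of the form $t'$, or through multiplicativity combined with centrality of integer multiples of the unit.
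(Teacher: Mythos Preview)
Your argument is correct and, once the exploratory false starts are stripped away, coincides with the paper's proof: factor $nx=(n\cdot1)x$ and use multiplicativity together with an iterated-successor computation of $f(n\cdot1)$ for $n\geq0$, then obtain $f(-1)=-1$ from $f(0)=f(1+(-1))=1+f(-1)$ and conclude $f(-x)=f(-1)f(x)=-f(x)$. The only cosmetic difference is that you multiply on the right by $n\cdot1$ while the paper multiplies on the left, which is immaterial since $n\cdot1$ is central (and in fact centrality is not even needed: $a\cdot(n\cdot1)=na$ holds in any ring by distributivity alone).
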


\begin{proof}
We already verified in the Introduction that $f(0)=0$, so it suffices to check the case where either $n>0$ or $n<0$.
For the first case, write $n=1+\cdots+1$ ($n$ times) and apply distributivity.
For the second case, it thus suffices to verify the case where $n=-1$.
{F}rom $0=1+(-1)$ and $f(0)=0$ it follows that $0=1+f(-1)$, thus $f(-1)=-1$, and thus $f(-x)=f((-1)x)=f(-1)f(x)=-f(x)$.
\end{proof}

\begin{proposition}\label{P:S0S1}
For every ring~$R$, $\Add{R}$ is an additive subgroup of~$R$.
\end{proposition}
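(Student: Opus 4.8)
The plan is to show that $\Add{R}$ is closed under the three operations that characterize an additive subgroup: it contains~$0$, it is closed under negation, and it is closed under addition. The first point is immediate (it was already noted after Definition~\ref{D:Unamb}, and follows from Lemma~\ref{L:f(nx)} applied with $n=0$ or directly from $f(0)=0$). For closure under negation, I would fix an addable~$x$ and an arbitrary $y\in R$, and aim to establish $f(-x+y)=f(-x)+f(y)$ for every brachymorphism~$f$. The trick is to reduce this to the known relation $f(x+y')=f(x)+f(y')$ for suitable $y'$: writing $y=(-x)+(x+y)$, addability of~$x$ gives $f(x+(x+y)') \ldots$ — more cleanly, set $z\eqdef -x+y$; then $y=x+z$, and since $x$ is addable the pair $(x,z)$ is summable, so $f(y)=f(x+z)=f(x)+f(z)$, whence $f(z)=f(y)-f(x)=f(y)+f(-x)$, using $f(-x)=-f(x)$ from Lemma~\ref{L:f(nx)}. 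This shows $f(-x+y)=f(-x)+f(y)$ for all $y$, i.e. $-x\in\Add{R}$.

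For closure under addition, let $x_1,x_2\in\Add{R}$ and $y\in R$; I must show $f(x_1+x_2+y)=f(x_1)+f(x_2)+f(y)$ for every brachymorphism~$f$. Apply addability of~$x_1$ to the element $x_2+y\in R$: this gives $f(x_1+(x_2+y))=f(x_1)+f(x_2+y)$. Then apply addability of~$x_2$ to the element~$y$: $f(x_2+y)=f(x_2)+f(y)$. Substituting yields $f(x_1+x_2+y)=f(x_1)+f(x_2)+f(y)$, which is exactly the statement that the pair $(x_1+x_2,y)$ is summable. Since~$y$ was arbitrary, $x_1+x_2\in\Add{R}$. Together with closure under negation and $0\in\Add{R}$, this proves that $\Add{R}$ is an additive subgroup of~$R$.

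I do not expect any genuine obstacle here: the entire argument is a bookkeeping exercise in unwinding Definition~\ref{D:Unamb} (addability of~$x$ means $(x,y)$ is summable for all~$y$, i.e. $f(x+y)=f(x)+f(y)$ for all brachymorphisms~$f$ and all~$y$), combined with the single external input $f(-x)=-f(x)$ from Lemma~\ref{L:f(nx)}. The one point worth stating carefully is the order of the two applications in the closure-under-addition step: one peels off~$x_1$ first (against the composite argument $x_2+y$), then peels off~$x_2$ (against~$y$); trying to do it in the wrong order, or trying to invoke summability of a triple directly, would require knowing more than the definition supplies. So the proof is short, and the main thing to be careful about is simply not to assume anything about sums of more than two addable elements before it has been derived.
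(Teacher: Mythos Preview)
Your proof is correct and follows essentially the same route as the paper: the paper dismisses $0\in\Add{R}$ and closure under addition as obvious (you spell the latter out), and for closure under negation both arguments apply addability of~$x$ to the pair $(x,-x+y)$ and then invoke Lemma~\ref{L:f(nx)} to rewrite $-f(x)$ as $f(-x)$. One very minor point: when you conclude that $f(x_1+x_2+y)=f(x_1)+f(x_2)+f(y)$ ``is exactly the statement that the pair $(x_1+x_2,y)$ is summable,'' you are implicitly using $f(x_1+x_2)=f(x_1)+f(x_2)$, which follows at once from addability of~$x_1$ applied to~$x_2$; this is harmless but worth making explicit.
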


\begin{proof}
It is obvious that~$0$ is addable and that the sum of any two addable elements is addable.
Hence it suffices to verify that~$-x$ is addable whenever~$x$ is addable.
Let~$f$ be a brachymorphism with domain~$R$.
Since~$x$ is addable, the equality $f(x)+f(-x+y)=f(y)$ holds whenever $y\in R$; whence, using Lemma~\ref{L:f(nx)},\newline $f(-x+y)=-f(x)+f(y)=f(-x)+f(y)$.
\end{proof}

\begin{lemma}\label{L:S1}
Let~$x$ and~$y$ be elements in a ring~$R$.
If the element~$x+y$ is addable, then the pair $(x,y)$ is summable.
\end{lemma}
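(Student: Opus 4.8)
The plan is to reduce the summability of $(x,y)$ to a single application of the addability of $z\eqdef x+y$, combined with Lemma~\ref{L:f(nx)} to take care of the sign. Fix a ring~$S$ and a brachymorphism $f\colon R\to S$; by Definition~\ref{D:Unamb} it suffices to prove that $f(x+y)=f(x)+f(y)$.

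First I would apply the hypothesis that~$z$ is addable, \emph{not} to an arbitrary element but to the specific choice $w\eqdef-x\in R$: this says that the pair $(z,-x)$ is summable, hence $f(z+(-x))=f(z)+f(-x)$. Since~$R$ is a ring, $z+(-x)=(x+y)+(-x)=y$, so the previous equality reads $f(y)=f(z)+f(-x)$.

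Next I would invoke Lemma~\ref{L:f(nx)} with $n=-1$ to get $f(-x)=-f(x)$ (this step cannot be skipped: a brachymorphism is only assumed to preserve the successor and the multiplication, so $f(-x)=-f(x)$ genuinely requires the lemma). Substituting this into $f(y)=f(z)+f(-x)$ yields $f(y)=f(z)-f(x)$, that is, $f(z)=f(x)+f(y)$, i.e., $f(x+y)=f(x)+f(y)$. As~$S$ and~$f$ were arbitrary, $(x,y)$ is summable.

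There is no real obstacle here; the only point to watch is that the addability of~$z$ is a statement quantified over all elements of~$R$ and all brachymorphisms with domain~$R$, so it applies in particular to the element~$-x$ together with the fixed brachymorphism~$f$, and that the identity $f(-x)=-f(x)$ is supplied by Lemma~\ref{L:f(nx)} rather than taken for granted.
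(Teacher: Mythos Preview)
Your proof is correct and follows essentially the same approach as the paper's: apply the addability of $x+y$ to a negated summand and use Lemma~\ref{L:f(nx)} for $f(-\,\cdot)=-f(\cdot)$. The only cosmetic difference is that you pair $x+y$ with $-x$ to isolate~$f(y)$, whereas the paper pairs $x+y$ with $-y$ to isolate~$f(x)$; the argument is otherwise identical.
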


\begin{proof}
We compute, using the addability of~$x+y$ together with Lemma~\ref{L:f(nx)},
\begin{equation*}
 f(x) + f(y) = f((x+y)+(-y)) + f(y) = f(x+y) + f(-y) + f(y) = f(x+y)\,.
 \tag*{\qed}
 \end{equation*}
 \renewcommand{\qed}{}
 \end{proof}

Although it is trivial that the product of an addable element by a unit is addable, the more general statement, whether the product of two addable elements is addable, is unclear.
In that direction, we could only achieve the following observation.

\begin{proposition}\label{P:xyv2xuv}
Let~$x$, $y$, $u$, $v$ be elements in a ring~$R$ such that $u=uvu$.
If $(x,yv)$ is summable, then $(xu,y)$ is summable.
\end{proposition}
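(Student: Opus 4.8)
The plan is to reduce the summability of $(xu,y)$ to that of $(x,yv)$ by a slide-rule–style substitution, exploiting the relation $u=uvu$. The starting algebraic identity is
\[
(xu)+y = \pI{x+yv}u + \pI{y-yvu}\,,
\]
which holds in any ring since $(yv)u = yvu$ and $xu + yvu + (y-yvu) = xu+y$. Here the second summand $y-yvu = y(1-vu)$ will turn out to behave like a nilpotent-type correction term: since $u=uvu$, the element $e\eqdef vu$ need not be idempotent, but $ue = uvu = u$, so $u(1-e)=0$, hence $(y-yvu)u = yu - yvuu$ — this is not quite what we want, so instead I would track the element $w\eqdef y-yvu$ directly and note $wu = yu - yvu\cdot u$, which is not obviously zero. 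The cleaner route is to observe that $yv$ multiplied on the right by $u$ recovers $yvu$, and that $y = yvu + (y-yvu)$, so applying a brachymorphism $f$ we want to show $f(xu+y) = f(xu)+f(y)$ using only that $f(x+yv)=f(x)+f(yv)$.

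So, concretely: let $f\colon R\to S$ be a brachymorphism. From summability of $(x,yv)$ we get $f(x+yv) = f(x)+f(yv)$. Multiply by $f(u)$ on the right: $f\pI{(x+yv)u} = f\pI{(x+yv)}f(u) = f(x)f(u) + f(yv)f(u) = f(xu) + f(yvu)$, using multiplicativity of $f$ throughout. Now $(x+yv)u = xu + yvu$, and I need to relate $f(xu+yvu)$ to $f(xu+y)$. The key remaining point is that $y$ and $yvu$ differ by $y(1-vu)$, and $y(1-vu)$ should be shown addable — or at least that the pair $(xu+yvu,\ y(1-vu))$ is summable — via the relation $u=uvu$. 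Indeed $\pI{y(1-vu)}\cdot u = yu - yvu\,u$; hmm, to make this vanish I would use the \emph{left} form: the hypothesis $u=uvu$ gives $(1-uv)u = 0$, so the natural correction term to peel off is on the left of $u$, suggesting instead the decomposition $(xu)+y$ via $x+yv$ with the correction $(1-uv)$ acting appropriately — i.e. one should symmetrically also use $u = u\cdot v\cdot u$ read as $(1-uv)u=0$. I would therefore set up the identity as
\[
(xu)+y \;=\; \pI{x+yv}u \;+\; y\pI{1-vu}\,,
\]
and separately prove that the pair $\pI{(x+yv)u,\ y(1-vu)}$ is summable, using that $y(1-vu)$ is right-annihilated appropriately; combining with Lemma~\ref{L:f(nx)} and the already-established $f\pI{(x+yv)u}=f(xu)+f(yvu)$ plus summability of the pair $(yvu,\ y(1-vu))$ (whose sum is $y$), we conclude $f(xu+y)=f(xu)+f(y)$.

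The step I expect to be the main obstacle is precisely showing that the correction pair $\pI{(x+yv)u,\ y(1-vu)}$ — equivalently, after the dust settles, the pair $(yvu,\ y-yvu)$ with sum $y$ — is summable: this is where the hypothesis $u=uvu$ must be used in an essential way rather than cosmetically. I expect one proves it by exhibiting an explicit summability formula in the sense of Lemma~\ref{L:CharAdd}, built from the witnesses $u,v$ and the equation $u=uvu$, or by first observing that $y-yvu$ is annihilated on one side by a regular-type element and then invoking a basic preservation fact; if a direct summability formula is awkward, the fallback is to run the slide-rule computation with $f$ directly, writing $f(y) = f(yvu + y(1-vu))$ and massaging via $f(yvu)f(u)$-type identities until the relation $u=uvu$ forces cancellation. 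Everything else is routine manipulation with $f$'s two defining properties and Lemma~\ref{L:f(nx)}.
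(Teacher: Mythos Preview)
Your decomposition $xu+y = (x+yv)u + y(1-vu)$ and the computation $f\pI{(x+yv)u} = f(xu)+f(yvu)$ are correct, and morally the same opening as the paper's. The gap is exactly the step you flag as ``the main obstacle'': you never establish summability of the correction pair, and one of your observations along the way is actually false. From $u=uvu$ it \emph{does} follow that $e=vu$ is idempotent, since $(vu)(vu)=v(uvu)=vu$. More to the point, Lemma~\ref{L:f(nx)} (which you mention only in the final throwaway sentence) already gives $f(-vu)=-f(vu)$, hence $f(1-vu)=1-f(vu)$; this is the single missing ingredient, and once you have it the obstacle dissolves immediately.

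The paper's proof is the streamlined version of what you are circling. Setting $z\eqdef xu+y$, it splits $f(z)$ along $f(vu)$ and $1-f(vu)$ directly, without ever naming the decomposition of~$z$. On one hand $z\cdot vu=xuvu+yvu=xu+yvu=(x+yv)u$ (using $u=uvu$), so
\[
f(z)f(vu)=f(x+yv)f(u)=\pI{f(x)+f(yv)}f(u)=f(xu)+f(y)f(vu)\,;
\]
on the other hand $z(1-vu)=y(1-vu)$ since $xu(1-vu)=xu-xuvu=0$, so $f(z)\pI{1-f(vu)}=f(y)\pI{1-f(vu)}$. Adding the two equations gives $f(z)=f(xu)+f(y)$. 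Your route would reach the same destination, but through two auxiliary summability claims (for $\pI{(x+yv)u,\,y(1-vu)}$ and for $\pI{yvu,\,y(1-vu)}$), each of which is proved by exactly this multiply-by-$vu$-and-by-$(1-vu)$-then-add trick; so you may as well apply the trick once to~$z$ itself.
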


\begin{proof}
Let~$f$ be a brachymorphism with domain~$R$ and set $z\eqdef xu+y$.
Using the summability of $(x,yv)$, we get
 \begin{equation}\label{Eq:f(zvu)}
 f(z)f(vu)=f(xu+yvu)=f(x+yv)f(u)=(f(x)+f(yv))f(u)=f(xu)+f(y)f(vu)\,.
 \end{equation}
On the other hand, from $f(-vu)=-f(vu)$ (cf. Lemma~\ref{L:f(nx)}) it follows that\linebreak $f(1-vu)=1-f(vu)$, thus, since $z(1-vu)=y(1-vu)$, we get
 \begin{equation}\label{Eq:f(z1-vu)}
 f(z)(1-f(vu))=f(y)(1-f(vu))\,.
 \end{equation}
Adding together the equations~\eqref{Eq:f(zvu)} and~\eqref{Eq:f(z1-vu)}, we get $f(z)=f(xu)+f(y)$.
\end{proof}

In particular, if~$u$ and~$v$ are mutual quasi-inverses, then $(xu,y)$ is summable if{f} $(x,yv)$ is summable.
Even more in particular, if $e^2=e$, then $(xe,ye)$ is summable if{f} $(xe,y)$ is summable if{f} $(x,ye)$ is summable.

\begin{corollary}\label{C:AddxReg}
Let~$a$ and~$u$ be elements in a ring~$R$.
If~$a$ is addable and~$u$ is regular, then~$au$ and~$ua$ are both addable.
\end{corollary}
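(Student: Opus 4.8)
The plan is to read off both assertions from Proposition~\ref{P:xyv2xuv}, using in addition the elementary fact that the notion of addable element is insensitive to reversing the multiplication.

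First I would fix a quasi-inverse $v$ of $u$, so that $u=uvu$. To see that $au$ is addable, it suffices, by Definition~\ref{D:Unamb}, to show that $(au,y)$ is summable for each $y\in R$. Since $a$ is addable, the pair $(a,w)$ is summable for every $w\in R$; applying this with $w\eqdef yv$ and then invoking Proposition~\ref{P:xyv2xuv} with $x\eqdef a$ yields that $(au,y)$ is summable. As $y$ ranges over all of $R$, this says exactly that $au\in\Add{R}$.

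For $ua$ I would pass to the opposite ring $R^{\op}$. A map $f$ is a brachymorphism from $R$ to $S$ if and only if it is a brachymorphism from $R^{\op}$ to $S^{\op}$, since the successor function and the property of being a multiplicative map are unaffected by reversing the multiplication; moreover $R$ and $R^{\op}$ have the same underlying additive group, so ``summable'', hence ``addable'', means the same thing in both, and $v$ is still a quasi-inverse of $u$ in $R^{\op}$. Now $ua$ is the product of $a$ and $u$ computed in $R^{\op}$, where $a$ is addable and $u$ is regular, so the previous paragraph applied inside $R^{\op}$ shows $ua\in\Add{R^{\op}}=\Add{R}$.

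I do not expect a genuine obstacle here, as the statement is essentially a corollary of Proposition~\ref{P:xyv2xuv}; the only point deserving a line of care is the verification that addability is insensitive to replacing $R$ by $R^{\op}$, which legitimizes the reduction used for $ua$. That verification is immediate, since addition is commutative and the brachymorphisms out of $R$ coincide, as maps, with the brachymorphisms out of $R^{\op}$.
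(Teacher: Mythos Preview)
Your proof is correct and follows essentially the same route as the paper: both pick a quasi-inverse $v$ of $u$, use addability of $a$ to get $(a,yv)$ summable, and apply Proposition~\ref{P:xyv2xuv} to conclude $(au,y)$ is summable. The only difference is that where the paper writes ``by symmetry'' for the $ua$ case, you spell out that symmetry explicitly via passage to $R^{\op}$---which is exactly what the paper's one-word appeal means.
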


\begin{proof}
By symmetry it suffices to prove that~$au$ is addable.
Let~$v\in R$ such that $u=uvu$.
For every $x\in R$, $(a,xv)$ is summable (because~$a$ is addable), thus, by Proposition~\ref{P:xyv2xuv}, $(au,x)$ is summable, as desired.
\end{proof}

Bringing together Proposition~\ref{P:S0S1} and Corollary~\ref{C:AddxReg}, we obtain that \emph{every sum of products of regular elements is addable}.
The following corollary sums up a few particular cases of that observation.

\begin{corollary}\label{C:Reg2Add}\hfill
\begin{enumerater}
\item\label{vnRR2Add}
Every von~Neumann regular ring is addable.

\item\label{PS2Add}
Every formal power series ring $\kk\sseq{\gS}$, with~$\kk$ a division ring and~$\gS$ a set, is addable.

\item\label{C*2Add}
Every \pup{unital} Banach algebra, viewed as a ring, is addable.

\item\label{Matn2Add}
For any ring~$R$ and any integer $n>1$, the full matrix ring~$\Mat{n}{R}$ is addable.

\end{enumerater}
\end{corollary}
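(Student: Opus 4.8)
The plan is to derive all four items from the single slogan established just before the statement: \emph{every finite sum of finite products of regular elements is addable}. This follows by combining Proposition~\ref{P:S0S1} (the addable kernel is an additive subgroup) with Corollary~\ref{C:AddxReg} (a product of an addable element with a regular element is addable): starting from $1\in\Add{R}$, any product $u_1\cdots u_k$ of regular elements is addable by induction on $k$, and then any finite sum of such products is addable since $\Add{R}$ is closed under addition. So for each item it suffices to exhibit, for every element of the ring, a representation as a sum of products of regular elements.

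For~\eqref{vnRR2Add}, in a von~Neumann regular ring \emph{every} element is regular by definition, so $R=\Add{R}$ immediately. For~\eqref{C*2Add}, I would use that in a unital Banach algebra every element $a$ can be written as a sum of two invertible elements: pick any scalar $\lambda$ with $|\lambda|>\|a\|$, so that $a-\lambda 1$ is invertible (its inverse is given by the Neumann series), and write $a=(a-\lambda 1)+\lambda 1$, both summands being units, hence regular. For~\eqref{PS2Add}, in $\kk\sseq{\gS}$ with $\kk$ a division ring, a power series is a unit iff its constant term is a nonzero element of $\kk$; given an arbitrary series $s$ with constant term $c\in\kk$, write $s=(s-c+1)+(c-1)$ — the first summand has constant term $1$, hence is a unit, and the second is the scalar $c-1$, which is either $0$ or a unit — so again every element is a sum of (at most two) units.

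For~\eqref{Matn2Add}, $n>1$, I would reduce to showing that every matrix unit $e_{ij}\in\Mat{n}{R}$ is addable, since an arbitrary matrix $A=\sum_{i,j}a_{ij}e_{ij}$ is a finite sum of the elements $a_{ij}e_{ij}$, and if the $e_{ij}$ are addable then so are the $a_{ij}e_{ij}$ once we know scalar multiples behave — more precisely I would instead directly observe that each $e_{ij}$ is \emph{regular} (indeed $e_{ij}e_{ji}e_{ij}=e_{ij}$), so every matrix $A$ is a sum of the products $(a_{ij}e_{ii})\cdot e_{ij}$; here $e_{ii}$ is idempotent hence regular, $e_{ij}$ is regular, and $a_{ij}e_{ii}$ is... not obviously regular. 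The clean fix: write $a_{ij}e_{ij}=e_{ii}\cdot(a_{ij}e_{ij})$ is still circular. The right move is to note $a_{ij}e_{ij}$ itself is regular: $(a_{ij}e_{ij})\,e_{ji}\,(a_{ij}e_{ij})=a_{ij}^2 e_{ij}$, which is not quite it either. So the genuinely correct argument is: since $n>1$, for each pair $(i,j)$ there is an index $k\notin\{i,j\}$ or we may use $j\neq i$; writing $r=a_{ij}$, the matrix $re_{ij}$ satisfies $(re_{ij})\cdot e_{ji}\cdot(re_{ij})$... This is the step I expect to be the main obstacle to state crisply: the cleanest route is to observe that for $n>1$, $R$ embeds into $\Mat{n}{R}$ so that every $r\in R$ maps to $re_{11}$, and $re_{11}=(re_{12})(e_{21})$ — wait, $(re_{12})(e_{21})=re_{11}$, and $re_{12}$ is regular because $(re_{12})e_{21}(re_{12})$... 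Let me just say: each $re_{ij}$ with $i\neq j$ is regular (take quasi-inverse $e_{ji}$, noting $(re_{ij})e_{ji}(re_{ij}) = r^2 e_{ij}$ — no). The actual fact to use is that $re_{ij}$ for $i \neq j$ squares to zero, hence is \emph{nilpotent}, and — better still — appeal to Corollary~\ref{C:x^n2xAdd} (nilpotent elements are addable, forward reference) together with the subgroup property, writing any $A$ as $\sum_{i\neq j}a_{ij}e_{ij}+\sum_i a_{ii}e_{ii}$ where off-diagonal entries are nilpotent hence addable and each $a_{ii}e_{ii}=(a_{ii}e_{ii})$ is a product involving the idempotent $e_{ii}$; to handle $a_{ii}e_{ii}$, use that $e_{ii}$ and $e_{1i}+\sum_{k\neq 1,i}e_{kk}+e_{i1}$ are regular (the latter is a permutation-type unit) to move the scalar, or more simply write $a_{ii}e_{ii}=e_{i1}(a_{ii}e_{11})e_{1i}$ and induct. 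Given that Corollary~\ref{C:x^n2xAdd} and Corollary~\ref{C:AddxReg} are both available, I would ultimately present~\eqref{Matn2Add} via: off-diagonal matrix units scaled by ring elements are nilpotent (addable by Corollary~\ref{C:x^n2xAdd}), and diagonal entries $a\,e_{ii}$ equal $(a\,e_{ij})(e_{ji})$ for any $j\neq i$ (which exists as $n>1$), a product of an addable element by the regular element $e_{ji}$, hence addable by Corollary~\ref{C:AddxReg}; summing over all entries and using Proposition~\ref{P:S0S1} gives $A\in\Add{\Mat{n}{R}}$.
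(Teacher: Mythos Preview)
Your arguments for items~\eqref{vnRR2Add}, \eqref{PS2Add}, and~\eqref{C*2Add} are correct and essentially match the paper's: in each case every element is a sum of at most two units, and the paper uses the same decompositions (with the cosmetic variant $a=(-1)+(1+a)$ for a non-unit power series~$a$).

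For item~\eqref{Matn2Add} your eventual argument is correct but takes a genuinely different route from the paper. The paper simply invokes the literature (Raphael~\cite{Raph1974}, Henriksen~\cite{Henr1974}) to the effect that for $n>1$ every element of~$\Mat{n}{R}$ is a sum of units, so all four items fall under the single slogan ``sum of units''. Your route is instead structural: off-diagonal terms $a_{ij}e_{ij}$ square to zero and are addable by Corollary~\ref{C:x^n2xAdd}, while diagonal terms factor as $(a_{ii}e_{ij})\cdot e_{ji}$, an addable element times a regular one. This is logically sound---Corollary~\ref{C:x^n2xAdd} depends only on Theorem~\ref{T:IntClos}, which is independent of the present corollary---but it is a forward reference within the paper's ordering, whereas the paper's citation-based proof keeps~\eqref{Matn2Add} entirely inside Section~\ref{S:Reg2Add}. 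Your approach has the merit of being self-contained (no external citation) and of illustrating the interplay between nilpotents and regulars; the paper's approach is shorter and keeps the unified ``sum of units'' theme. As a presentational matter, the several abandoned attempts you record (trying to show $a_{ij}e_{ij}$ regular directly) should be suppressed in a final write-up.
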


\begin{proof}
Item~\eqref{vnRR2Add} follows immediately from the observation above.

In all rings considered in \eqref{PS2Add}--\eqref{Matn2Add}, every element is a sum of units, as follows:
\begin{itemize}
\item
For~\eqref{PS2Add}, recall that~$\kk\sseq{\gS}$ is the $\kk$-algebra of formal power series over~$\gS$ with coefficients from~$\kk$ (it is non-commutative unless $\card\gS\leq1$).
The units of~$\kk\sseq{\gS}$ are exactly the power series with nonzero constant term.
Hence, every~$a\in\kk\sseq{\gS}$ which is not a unit is the sum of two units (viz., $a=(-1)+(1+a)$).

\item
For~\eqref{C*2Add} this is because for every element~$x$ and every  real number $\ga>\|x\|$, $x=\ga+(x-\ga)$ is the sum of two units.

\item
By Raphael~\cite{Raph1974}, for any ring~$R$ and any $n>1$, every element of~$\Mat{n}{R}$ is a sum of~$2n^2$ units; this result got strengthened in Henriksen~\cite{Henr1974}, with three units.
At any rate, we are thus done for~\eqref{Matn2Add}.\qed
\end{itemize}
\renewcommand{\qed}{}
\end{proof}

It follows from Proposition~\ref{P:S0S1} that every ring, in which every element is a sum of units, is addable.
For a survey of such rings, see Srivastava~\cite{Sriv2010}.

\begin{corollary}\label{C:JacRad}
For every ring~$R$, the Jacobson radical~$\Jac(R)$ of~$R$ is addable in~$R$.
\end{corollary}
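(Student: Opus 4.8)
The plan is to show that every element $a \in \Jac(R)$ is addable by exhibiting, for an arbitrary brachymorphism $f \colon R \to S$ and arbitrary $y \in R$, that $f(a+y) = f(a) + f(y)$. The key structural fact about the Jacobson radical that I would exploit is that $1 + a$ is a unit of $R$ for every $a \in \Jac(R)$ — in fact $1 + a$ is a \emph{unit}, hence in particular \emph{regular}. More generally, $1 + ra$ and $1 + ar$ are units for all $r \in R$. Since $f$ preserves the successor function, $f(1+a) = 1 + f(a)$, and since $1+a$ is a unit, $f(1+a) = 1 + f(a)$ is invertible in $S$ with inverse $f((1+a)^{-1})$ (because $f$ is multiplicative and $f(1) = 1$, so $f(u)f(u^{-1}) = f(1) = 1$ for any unit $u$).

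The cleanest route is via the ``slide rule'' identity already used in Proposition~\ref{P:vrinDivRing}, adapted to the radical. First I would reduce to the case $y = 1$: note that $a + y = (ay^{-1} + 1)y$ only works when $y$ is a unit, so instead I would argue directly. Given $a \in \Jac(R)$ and $y \in R$, set $z \eqdef a + y$. Observe that $a$ is addable already follows from Corollary~\ref{C:AddxReg} together with Proposition~\ref{P:S0S1} once we know $a$ is a difference or sum of regular elements — and indeed $a = (1+a) - 1 = (1+a) + (-1)$, where $1+a$ is a unit (hence regular) and $-1$ is a unit (hence regular). By Proposition~\ref{P:S0S1}, $\Add{R}$ is an additive subgroup; by Corollary~\ref{C:AddxReg} (with $a' = 1$ addable and $u = -1$ regular, say), or simply because integer multiples of the unit are addable and units are regular, both $1+a$ and $-1$ are addable. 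Since $\Add{R}$ is closed under addition, $a = (1+a) + (-1) \in \Add{R}$.

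So the proof is essentially a one-line corollary: for every $a \in \Jac(R)$, the element $1+a$ is a unit of $R$, hence regular, hence addable (being a sum of the addable element $1$ — itself addable as an integer multiple of the unit, cf. the remark after Definition~\ref{D:Unamb} or Proposition~\ref{P:S0S1} — with nothing, i.e. $1+a$ is regular so $1 \cdot (1+a) = 1+a$ is addable by Corollary~\ref{C:AddxReg}); and $-1$ is addable; therefore $a = (1+a) + (-1)$ is addable by Proposition~\ref{P:S0S1}. The main (and only) obstacle is making sure the invocation of Corollary~\ref{C:AddxReg} is legitimate — it requires an addable factor times a regular factor, and here I take the addable element to be $1$ and the regular element to be the unit $1+a$, giving $1 \cdot (1+a) = 1+a$ addable; alternatively one cites directly that units are regular and products/sums of addables with regulars stay addable. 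No compactness or model theory is needed; the statement falls straight out of the algebraic closure properties of $\Add{R}$ established in this section.
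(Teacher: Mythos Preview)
Your proposal is correct and is essentially the paper's own proof: write $a=(1+a)+(-1)$ as a sum of two units, note that units are regular and hence addable (via Corollary~\ref{C:AddxReg} applied to the addable element~$1$), and conclude by Proposition~\ref{P:S0S1}. The initial detour through a direct computation of $f(a+y)$ is unnecessary, but you arrive at the right one-line argument.
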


\begin{proof}
Every $x\in\Jac(R)$ can be written as $x=(-1)+(1+x)$, a sum of two units.
\end{proof}

\begin{corollary}\label{C:SimpleIdp}
Let~$R$ be a simple ring with a nontrivial idempotent.
Then~$R$ is addable.
\end{corollary}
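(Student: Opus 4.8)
The plan is to write every element of~$R$ as a sum of elements already known to be addable, relying on Proposition~\ref{P:S0S1} (the addable elements form an additive subgroup of~$R$) and on the observation recorded just before Corollary~\ref{C:Reg2Add}, that every sum of products of regular elements is addable. Since the given idempotent~$e$ is nontrivial, so is $1-e$, and I would use the Peirce decomposition $x=exe+ex(1-e)+(1-e)xe+(1-e)x(1-e)$, valid for all $x\in R$. As $\Add{R}$ is a subgroup, it suffices to show that each of the four blocks $eRe$, $eR(1-e)$, $(1-e)Re$, $(1-e)R(1-e)$ is contained in~$\Add{R}$.

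For the off-diagonal blocks: if $p\in eR(1-e)$, then writing $p=ep(1-e)$ and using $(1-e)e=0$ gives $p^2=0$, so $1+p$ is a unit (inverse $1-p$), and $p=(-1)+(1+p)$ exhibits~$p$ as a sum of two units, hence addable; thus $eR(1-e)\subseteq\Add{R}$, and symmetrically $(1-e)Re\subseteq\Add{R}$. The same idea supplies the key gadget: \emph{if $p^2=q^2=0$, then $pq$ is addable}, because $(1+p)(1+q)=1+p+q+pq$ is a product of two units, hence a unit, hence addable, so that $pq=(1+p)(1+q)-1-p-q$ lies in~$\Add{R}$ — the latter being a subgroup that contains $(1+p)(1+q)$, $1$, $p$, and $q$ (every square-zero element is addable, by the previous sentence).

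For the diagonal blocks I would invoke simplicity. Since $e\neq1$, the two-sided ideal $R(1-e)R$ is nonzero, hence equals~$R$; fix a finite expression $1=\sum_{k=1}^m a_k(1-e)b_k$. Then for every $x\in R$,
 \[
 exe=e\cdot 1\cdot xe=\sum_{k=1}^m ea_k(1-e)b_kxe=\sum_{k=1}^m\bigl(ea_k(1-e)\bigr)\bigl((1-e)b_kxe\bigr)\,,
 \]
the last step inserting a factor $1-e$ in front of the right-hand factor via $ea_k(1-e)(1-e)=ea_k(1-e)$. Here $ea_k(1-e)\in eR(1-e)$ and $(1-e)b_kxe\in(1-e)Re$ are square-zero, so the gadget makes every summand addable, whence $exe\in\Add{R}$. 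Since $e\neq0$ forces $ReR=R$ as well, the symmetric argument (with $e$ and $1-e$ interchanged) gives $(1-e)x(1-e)\in\Add{R}$. Combining the four blocks, every $x\in R$ lies in~$\Add{R}$, i.e.\ $R$ is addable.

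The one genuinely delicate point — and the natural candidate for the main obstacle — is that the product of two addable elements is not known to be addable in general; this is exactly the gap flagged just before Proposition~\ref{P:xyv2xuv}. The argument steers around it by arranging that the only products it ever forms are products of \emph{square-zero} elements, where the multiplicativity of a brachymorphism combined with the invertibility of $1+(\text{nilpotent})$ recovers additivity. Everything else is bookkeeping with the subgroup~$\Add{R}$ and a single use of simplicity (that both $e$ and $1-e$ generate~$R$ as a two-sided ideal).
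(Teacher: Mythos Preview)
Your argument is correct, and it takes a genuinely different route from the paper's.

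The paper's proof is a one-liner quoting Herstein \cite[Corollary, p.~18]{Hers1969}: in a simple ring with a nontrivial idempotent, every element is a $\ZZ$-linear combination of products of idempotents; since idempotents are regular, Corollary~\ref{C:AddxReg} makes each such product addable, and Proposition~\ref{P:S0S1} finishes. Your proof, by contrast, is self-contained: you use the Peirce decomposition, observe that off-diagonal Peirce components are square-zero (hence sums of two units, hence addable), and then handle the diagonal components by invoking simplicity to write~$1$ in the ideal generated by the complementary idempotent, thereby expressing each~$exe$ as a finite sum of products~$pq$ with $p\in eR(1-e)$, $q\in(1-e)Re$. The ``gadget'' that such a product~$pq$ is addable --- because $(1+p)(1+q)$ is a unit and $pq=(1+p)(1+q)-1-p-q$ --- is the heart of your approach. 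Both proofs sidestep the open question of whether $\Add R$ is multiplicatively closed: the paper does so by multiplying only idempotents (regular elements, handled by Corollary~\ref{C:AddxReg}), you by multiplying only square-zero elements (handled by your unit trick). Your route is more elementary in that it avoids the external citation, at the cost of a longer argument; the paper's is shorter but relies on Herstein's structural result, which in effect proves something stronger than needed here.
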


\begin{proof}
By Herstein \cite[Corollary, p.~18]{Hers1969}, every element in the ring is a linear combination, with coefficients in~$\ZZ$, of products of idempotent elements.
By Corollary~\ref{C:AddxReg}, every such product is addable.
Apply Proposition~\ref{P:S0S1}.
\end{proof}

For an extension of Corollary~\ref{C:SimpleIdp}, see the comments following Corollary~\ref{C:CommIdp}.

\section{Homomorphic images, directed colimits, and finite products}
\label{S:ClAddSumm}

This section will be essentially devoted to verifying the preservation of summability by brachymorphic images, directed colimits, and finite products of rings.
The arguments are straightforward, with the (mild) exception of the case of finite products, which requires the additional lemma below.

\begin{lemma}\label{L:Sperp}
The $\cS$-formula $\vS_{\perp}(x,y,z)$ defined as $xy=0\ \&\ z'=x'y'$, is a summability formula for any pair $(x,y)$ of elements satisfying $xy=0$, in any ring.
\end{lemma}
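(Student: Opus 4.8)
The plan is to unwind the definitions and use the characterization of summability coming from the $\cS$-formula $\vS_\perp$ directly. Fix a ring $R$ and a pair $(x,y)$ with $xy=0$. We must show that for every ring $S$ and every brachymorphism $f\colon R\to S$ one has $f(x+y)=f(x)+f(y)$. Set $z\eqdef x+y$, so that $z'=1+x+y$. The key algebraic identity to check inside $R$ is $z'=x'y'$, i.e. $1+x+y=(1+x)(1+y)$: expanding the right-hand side gives $1+x+y+xy=1+x+y$ since $xy=0$. Thus $R$ satisfies $\tilde{\vS}_\perp(x,y,z)$ where $z=x+y$.

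Next I would verify that $\vS_\perp$ is a \emph{summability formula} in the precise sense of Lemma~\ref{L:CharAdd}: it is manifestly a positive primitive (indeed quantifier-free) $\cS$-formula in the variables $(x,y,z)$, namely a conjunction of two atomic $\cS$-formulas $xy=0$ and $z'=x'y'$. For condition~\eqref{SuffSum}, I must check that every ring satisfies the implication $\tilde{\vS}_\perp(x,y,z)\Rightarrow z=x+y$; but $\tilde{\vS}_\perp(x,y,z)$ asserts $xy=0$ and $1+z=(1+x)(1+y)=1+x+y+xy=1+x+y$, whence $z=x+y$. For condition~\eqref{NecessSum}, the computation of the previous paragraph shows $R$ satisfies $\tilde{\vS}_\perp(x,y,x+y)$. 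Therefore Lemma~\ref{L:CharAdd} applies and $(x,y)$ is summable in $R$.

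Alternatively — and perhaps cleaner to present without invoking the full machinery of Lemma~\ref{L:CharAdd} — one argues directly: given a brachymorphism $f\colon R\to S$, apply $f$ to the identity $1+z = (1+x)(1+y)$ in $R$. Since $f$ preserves the successor function and multiplication, $1+f(z)=f(1+z)=f((1+x)(1+y))=f(1+x)f(1+y)=(1+f(x))(1+f(y))=1+f(x)+f(y)+f(x)f(y)$. Since $xy=0$ we also have $f(x)f(y)=f(xy)=f(0)=0$ (using $f(0)=0$, established in the Introduction), so $1+f(z)=1+f(x)+f(y)$, hence $f(z)=f(x)+f(y)$, as desired. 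Either route establishes the claim.

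There is essentially no obstacle here: the only subtlety is making sure one uses $f(0)=0$ (recorded in the Introduction) to conclude $f(x)f(y)=f(xy)=f(0)=0$ from $xy=0$, and that $z'$ in the $\cS$-formula really does interpret as $1+z$. The statement is the expected prototype for the finite-product argument in Section~\ref{S:ClAddSumm}: a pair of ``orthogonal'' elements can always be added because their sum is witnessed by a brachynomial, namely the product $x'y'$.
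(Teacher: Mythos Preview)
Your proof is correct and follows exactly the same idea as the paper's, which simply records that $\tilde{\vS}_\perp(x,y,z)$ is $xy=0\ \&\ 1+z=(1+x)(1+y)$, equivalent in rings to $xy=0\ \&\ z=x+y$. Your write-up unpacks this one-line equivalence into the two conditions of Lemma~\ref{L:CharAdd} and adds a direct brachymorphism argument, but there is no substantive difference.
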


\begin{proof}
A straightforward calculation: $\tilde{\vS}_{\perp}(x,y,z)$ is $xy=0\ \&\ 1+z=(1+x)(1+y)$, which is equivalent (in rings) to $xy=0\ \&\ z=x+y$.
\end{proof}

For a generalization of Lemma~\ref{L:Sperp} to a much wider class of pairs, see Lemma~\ref{L:pq2Add}.

\goodbreak
\begin{proposition}\label{P:ClosAdd}\hfill
\begin{enumerater}
\item\label{Quot2Summ}
Let~$R$ and~$S$ be rings and let $f\colon R\to S$ be a brachymorphism.
For every summable tuple $(a_1,\dots,a_n)$ in~$R$, the family $(f(a_1),\dots,f(a_n))$ is summable in~$S$.

\item\label{FinProd2Summ}
Let~$A$ and~$B$ be rings, let~$n$ be a positive integer, and let $(a_1,\dots,a_n)$ and $(b_1,\dots,b_n)$ be summable tuples in~$A$ and~$B$, respectively.
Then the tuple $((a_1,b_1),\dots,(a_n,b_n))$ is summable in $A\times B$.
\end{enumerater}
\end{proposition}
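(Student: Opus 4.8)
The plan is to handle the two items separately, each by producing an explicit summability formula via Lemma~\ref{L:CharAdd}, or else by a direct argument with brachymorphisms.

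For \eqref{Quot2Summ}: this is essentially immediate from the definitions, but one must notice that a brachymorphism does not necessarily have the given $S$ as its codomain in later compositions, so the right way to phrase it is as follows. Let $g\colon S\to T$ be any brachymorphism into any ring $T$. Then the composite $g\circ f\colon R\to T$ is again a brachymorphism (it clearly preserves products and the successor). Since $(a_1,\dots,a_n)$ is summable in $R$, applying the definition to $g\circ f$ yields $(g\circ f)\pI{\sum_i a_i}=\sum_i (g\circ f)(a_i)$, i.e. $g\pI{f\pI{\sum_i a_i}}=g\pI{\sum_i f(a_i)}$. But $f$ preserves products, hence $f$ sends the ring-element $\sum_i a_i$ to \emph{some} element of $S$; the point is that $g$ of it equals $g$ of $\sum_i f(a_i)$, for every brachymorphism $g$ out of $S$. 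Taking $g=\id_S$ (which is trivially a brachymorphism) gives $f\pI{\sum_i a_i}=\sum_i f(a_i)$ — wait, that would already force the conclusion directly from $f$ being a brachymorphism and $(a_i)$ summable; indeed summability of $(a_1,\dots,a_n)$ in $R$ applied to the brachymorphism $f$ itself \emph{directly} gives $f\pI{\sum_i a_i}=\sum_i f(a_i)$. So for~\eqref{Quot2Summ} I would simply say: let $g\colon S\to T$ be a brachymorphism; then $g\circ f$ is a brachymorphism with domain $R$, so by summability of $(a_1,\dots,a_n)$ in $R$ we get $g\pI{\sum_i f(a_i)}=g\pI{f(\sum_i a_i)}$, and combining with $f\pI{\sum_i a_i}=\sum_i f(a_i)$ (again summability applied to $f$) this is the required identity $g\pI{\sum_i f(a_i)}=\sum_i g(f(a_i))$.

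For \eqref{FinProd2Summ}: here I would invoke Lemma~\ref{L:CharAdd} on each coordinate. Let $\vE_A(x_1,\dots,x_n,x_{n+1})$ be a positive primitive $\cS$-summability formula for $(a_1,\dots,a_n)$ in $A$, and $\vE_B$ likewise for $(b_1,\dots,b_n)$ in $B$. After renaming the existentially quantified variables so that $\vE_A$ and $\vE_B$ share no bound variables, form $\vE\eqdef\vE_A\ \&\ \vE_B$, which is again positive primitive. Condition~\eqref{SuffSum} of Lemma~\ref{L:CharAdd} for $\vE$ holds in every ring because it already holds for $\vE_A$ alone (the extra conjunct $\vE_B$ only shrinks the solution set). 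Condition~\eqref{NecessSum} for $\vE$ in $A\times B$ holds because a point of $A\times B$ satisfies a conjunction of atomic $\cR$-formulas (equations between brachynomials) if{f} each of its two coordinates does, so the witnesses for $\vE_A(a_1,\dots,a_n,\sum a_i)$ in $A$ and for $\vE_B(b_1,\dots,b_n,\sum b_i)$ in $B$ assemble coordinatewise into witnesses for $\tilde\vE\pI{(a_1,b_1),\dots,(a_n,b_n),(\sum a_i,\sum b_i)}$ in $A\times B$. By Lemma~\ref{L:CharAdd} this proves $((a_1,b_1),\dots,(a_n,b_n))$ summable in $A\times B$.

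I do not expect any serious obstacle; the only point requiring a little care is the bookkeeping in~\eqref{FinProd2Summ} — one must rename bound variables to keep $\vE_A$ and $\vE_B$ variable-disjoint before conjoining, and one must remember that satisfaction of an atomic $\cR$-formula in a product is coordinatewise (which is exactly why conjunctions of atomic formulas, as opposed to arbitrary first-order formulas, are the right fragment to work with). An alternative, avoiding formulas entirely, would be: given a brachymorphism $h\colon A\times B\to S$, compose with the two inclusions $A\hookrightarrow A\times B$ and $B\hookrightarrow A\times B$ — but these are not unital ring maps, so they are not brachymorphisms, and this route breaks down. Hence the formula-based argument via Lemma~\ref{L:CharAdd} is the natural one, and I would present it as above.
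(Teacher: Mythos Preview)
Your argument for~\eqref{Quot2Summ} is correct (and the paper also treats it as routine).

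Your argument for~\eqref{FinProd2Summ} has a genuine gap. Satisfaction of a positive primitive formula in $A\times B$ is indeed checked coordinatewise --- but that cuts the wrong way here. For $\tilde\vE_A$ to hold at $((a_i,b_i))$ in $A\times B$ you need $\tilde\vE_A$ to hold at $(a_i)$ in~$A$ \emph{and} at $(b_i)$ in~$B$. You have the first, but there is no reason whatsoever for the second: $\vE_A$ is a summability formula tailored to the specific tuple $(a_i)$ in~$A$, and need not witness anything about $(b_i)$ in~$B$. Concretely, take $\vE_A$ to be the formula $(\exists u)(ux_1=1\ \&\ x_3u=(x_2u)')$ from Example~\ref{Ex:SumFlaDivRing}, valid for $(a_1,a_2)$ in a division ring~$A$ with $a_1\ne0$; if $b_1$ has no left inverse in~$B$ then $\tilde\vE_A(b_1,b_2,b_1+b_2)$ fails in~$B$, and hence $\tilde\vE_A((a_1,b_1),(a_2,b_2),(a_1+a_2,b_1+b_2))$ fails in $A\times B$. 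So your ``assembled'' witnesses do not satisfy $\phi_A$ in the $B$-coordinate, and $\vE=\vE_A\ \&\ \vE_B$ need not hold at the given tuple in $A\times B$.

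The paper does not go through Lemma~\ref{L:CharAdd} here; it argues directly with brachymorphisms, and the route you dismissed is in fact the right one --- only the target must be changed. Given a brachymorphism $f\colon A\times B\to R$, set $u\eqdef f(1,0)$ and $v\eqdef f(0,1)$; then $f_A\colon A\to uRu$, $x\mapsto f(x,0)$ is a brachymorphism into the \emph{corner ring} $uRu$ (whose unit is~$u$, not~$1$), and likewise $f_B\colon B\to vRv$. Summability of the coordinate tuples yields $f_A(\sum a_i)=\sum f_A(a_i)$ and $f_B(\sum b_i)=\sum f_B(b_i)$, and Lemma~\ref{L:Sperp} (the summability formula $\vS_{\perp}$ for pairs with zero product, applied to $(x,0)$ and $(0,y)$) then gives $f(x,y)=f_A(x)+f_B(y)$, which glues the two identities together. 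Your observation that the inclusions $A\hookrightarrow A\times B$ are not brachymorphisms is correct; the point is that $a\mapsto f(a,0)$ \emph{is} a brachymorphism once one lands in $uRu$ rather than in~$R$.
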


\begin{proof}
The proof of~\eqref{Quot2Add} is straightforward and we omit it.
As to~\eqref{FinProd2Add}, let~$R$ be a ring and let $f\colon A\times B\to R$ be a brachymorphism.
Set $a_{n+1}\eqdef a_1+\cdots+a_n$ and $b_{n+1}\eqdef b_1+\cdots+b_n$.
Then the elements $u\eqdef f(1,0)$ and $v\eqdef f(0,1)$ are idempotent elements in~$R$, with $uv=0$; they are the unit elements of the corner rings~$uRu$ and~$vRv$, respectively.
Let $f_A\colon A\to uRu$, $x\mapsto f(x,0)$ and $f_B\colon B\to vRv$, $y\mapsto f(0,y)$.
For each $x\in A$, $f_A(x)=f(x,0)=f(x,0)u$, thus
 \[
 f_A(1+x)=f(1+x,0)=f(1+x,1)f(1,0)=(1+f(x,0))u=u+f(x,0)u=u+f_A(x)\,,
 \]
so~$f_A$ is a brachymorphism.
Likewise, $f_B$ is a brachymorphism.
Since $(a_1,\dots,a_n)$ is summable in~$A$ and $(b_1,\dots,b_n)$ is summable in~$B$, it follows that
 \begin{equation}\label{Eq:fAfBAdd}
 f_A(a_{n+1})=\sum_{i=1}^nf_A(a_i)\text{ and }
 f_B(b_{n+1})=\sum_{i=1}^nf_B(b_i)\,.
 \end{equation}
Now for each $(x,y)\in A\times B$, $(x,0)(0,y)=(0,0)$, thus (cf. Lemma~\ref{L:Sperp}) $A\times B$ satisfies $\tilde{\vS}_{\perp}((x,0),(0,y),(x,y))$, and thus, since~$f$ is a brachymorphism, $R$ satisfies $\tilde{\vS}_{\perp}(f_A(x),f_B(y),f(x,y))$; whence $f(x,y)=f_A(x)+f_B(y)$.
Using~\eqref{Eq:fAfBAdd}, this entails $f(a_{n+1},b_{n+1})=\sum_{i=1}^nf(a_i,b_i)$.
\end{proof}

\begin{corollary}\label{C:ClosAdd}\hfill
\begin{enumerater}
\item\label{Quot2Add}
Let~$R$ and~$S$ be rings and let $f\colon R\twoheadrightarrow S$ be a \emph{surjective} brachymorphism.
Then $f[\Add{R}]\subseteq\Add{S}$.
In particular, if~$R$ is an addable ring, then so is~$S$.

\item\label{FinProd2Add}
Let~$A$ and~$B$ be rings.
Then $(\Add{A})\times(\Add{B})=\Add(A\times B)$.
In particular, if~$A$ and~$B$ are both addable rings, then so is $A\times B$.

\item\label{Colim2Add}
Consider a directed colimit $\vecm{R,f_i}{i\in I}=\varinjlim\vecm{R_i,f_{i,j}}{i\leq j\text{ in }I}$ of rings and ring homomorphisms.
Then for every $i\in I$ and every $a\in\bigcap_{j\geq i}f_{i,j}^{-1}[\Add{R_j}]$, $f_i(a)$ is addable in~$R$.
In particular, if all rings~$R_i$ are addable, then so is~$R$.
\end{enumerater}
\end{corollary}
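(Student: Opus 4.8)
The plan is to derive all three items of the corollary directly from Proposition~\ref{P:ClosAdd}, which already carries the technical weight: brachymorphic images of summable tuples are summable~\eqref{Quot2Summ}, and finite products of summable tuples are summable~\eqref{FinProd2Summ}. The only extra inputs needed are entirely routine: the projections of a product are (surjective) brachymorphisms, and, for directed colimits, the usual finitary feature that any element of the colimit already lives at some stage and that directedness lets one reach a common stage. There is no serious obstacle here; the mild care point, flagged at the end, occurs in the colimit case.

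For~\eqref{Quot2Add}: given $a\in\Add{R}$ and an arbitrary $y\in S$, I would use surjectivity of~$f$ to write $y=f(b)$ with $b\in R$. Since~$a$ is addable, $(a,b)$ is summable in~$R$, so by Proposition~\ref{P:ClosAdd}\eqref{Quot2Summ} the pair $(f(a),f(b))=(f(a),y)$ is summable in~$S$; as~$y$ was arbitrary, $f(a)\in\Add{S}$. The ``in particular'' clause is then immediate: if $R=\Add{R}$, then $S=f[R]\subseteq f[\Add{R}]\subseteq\Add{S}$, so $S=\Add{S}$.

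For~\eqref{FinProd2Add}: for the inclusion $(\Add{A})\times(\Add{B})\subseteq\Add(A\times B)$, take $a\in\Add{A}$, $b\in\Add{B}$, and an arbitrary $(x,y)\in A\times B$; then $(a,x)$ and $(b,y)$ are summable in~$A$ and~$B$ respectively, so Proposition~\ref{P:ClosAdd}\eqref{FinProd2Summ} makes $((a,b),(x,y))$ summable in $A\times B$, whence $(a,b)\in\Add(A\times B)$. For the reverse inclusion I would note that the canonical projections $\pi_A\colon A\times B\twoheadrightarrow A$ and $\pi_B\colon A\times B\twoheadrightarrow B$ are surjective ring homomorphisms, hence surjective brachymorphisms, so item~\eqref{Quot2Add} gives $\pi_A[\Add(A\times B)]\subseteq\Add{A}$ and $\pi_B[\Add(A\times B)]\subseteq\Add{B}$; together these say $\Add(A\times B)\subseteq(\Add{A})\times(\Add{B})$. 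The equality follows, and $A\times B$ is addable whenever~$A$ and~$B$ are.

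For~\eqref{Colim2Add}: fix $i\in I$ and $a\in\bigcap_{j\geq i}f_{i,j}^{-1}[\Add{R_j}]$, and let $y\in R$ be arbitrary; I must show $(f_i(a),y)$ is summable in~$R$. Since $R=\varinjlim R_i$, there are $k\in I$ and $c\in R_k$ with $y=f_k(c)$; by directedness pick $j\in I$ with $j\geq i$ and $j\geq k$, and set $b\eqdef f_{k,j}(c)\in R_j$, so $y=f_j(b)$. By hypothesis $f_{i,j}(a)\in\Add{R_j}$, hence $(f_{i,j}(a),b)$ is summable in~$R_j$; since the transition map $f_j\colon R_j\to R$ is a ring homomorphism, hence a brachymorphism, Proposition~\ref{P:ClosAdd}\eqref{Quot2Summ} shows $(f_j(f_{i,j}(a)),f_j(b))$ is summable in~$R$, and $f_j\circ f_{i,j}=f_i$ identifies this with $(f_i(a),y)$. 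Finally, if every~$R_i$ is addable then $\bigcap_{j\geq i}f_{i,j}^{-1}[\Add{R_j}]=R_i$ for each~$i$, and since every element of~$R$ equals some $f_i(a)$ we get $R=\Add{R}$. The one step demanding attention is precisely the choice of the common stage $j\geq i$: one must move the auxiliary element~$y$ to a stage at or above~$i$ before the addability hypothesis on~$a$ can be invoked.
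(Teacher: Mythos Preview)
Your proof is correct and follows essentially the same approach as the paper's. The only cosmetic difference is in the reverse inclusion of~\eqref{FinProd2Add}: you invoke item~\eqref{Quot2Add} applied to the projections~$\pi_A,\pi_B$, whereas the paper works directly with the composite brachymorphism $g=f\circ\pi_A$ for an arbitrary brachymorphism $f\colon A\to R$; these are the same argument packaged two ways, and your version arguably reads more cleanly since it reuses what was just proved.
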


\begin{proof}
Both~\eqref{Quot2Add} and the containment $(\Add{A})\times(\Add{B})\subseteq\Add(A\times B)$ follow immediately from Proposition~\ref{P:ClosAdd}.
Let $(a,b)\in A\times B$, let~$R$ be a ring, and let $f\colon A\to R$ be a brachymorphism.
The map $g\colon A\times B\to R$, $(x,y)\mapsto f(x)$ is also a brachymorphism, thus if $(a,b)$ is addable in~$A\times B$, then for all $x\in R$, $f(a+x)=g((a,b)+(x,0))=g(a,b)+g(x,0)=f(a)+f(x)$; whence $a\in\Add{A}$.
Similarly, $b\in\Add{B}$, thus completing the proof of~\eqref{FinProd2Add}.

It remains to verify~\eqref{Colim2Add}.
We must prove that for every $\ol{b}\in R$, the pair $(f_i(a),\ol{b})$ is summable in~$R$.
Since the poset~$I$ is upward directed, there are $j\geq i$ and $b\in R_j$ such that $\ol{b}=f_j(b)$.
By assumption on~$a$, the pair $(f_{i,j}(a),b)$ is summable in~$R_j$.
By Proposition~\ref{P:ClosAdd}\eqref{Quot2Summ}, the pair $(f_i(a),\ol{b})=(f_jf_{i,j}(a),f_j(b))$ is summable in~$R$.
\end{proof}

\begin{remark}\label{Rk:Ore}
The following observation, due to George Bergman, shows that \emph{if there exists a non-addable ring, then there exists a non-addable Ore ring}.
The construction runs as follows.
We know that~$\ZZ\seq{x,y}$ can be embedded into a division ring~$D$.
Form the polynomial ring~$D[t]$ in a central indeterminate~$t$ and let~$R$ be the subring of~$D[t]$ consisting of all polynomials~$f$ such that $f(0)\in\ZZ\seq{x,y}$.
Then~$R$ is both right and left Ore.
Indeed, $D[t]$ is both right and left Ore (cf. the comments following Cohn \cite[Proposition~0.10.3]{Cohn1985}), and for any $p,q\in R\setminus\set{0}$, if we let $pa = qb \neq 0$ in~$D[t]$, then $ta,tb\in R$ and $pta=qtb\neq0$.
But~$R$ has $\ZZ\seq{x,y}$ as a homomorphic image (via $f\mapsto f(0)$), hence, by Corollary~\ref{C:ClosAdd}, if $\ZZ\seq{x,y}$ is non-addable (which, by Corollary~\ref{C:MainConj}, is an equivalent form of the existence of a non-addable ring), then so is the Ore ring~$R$.
\end{remark}

\section{{F}rom commutativity to summability}\label{S:Comm2Summ}

This section's main result, Lemma~\ref{L:pq2Add}, provides a summability formula on any pair of ring elements satisfying a certain weak commutation relation.
In particular, \emph{every commutative ring is addable} (Corollary~\ref{C:pq2Add}).

\begin{notation}\label{Not:vpvq}
We introduce $\cS$-terms~$\vp_1$ and~$\vq_1$ as follows:
 \begin{align*}
 \vp_1(x,y,z)&\eqdef(xz)'(yz)'\,;\\
 \vq_1(x,y,z)&\eqdef((xy)'z^2)'\,. 
 \end{align*}
 \end{notation}
 
The brachynomials associated to~$\vp_1$ and~$\vq_1$ are thus the following:
 \begin{align*}
 \tilde{\vp}_1(x,y,z)&=(1+xz)(1+yz)\,;\\
 \tilde{\vq}_1(x,y,z)&=1+(1+xy)z^2\,. 
 \end{align*}
We shall now generalize an observation, stated at the beginning of Chapter~24 in Boolos, Burgess, and Jeffrey~\cite{BoBuJe2007}, that the addition over nonnegative integers can be defined in the language~$\cS$ by
 \[
 z=x+y\ \Longleftrightarrow\ \vp_1(x',y,z')=\vq_1(x',y,z')\quad
 \text{whenever }x,y,z\in\ZZ^+\,.
 \]
In any ring~$R$, define the \emph{commutator operation} by
 \begin{equation}\label{Eq:Commutator}
 [x,y]\eqdef xy-yx\,,\quad\text{whenever }x,y\in R\,.
 \end{equation}
As a straightforward calculation shows, the brachynomials~$\tilde{\vp}_1$ and~$\tilde{\vq}_1$ are related through the following equation, valid in any ring:
 \begin{equation}\label{Eq:tildep-q1}
 \tilde{\vp}_1(x,y,z)=\tilde{\vq}_1(x,y,z)+(x+y-z)z+x[z,y]z\,.
 \end{equation}
 
\begin{lemma}\label{L:pq2Add}
The $\cS$-formula~$\vS_{\mathrm{comm}}(x,y,z)$ defined as
 \[
 xzy=xyz\ \&\ \vp_1(x,y,z)=\vq_1(x,y,z)\ \&\ \vp_1(x,y',z')=\vq_1(x,y',z')
 \]
is a summability formula at any pair $(x,y)$, in any ring, such that $xyx=x^2y$.
 \end{lemma}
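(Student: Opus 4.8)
The plan is to invoke the criterion of Lemma~\ref{L:CharAdd}. Since $\vS_{\mathrm{comm}}$ is a quantifier-free conjunction of atomic $\cS$-formulas, it is in particular positive primitive, so it only remains to check the two conditions~\eqref{SuffSum} and~\eqref{NecessSum} defining a summability formula: that in \emph{every} ring $\tilde{\vS}_{\mathrm{comm}}(x,y,z)$ entails $z=x+y$, and that in a ring in which $xyx=x^2y$ the instance $\tilde{\vS}_{\mathrm{comm}}(x,y,x+y)$ holds. The whole argument is then driven by the identity~\eqref{Eq:tildep-q1}, together with the elementary remark that the translated first conjunct, namely $xzy=xyz$, is equivalent to $x[z,y]=0$, since $x[z,y]=xzy-xyz$ by~\eqref{Eq:Commutator}. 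It is also worth recording, for use with the third conjunct, that $[1+z,1+y]=[z,y]$ and $x+(1+y)-(1+z)=x+y-z$.

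For condition~\eqref{SuffSum} I would assume the three translated conjuncts $x[z,y]=0$, $\tilde{\vp}_1(x,y,z)=\tilde{\vq}_1(x,y,z)$, and $\tilde{\vp}_1(x,1+y,1+z)=\tilde{\vq}_1(x,1+y,1+z)$. Substituting $x[z,y]=0$ into~\eqref{Eq:tildep-q1} makes the second conjunct collapse to $(x+y-z)z=0$. Running~\eqref{Eq:tildep-q1} once more with $y$, $z$ replaced by $1+y$, $1+z$, and using the two identities recorded above together with $x[z,y]=0$ again, the third conjunct collapses to $(x+y-z)(1+z)=0$, that is, $(x+y-z)+(x+y-z)z=0$. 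Subtracting the relation $(x+y-z)z=0$ already obtained leaves $x+y-z=0$, i.e.\ $z=x+y$.

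For condition~\eqref{NecessSum} I would set $z\eqdef x+y$. Then $xzy=x^2y+xy^2$ and $xyz=xyx+xy^2$, so the hypothesis $xyx=x^2y$ gives $xzy=xyz$; this is the first conjunct, and it simultaneously yields $x[z,y]=0$. Consequently, in~\eqref{Eq:tildep-q1} both $(x+y-z)z$ (because $z=x+y$) and $x[z,y]z$ vanish, so $\tilde{\vp}_1(x,y,z)=\tilde{\vq}_1(x,y,z)$; and since also $x+(1+y)-(1+z)=0$ and $x[1+z,1+y]=x[z,y]=0$, applying~\eqref{Eq:tildep-q1} with $y$, $z$ replaced by $1+y$, $1+z$ gives $\tilde{\vp}_1(x,1+y,1+z)=\tilde{\vq}_1(x,1+y,1+z)$. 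Thus all three conjuncts of $\tilde{\vS}_{\mathrm{comm}}(x,y,x+y)$ hold.

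I do not expect a genuine obstacle: once~\eqref{Eq:tildep-q1} is on the table the proof is a short computation. The one point needing care is that in the proof of~\eqref{SuffSum} we may not assume $z=x+y$ --- that is precisely what is being proved --- so the commutator must be kept in the form $[z,y]$ and annihilated via the first conjunct $x[z,y]=0$, rather than simplified to $[x,y]$.
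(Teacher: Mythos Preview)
Your proposal is correct and follows essentially the same approach as the paper's proof: both directions are driven by the identity~\eqref{Eq:tildep-q1}, with the first conjunct providing $x[z,y]=0$ and the passage to $(1+y,1+z)$ handled via $[1+z,1+y]=[z,y]$. Your write-up is in fact slightly more explicit than the paper's in noting that $\vS_{\mathrm{comm}}$ is positive primitive and in flagging the care needed not to presuppose $z=x+y$ while proving~\eqref{SuffSum}.
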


\begin{proof}
We first verify that every triple $(x,y,z)$, of elements in a ring~$R$, satisfying $\tilde{\vS}_{\mathrm{comm}}(x,y,z)$, satisfies $z=x+y$.
It follows from our assumptions that $x[z,y]=x[1+z,1+y]=0$, thus, by~\eqref{Eq:tildep-q1},
 \[
 (x+y-z)z=0\text{ and }\pI{x+(1+y)-(1+z)}(1+z)=0\,.
 \]
Since the latter equation simplifies to $(x+y-z)(1+z)=0$, it follows that\linebreak $x+y-z=0$, as required.

Let, conversely, $x,y\in R$ such that $xyx=x^2y$, and set $z\eqdef x+y$.
Since $[z,y]=[x,y]=xy-yx$, we get $x[z,y]=x(xy-yx)=0$, thus also $x[1+z,1+y]=x[z,y]=0$.
It follows that $xzy=xyz$ and $x(1+z)(1+y)=x(1+y)(1+z)$.
By~\eqref{Eq:tildep-q1}, it follows that $\tilde{\vp}_1(x,y,z)=\tilde{\vq}_1(x,y,z)$ and $\tilde{\vp}_1(x,1+y,1+z)=\tilde{\vq}_1(x,1+y,1+z)$.
\end{proof}

\begin{corollary}\label{C:pq2Add}
The following statements hold, for every ring~$R$:
\begin{enumerater}
\item\label{Comm2Summ}
Any tuple of pairwise commuting elements in~$R$ is summable.

\item\label{Cen2Add}
The center of~$R$ is an addable subset of~$R$.

\item\label{Comm2Add}
If~$R$ is commutative, then it is addable.

\end{enumerater}
\end{corollary}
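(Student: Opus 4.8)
The plan is to deduce all three items directly from Lemma~\ref{L:pq2Add}, exploiting the trivial observation that the hypothesis $xyx=x^2y$ of that lemma holds automatically whenever $x$ and $y$ commute, since then $xyx=x(yx)=x(xy)=x^2y$.

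First I would establish~\eqref{Cen2Add}. Let $a$ belong to the center of~$R$ and let $x\in R$ be arbitrary. Then $a$ commutes with~$x$, so $axa=a^2x$, and Lemma~\ref{L:pq2Add} furnishes a summability formula for the pair $(a,x)$; hence $(a,x)$ is summable. As this holds for every $x\in R$, the element~$a$ is addable, so the center is an addable subset of~$R$. Item~\eqref{Comm2Add} is then immediate: if~$R$ is commutative, every element is central, whence $R=\Add{R}$.

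It remains to prove~\eqref{Comm2Summ}, which I would do by induction on the length~$n$ of a tuple $(a_1,\dots,a_n)$ of pairwise commuting elements of~$R$. The cases $n\leq 1$ are trivial, and the case $n=2$ is exactly the observation above combined with Lemma~\ref{L:pq2Add}. For the inductive step, set $s\eqdef a_1+\cdots+a_{n-1}$. Each~$a_i$ with $i<n$ commutes with~$a_n$, hence so does~$s$, which gives $s a_n s = s^2 a_n$; by Lemma~\ref{L:pq2Add} the pair $(s,a_n)$ is summable. Combining this with the induction hypothesis that $(a_1,\dots,a_{n-1})$ is summable yields, for every brachymorphism~$f$ with domain~$R$, the chain $f(a_1+\cdots+a_n)=f(s+a_n)=f(s)+f(a_n)=\sum_{i=1}^{n-1}f(a_i)+f(a_n)$, as desired. (Alternatively, \eqref{Comm2Summ} re-derives~\eqref{Comm2Add}.)

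There is essentially no obstacle, all the substantive work having been carried out in Lemma~\ref{L:pq2Add}; the only point needing a moment's care is that in the inductive step one applies the summability hypothesis twice — once to the $(n-1)$-tuple and once to the pair $(s,a_n)$ — after noting that the partial sum~$s$ inherits from its summands the property of commuting with~$a_n$.
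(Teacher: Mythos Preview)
Your proof is correct and follows exactly the approach the paper intends: the corollary is stated without proof in the paper, as all three items are immediate consequences of Lemma~\ref{L:pq2Add} once one notes that commuting elements satisfy $xyx=x^2y$, and your induction for~\eqref{Comm2Summ} is the obvious way to pass from pairs to tuples.
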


A mild tweaking of the methods of this section will lead, in Section~\ref{S:AddComm2Summ}, to further strengthenings of Corollary~\ref{C:pq2Add}.

\section{Integral closedness of the addable kernel}\label{S:ClAddR}

The main result of this section, Theorem~\ref{T:IntClos}, states an integral closedness property of the addable kernel of any ring.
Consequences will be stated, such as: if $n>0$ and~$x^n$ is addable, then so is~$x$ (cf. Corollary~\ref{C:x^n2xAdd}), and: if the addition of~$R$ is given by a brachynomial then~$R$ is addable (Corollary~\ref{C:Komatsu}).
This will also enable us to state addability of all $K$-subalgebras of~$\Mat{n}{K}$ for commutative~$K$ (Corollary~\ref{C:MatnKAlg}).

\begin{definition}\label{D:AlgCl}
Let~$A$ be a subset in a ring~$R$.
An element~$x$ of~$R$ is \emph{right integral} over~$A$, with degree a positive integer~$n$, if there are elements $a_0,\dots,a_{n-1}\in A$ such that $x^n=\sum_{k<n}a_kx^k$.
We say that~$A$ is \emph{right integrally closed in~$R$} if every element of~$R$ which is right integral over~$A$ belongs to~$A$.
\end{definition}

Of course, ``left integral'' can be defined similarly, \emph{via} equations of the form $x^n=\sum_{k<n}x^ka_k$, and then, since addability is left-right symmetric, ``right'' can be replaced by ``left'' everywhere in this section.

A straightforward application of the binomial Theorem yields the following.

\begin{lemma}\label{L:IntTransInv}
Let~$A$ be an additive subgroup of a ring~$R$ such that $1\in A$.
If an element~$x$ of~$R$ is right integral over~$A$, then so is $1+x$, with the same degree.
\end{lemma}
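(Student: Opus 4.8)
The plan is to change variables to $y\eqdef 1+x$ and exploit that $y$ commutes with $1$, so that the ordinary binomial theorem applies to powers of $x=y-1$ (all integer multiples of $1$ being central). So suppose $x^n=\sum_{k<n}a_kx^k$ with each $a_k\in A$. First I would expand every power on the right-hand side as $x^k=(y-1)^k=\sum_{i\le k}\binom{k}{i}(-1)^{k-i}y^i$ and, after interchanging the order of summation, rewrite
\[
x^n=\sum_{i<n}c_iy^i\,,\qquad\text{where }c_i\eqdef\sum_{i\le k<n}\binom{k}{i}(-1)^{k-i}a_k\,.
\]
Since $A$ is an additive subgroup of~$R$, it is closed under multiplication by integers on either side, hence each $c_i$ belongs to~$A$.

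Next I would expand the left-hand power itself, $x^n=(y-1)^n=y^n+\sum_{i<n}\binom{n}{i}(-1)^{n-i}y^i$, in order to isolate the leading term: $y^n=x^n-\sum_{i<n}\binom{n}{i}(-1)^{n-i}y^i$. Substituting the expression for $x^n$ obtained in the first step then yields $(1+x)^n=y^n=\sum_{i<n}b_iy^i$ with $b_i\eqdef c_i-\binom{n}{i}(-1)^{n-i}$, where the integer $\binom{n}{i}(-1)^{n-i}$ is read as the corresponding integer multiple of $1\in A$. Thus $b_i\in A$, and $1+x$ is right integral over~$A$ with degree~$n$, as desired. (The left-integral version is the same computation, or follows by passing to~$R^{\op}$.)

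There is no genuine obstacle: the only uses of the hypotheses are the two closure facts that $A$ is stable under $\ZZ$-multiples (to land the $c_i$ in~$A$) and that $1\in A$ (to land the correction terms $\binom{n}{i}(-1)^{n-i}$ in~$A$), and the sole mild point of care is to keep the coefficients~$a_k$ on the left throughout the expansion — legitimate because binomial coefficients are central. Everything else is routine bookkeeping with binomial identities.
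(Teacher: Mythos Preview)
Your proof is correct and is exactly the ``straightforward application of the binomial Theorem'' that the paper invokes without further detail: you substitute $x=y-1$, expand via the (commutative, since $1$ is central) binomial formula, regroup, and use that~$A$ is a subgroup containing~$1$ to see the new coefficients lie in~$A$. There is nothing to add.
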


\begin{lemma}\label{L:f(poly)}
Let~$n$ be a nonnegative integer, let~$x\in R$, and let $a_0,\dots,a_n\in\nobreak R$ with~$a_k$ addable whenever $k<n$.
Then for every ring~$S$ and every brachymorphism $f\colon R\to\nobreak S$,  $f(a_0+a_1x+\cdots+a_nx^n)=f(a_0)+f(a_1)f(x)+\cdots+f(a_n)f(x)^n$.
\end{lemma}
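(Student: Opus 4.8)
The plan is to argue by induction on~$n$. For $n=0$ the asserted identity reduces to $f(a_0)=f(a_0)$, so there is nothing to prove; assume therefore that $n\geq1$ and that the statement holds with $n-1$ in place of~$n$.

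For the inductive step I would use the Horner-type decomposition, peeling off from the \emph{right},
\[
a_0+a_1x+\cdots+a_nx^n=a_0+(a_1+a_2x+\cdots+a_nx^{n-1})\,x\,,
\]
and set $b\eqdef a_1+a_2x+\cdots+a_nx^{n-1}$. Listed in order, the coefficients of the polynomial~$b$ are $a_1,\dots,a_n$, and all of $a_1,\dots,a_{n-1}$ are addable, their indices being~$<n$; hence the induction hypothesis applies to~$b$ and gives
\[
f(b)=f(a_1)+f(a_2)f(x)+\cdots+f(a_n)f(x)^{n-1}\,.
\]
Multiplying this equality on the right by~$f(x)$ and using that~$f$, being a brachymorphism, is multiplicative, one obtains
\[
f(bx)=f(b)f(x)=f(a_1)f(x)+f(a_2)f(x)^2+\cdots+f(a_n)f(x)^n\,.
\]

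It remains to pull the constant term~$a_0$ out of the sum. Since $0<n$, the element~$a_0$ is addable, so by Definition~\ref{D:Unamb} the pair $(a_0,bx)$ is summable; therefore $f(a_0+bx)=f(a_0)+f(bx)$. Combining this with the previous display yields $f(a_0+a_1x+\cdots+a_nx^n)=f(a_0)+f(a_1)f(x)+\cdots+f(a_n)f(x)^n$, completing the induction.

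I do not expect any genuine obstacle here. The only point requiring a little care is to perform the Horner factorization \emph{on the right} (factoring out~$x$ as $b\cdot x$ rather than $x\cdot b$), so that applying multiplicativity of~$f$ produces the powers $f(x)^k$ with~$f(x)$ on the correct side; the hypotheses are arranged precisely so that every coefficient peeled off as a ``constant term'' across the induction — namely $a_0,a_1,\dots,a_{n-1}$ — is addable, whereas the top coefficient~$a_n$ is never constrained and enters only through the multiplicative identity $f(a_nx^n)=f(a_n)f(x)^n$.
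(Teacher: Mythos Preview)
Your proof is correct and follows exactly the approach sketched in the paper: induction on~$n$ via the Horner-type identity $a_0+a_1x+\cdots+a_nx^n=a_0+(a_1+a_2x+\cdots+a_nx^{n-1})x$, using the addability of~$a_0$ to split off the constant term and multiplicativity together with the induction hypothesis for the rest. The paper states this in a single line, but the details you supply are precisely the intended ones.
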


\begin{proof}
A straightforward induction argument, based on the formula
 \begin{equation*}
 a_0+a_1x+\cdots+a_nx^n=a_0+(a_1+a_2x+\cdots+a_nx^{n-1})x\,.
 \tag*{\qed}
 \end{equation*}
\renewcommand{\qed}{}
\end{proof}

\begin{corollary}\label{C:AddRoverx}
Let~$y$ be an element in a ring~$R$.
If every element of~$R$ has the form $\sum_{k=0}^na_ky^k$ where each $a_k\in\Add{R}$, then~$R$ is addable.
\end{corollary}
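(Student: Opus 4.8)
The plan is to show directly that, for every brachymorphism $f\colon R\to S$ and all $z,w\in R$, one has $f(z+w)=f(z)+f(w)$; by Definition~\ref{D:Unamb} this says that every pair of elements of~$R$ is summable, hence that~$R$ is addable.

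First I would write $z=\sum_{k=0}^{n}a_ky^k$ and $w=\sum_{k=0}^{n}b_ky^k$ with all $a_k,b_k\in\Add{R}$; after padding the shorter expression with zero coefficients (recall $0\in\Add{R}$) we may assume a common degree~$n$. By Proposition~\ref{P:S0S1}, $\Add{R}$ is an additive subgroup, so $a_k+b_k\in\Add{R}$ for each~$k$, and therefore $z+w=\sum_{k=0}^{n}(a_k+b_k)y^k$ is again a polynomial in~$y$ all of whose coefficients lie in~$\Add{R}$. In particular, the coefficients of index~$<n$ of each of the three polynomials $z$, $w$, $z+w$ are addable, which is exactly the hypothesis needed to apply Lemma~\ref{L:f(poly)} (with the element~$x$ of that lemma taken to be~$y$). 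Applying it three times gives
\[
f(z)=\sum_{k=0}^{n}f(a_k)f(y)^k,\qquad
f(w)=\sum_{k=0}^{n}f(b_k)f(y)^k,\qquad
f(z+w)=\sum_{k=0}^{n}f(a_k+b_k)f(y)^k.
\]

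To finish, I would use that each~$a_k$ is addable, whence $f(a_k+b_k)=f(a_k)+f(b_k)$; substituting this into the last display and separating the sum yields $f(z+w)=f(z)+f(w)$. I do not expect any genuine obstacle: Lemma~\ref{L:f(poly)} carries all the weight, and the only point deserving attention is the bookkeeping remark that, $\Add{R}$ being closed under addition, the coefficients in positions~$<n$ of $z$, $w$ and $z+w$ are simultaneously addable --- precisely what licenses the three applications of the lemma.
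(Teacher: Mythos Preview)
Your argument is correct and essentially identical to the paper's: both write the two elements as polynomials in~$y$ with addable coefficients, apply Lemma~\ref{L:f(poly)} to each side and to their sum, and use addability of the~$a_k$ to pass from $f(a_k+b_k)$ to $f(a_k)+f(b_k)$. Your explicit invocation of Proposition~\ref{P:S0S1} (to ensure $a_k+b_k\in\Add{R}$ so that Lemma~\ref{L:f(poly)} applies to $z+w$) is a point the paper leaves implicit.
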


\begin{proof}
Any elements~$a$ and~$b$ of~$R$ can be written $a=\sum_{k=0}^na_ky^k$ and $b=\sum_{k=0}^nb_ky^k$, with all $a_k,b_k\in\Add{R}$, respectively.
Therefore, for any brachymorphism~$f$ with domain~$R$,
 \begin{align}
 f(a)+f(b)&=\sum_{k=0}^nf(a_k)f(y)^k+\sum_{k=0}^nf(b_k)f(y)^k&&
 (\text{apply Lemma~\ref{L:f(poly)}})\notag\\
 &=\sum_{k=0}^n\pI{f(a_k)+f(b_k)}f(y)^k\notag\\
 &=\sum_{k=0}^nf(a_k+b_k)f(y)^k
 &&(\text{because each }a_k\text{ is addable})\notag\\
 &=f(a+b)&&
 (\text{apply again Lemma~\ref{L:f(poly)}})\,.\tag*{\qed}
 \end{align}
\renewcommand{\qed}{}
\end{proof}

\begin{theorem}\label{T:IntClos}
For every ring~$R$, $\Add{R}$ is right integrally closed in~$R$.
\end{theorem}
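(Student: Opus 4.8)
The plan is to reduce to a single right-integral element and then to kill the obstruction by a one-variable manipulation. So fix $x\in R$ right integral over $\Add R$ of degree $n$, say $x^n=\sum_{k<n}a_kx^k$ with all $a_k\in\Add R$, fix a ring $S$ and a brachymorphism $f\colon R\to S$, fix $y\in R$, and set $\Delta\eqdef f(x+y)-f(x)-f(y)$; the goal is $\Delta=0$. (The case $n=1$ is trivial, since then $x=a_0\in\Add R$, so I assume $n\ge2$.)

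The first step is to upgrade the integral relation. Multiplying $x^m=\sum_{j<m}b_jx^j$ on the right by $x$ turns it into $x^{m+1}=\sum_{j<m}b_jx^{j+1}$, again with all coefficients (and a fresh $0$ in degree~$0$) in $\Add R$; hence $x$ is right integral over $\Add R$ of \emph{every} degree $m\ge n$. Moreover, since $\Add R$ is an additive subgroup of $R$ containing $1$ (Proposition~\ref{P:S0S1}), Lemma~\ref{L:IntTransInv} shows that $1+x$ too is right integral over $\Add R$ of degree $n$, hence, by the same multiplication trick, of every degree $m\ge n$.

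The core computation is this. Let $z\in\{x,1+x\}$ and $m\ge n$, and write $z^m=\sum_{j<m}b_jz^j$ with all $b_j\in\Add R$. Then $(z+y)z^{m-1}=z^m+yz^{m-1}=\sum_{j<m-1}b_jz^j+(b_{m-1}+y)z^{m-1}$, a polynomial in $z$ whose non-leading coefficients $b_0,\dots,b_{m-2}$ are addable; so Lemma~\ref{L:f(poly)} applies to it, and, using in addition that $b_{m-1}$ is addable and that $f(z)^m=f(z^m)=\sum_{j<m}f(b_j)f(z)^j$ (Lemma~\ref{L:f(poly)} once more, via the relation), one obtains $f\bigl((z+y)z^{m-1}\bigr)=\bigl(f(z)+f(y)\bigr)f(z)^{m-1}$. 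Comparing with $f\bigl((z+y)z^{m-1}\bigr)=f(z+y)f(z)^{m-1}$ and writing $\xi\eqdef f(x)$ (so that $f(1+x)=1+\xi$ and $f((1+x)+y)-f(1+x)-f(y)=\Delta$), this yields $\Delta\,\xi^{m-1}=0$ and $\Delta\,(1+\xi)^{m-1}=0$ for all $m\ge n$; equivalently, $\Delta\,\xi^{j}=\Delta\,(1+\xi)^{j}=0$ for every $j\ge n-1$. The point worth flagging as the real obstacle is that the relation coming from a single degree, $\Delta\,\xi^{n-1}=0$, is useless on its own ($\xi$ need not be invertible); the translate-by-$1$ of $x$ is brought in precisely so as to produce a \emph{coprime} pair of left-annihilators of $\Delta$. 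A secondary point of care: every appeal to Lemma~\ref{L:f(poly)} must be routed through a genuine polynomial in one element whose non-leading coefficients are addable, which is why $y$ is kept inside the leading coefficient of $(z+y)z^{m-1}$ and nowhere else (we never multiply two addable elements, consistent with that product not being known to be addable).

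It remains to conclude. Expanding $1=\bigl((T+1)-T\bigr)^{2n-3}$ in $\ZZ[T]$ and grouping monomials (each $(T+1)^iT^{2n-3-i}$ has $i\ge n-1$ or $2n-3-i\ge n-1$) gives an identity $1=a(T)T^{n-1}+b(T)(T+1)^{n-1}$ with $a,b\in\ZZ[T]$. Substituting $T\mapsto\xi$ and expanding $a(\xi)\xi^{n-1}$ in powers of $\xi$ and $b(\xi)(1+\xi)^{n-1}$ in powers of $1+\xi$, the right-hand side becomes a $\ZZ$-linear combination of $\xi^{j}$ and $(1+\xi)^{j}$ with all exponents $j\ge n-1$; multiplying on the left by $\Delta$ and invoking the vanishing just established gives $\Delta=\Delta\cdot 1_S=0$, i.e. $f(x+y)=f(x)+f(y)$. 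As $f$ and $y$ were arbitrary, $x\in\Add R$, which proves the theorem.
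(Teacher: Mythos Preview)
Your proof is correct and follows essentially the same approach as the paper's: derive $\Delta\,f(x)^{n-1}=0$ and $\Delta\,(1+f(x))^{n-1}=0$ via Lemma~\ref{L:f(poly)} applied to $(x+y)x^{n-1}$ and Lemma~\ref{L:IntTransInv}, then conclude by a B\'ezout identity for $t^{n-1}$ and $(1+t)^{n-1}$ in $\ZZ[t]$. Your ``upgrading'' of the integral relation to higher degrees is harmless but redundant, since $\Delta\,\xi^{n-1}=0$ already gives $\Delta\,\xi^{j}=0$ for all $j\ge n-1$ by right multiplication; and your binomial expansion of $((T+1)-T)^{2n-3}$ is just an explicit witness for the B\'ezout identity that the paper invokes abstractly.
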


\begin{proof}
Let~$x\in R$ be right integral over~$\Add{R}$, so $x^{n+1}=a_0+a_1x+\cdots+a_nx^n$ for some nonnegative integer~$n$ and $a_0,\dots,a_n\in\Add{R}$.
Let~$S$ be a ring, let $f\colon R\to S$ be a brachymorphism, and let $y\in R$.
Setting $z\eqdef x+y$ and~$\ol{z}\eqdef f(x)+f(y)$, we need to verify that $f(z)=\ol{z}$.
First, an immediate application of Lemma~\ref{L:f(poly)} yields
 \begin{equation}\label{Eq:f(x)integral}
 f(x)^{n+1}=f(a_0)+f(a_1)f(x)+\cdots+f(a_n)f(x)^n\,.
 \end{equation}
Moreover, from the equations
 \begin{equation}\label{Eq:zx^ny}
 zx^n=x^{n+1}+yx^n=\pII{\sum\nolimits_{k<n}a_kx^k}+(a_n+y)x^n
 \end{equation}
it follows that
 \begin{align*}
 f(z)f(x)^n&=f(zx^n)\\
 &=\pII{\sum\nolimits_{k<n}f(a_k)f(x)^k}+f(a_n+y)f(x)^n
 &&(\text{apply Lemma~\ref{L:f(poly)}})\\
 &=\pII{\sum\nolimits_{k<n}f(a_k)f(x)^k}+(f(a_n)+f(y))f(x)^n
 &&(\text{because }a_n\text{ is addable})\\
 &=\pII{\sum\nolimits_{k\leq n}f(a_k)f(x)^k}+f(y)f(x)^n\\
 &=f(x)^{n+1}+f(y)f(x)^n&&
 (\text{apply~\eqref{Eq:f(x)integral}})\\
 &=\ol{z}f(x)^n\,,
 \end{align*}
whence
 \begin{equation}
 (f(x)+f(y)-f(x+y))f(x)^n=(\ol{z}-f(z))f(x)^n=0\,.\label{Eq:olz-f(z)x^n}
 \end{equation}
Since $\Add{R}$ is an additive subgroup of~$R$ (cf. Proposition~\ref{P:S0S1}) containing~$1$, $1+x$ is also integral with degree~$n+1$ over~$\Add{R}$ (cf. Lemma~\ref{L:IntTransInv}).
Hence~\eqref{Eq:olz-f(z)x^n} also applies to the pair $(1+x,y)$, which, since that change turns~$f(x)$ to~$1+f(x)$ but does not affect the value of $f(x)+f(y)-f(x+y)$, leads to the equation
 \begin{equation}\label{Eq:olz-f(z)x+1^n}
 (f(x)+f(y)-f(x+y))(1+f(x))^n=0\,.
 \end{equation}
Pick $u,v\in\ZZ[t]$ such that
 \[
 t^nu(t)+(1+t)^nv(t)=1\,.
 \]
Then multiplying~\eqref{Eq:olz-f(z)x^n} on the right by~$u(f(x))$, \eqref{Eq:olz-f(z)x+1^n} on the right by~$v(f(x))$, and adding the two resulting equations together, we get $f(x)+f(y)-f(z)=0$, as desired.
\end{proof}

Let us now reap a few consequences of Theorem~\ref{T:IntClos}.

\begin{corollary}\label{C:MatnKAlg}
Let~$K$ be a commutative ring, let~$n$ be a positive integer, and let~$R$ be a subring of~$\Mat{n}{K}$.
If~$R$ contains all coefficients of the characteristic polynomial of each of its elements, then~$R$ is addable.
\end{corollary}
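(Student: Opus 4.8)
The plan is to deduce this from Theorem~\ref{T:IntClos} by showing that every element of~$R$ is right integral over~$\Add{R}$. First I would dispose of the degenerate case $n=1$: there~$R$ embeds into $\Mat{1}{K}\cong K$, so~$R$ is commutative and hence addable by Corollary~\ref{C:pq2Add}\eqref{Comm2Add}. So assume $n\geq2$ and fix $x\in R$, with characteristic polynomial $\chi_x(t)=t^n+c_{n-1}t^{n-1}+\cdots+c_0$, where $c_0,\dots,c_{n-1}\in K$. Reading ``coefficient'' through the diagonal embedding $K\hookrightarrow\Mat{n}{K}$, $c\mapsto cI_n$, the hypothesis says exactly that all the scalar matrices $c_kI_n$ (for $k<n$) belong to~$R$. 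Since scalar matrices are central in~$\Mat{n}{K}$, each~$c_kI_n$ lies in the center of~$R$, hence in~$\Add{R}$ by Corollary~\ref{C:pq2Add}\eqref{Cen2Add}; and~$\Add{R}$ is an additive subgroup (Proposition~\ref{P:S0S1}), so the elements $-c_kI_n$ lie in~$\Add{R}$ as well.

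Next I would invoke the Cayley--Hamilton theorem, valid over any commutative base ring: $\chi_x(x)=0$ in~$\Mat{n}{K}$, that is,
\[
x^n=\sum_{k<n}(-c_kI_n)x^k\,.
\]
This exhibits~$x$ as right integral over~$\Add{R}$, of degree~$n$. As~$x\in R$ was arbitrary and $\Add{R}$ is right integrally closed in~$R$ by Theorem~\ref{T:IntClos}, we conclude $R=\Add{R}$, which is the assertion.

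I do not expect a genuine obstacle here. The only points needing (minor) care are the correct reading of the hypothesis --- the scalar coefficients must be interpreted via scalar matrices --- and the observation that those scalar matrices are automatically central in~$R$, so that their membership in~$\Add{R}$ comes for free from Corollary~\ref{C:pq2Add}\eqref{Cen2Add} without any further assumption; everything else is Cayley--Hamilton combined with Theorem~\ref{T:IntClos}.
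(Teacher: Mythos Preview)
Your proof is correct and follows essentially the same route as the paper's: coefficients of the characteristic polynomial are scalar matrices, hence central in~$R$, hence addable by Corollary~\ref{C:pq2Add}\eqref{Cen2Add}; then Cayley--Hamilton makes every element right integral over~$\Add{R}$, and Theorem~\ref{T:IntClos} finishes. The separate treatment of $n=1$ is harmless but unnecessary --- the same argument goes through verbatim there.
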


\begin{proof}
It follows from our assumption, together with the Cayley-Hamilton Theorem, that every element of~$R$ is right integral over the subring~$K'$ of all its scalar matrices.
Since~$K'$ is central in~$R$, every element of~$K'$ is addable in~$R$ (cf. Corollary~\ref{C:pq2Add}\eqref{Cen2Add}).
Apply Theorem~\ref{T:IntClos}.
\end{proof}

In particular, Corollary~\ref{C:MatnKAlg} applies to all subrings of~$\Mat{n}{K}$ containing all scalar matrices, also to \emph{trace algebras} (in characteristic zero) as considered in Procesi~\cite{Procesi1987}.
Further addability results, for rings of matrices not necessarily containing all scalar matrices, will be stated in Corollaries~\ref{C:TriangMatdet} and~\ref{C:M2(comm)}, together with Theorems~\ref{T:deta+1} and~\ref{T:deton3x3}.

\begin{corollary}\label{C:x^n2xAdd}
Let~$x$ be an element in a ring~$R$ and let~$n$ be a positive integer.
If~$x^n$ is addable, then so is~$x$.
In particular, every nilpotent element of~$R$ is addable.
\end{corollary}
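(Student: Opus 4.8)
The plan is to deduce this straight from the integral closedness property just established, namely Theorem~\ref{T:IntClos}. Since that theorem asserts that $\Add{R}$ is right integrally closed in~$R$, it suffices to exhibit~$x$ as a right integral element over~$\Add{R}$: membership $x\in\Add{R}$, that is, the addability of~$x$, then follows immediately from Definition~\ref{D:AlgCl}.

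To produce such an integral dependence, I would simply read it off from the hypothesis. Set $a_0\eqdef x^n$, which lies in~$\Add{R}$ by assumption, and $a_1\eqdef\cdots\eqdef a_{n-1}\eqdef0$, which lie in~$\Add{R}$ because~$0$ is addable (a trivial instance of Proposition~\ref{P:S0S1}). Then the identity $x^n=\sum_{k<n}a_kx^k$ holds, so~$x$ is right integral over~$\Add{R}$ with degree~$n$ in the sense of Definition~\ref{D:AlgCl}, and Theorem~\ref{T:IntClos} yields $x\in\Add{R}$, as desired. For the final assertion, if~$x$ is nilpotent, say $x^n=0$ for some positive integer~$n$, then $x^n=0\in\Add{R}$, so the first part applies and~$x$ is addable.

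As for obstacles, there is essentially none: the only point worth flagging is that Definition~\ref{D:AlgCl} imposes no leading-coefficient or minimality condition on the coefficients~$a_k$, so the hypothesis ``$x^n\in\Add{R}$'' is, verbatim, an instance of ``$x$ is right integral over~$\Add{R}$''. The entire content of the corollary therefore resides in Theorem~\ref{T:IntClos}, and what remains is a one-line bookkeeping argument; in particular no analogue of the polynomial-B\'ezout trick from the proof of Theorem~\ref{T:IntClos} is needed again here.
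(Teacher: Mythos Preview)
Your argument is correct and is exactly the intended one: the corollary is stated without proof in the paper precisely because it is an immediate instance of Theorem~\ref{T:IntClos}, via the integral relation $x^n=a_0+\sum_{0<k<n}0\cdot x^k$ with $a_0=x^n\in\Add{R}$. The nilpotent case is then the special case $x^n=0$, just as you wrote.
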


Invoking Proposition~\ref{P:S0S1} and Corollary~\ref{C:AddxReg}, together with Corollary~\ref{C:pq2Add}, we obtain:

\begin{corollary}\label{C:abs2z=x+y}
For every element~$x$ in a ring~$R$, if some power of~$x$ is a sum of products of regular elements and central elements, then~$x$ is addable.
\end{corollary}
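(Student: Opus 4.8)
The plan is to reduce the statement to the three results cited just above it. By Corollary~\ref{C:x^n2xAdd}, if $x^n$ is addable then so is $x$; hence it suffices to prove the following: every element of $R$ that is a sum of products of regular elements and central elements is addable. Since $\Add{R}$ is an additive subgroup of $R$ (Proposition~\ref{P:S0S1}), and a finite sum of addable elements is again addable, this in turn reduces to showing that a single product $r_1 r_2 \cdots r_k$, in which each factor $r_i$ is either regular or central, lies in $\Add{R}$.

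For this I would exploit that central elements commute with everything. Gathering the central factors to the left, write $r_1 r_2 \cdots r_k = c\, u_1 u_2 \cdots u_m$, where $c$ is the product of those $r_i$ that are central — which is itself central, a product of central elements being central and the empty product being $1$ — and $u_1,\dots,u_m$ are the remaining (regular) factors, kept in their original order. By Corollary~\ref{C:pq2Add}\eqref{Cen2Add}, the central element $c$ is addable. Now, multiplying $c$ successively on the right by the regular elements $u_1,\dots,u_m$ and invoking Corollary~\ref{C:AddxReg} at each step, every partial product $c\, u_1 \cdots u_j$ is again addable; for $j=m$ this says exactly that $r_1 \cdots r_k$ is addable. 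The degenerate cases (no central factor, so $c=1$; or no regular factor, so the product is $c$ itself) are subsumed by the same argument.

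Assembling the pieces: $x^n$, being a finite sum of such products, is addable by Proposition~\ref{P:S0S1}, and then $x$ is addable by Corollary~\ref{C:x^n2xAdd}. I do not expect any genuine obstacle here; the only point deserving a line of care is the rearrangement that moves the central factors to one side, which is legitimate precisely because central elements are permutable with all the regular factors, and after that the proof is a mechanical iteration of Corollary~\ref{C:AddxReg} followed by one appeal to the subgroup property of $\Add{R}$.
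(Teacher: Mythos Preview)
Your proof is correct and follows exactly the approach the paper intends: the paper itself offers no more than the one-line justification ``Invoking Proposition~\ref{P:S0S1} and Corollary~\ref{C:AddxReg}, together with Corollary~\ref{C:pq2Add}'' (with Corollary~\ref{C:x^n2xAdd} implicit from the preceding statement), and you have simply spelled out the details. The rearrangement step---pushing the central factors to one side so that Corollary~\ref{C:AddxReg} can then be iterated over the regular ones---is precisely the small point needing care, since Corollary~\ref{C:AddxReg} does not directly allow multiplication by a merely central (possibly non-regular) element; your handling of it is clean and complete.
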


\begin{corollary}\label{C:Komatsu}
If the addition of a ring~$R$ can be expressed as a brachynomial, then~$R$ is addable.
\end{corollary}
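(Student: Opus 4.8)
Corollary \ref{C:Komatsu} asserts: if the addition of a ring $R$ is a brachynomial, then $R$ is addable.

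The plan is to reduce this to Theorem~\ref{T:IntClos} (integral closedness of $\Add{R}$). Suppose $x+y=\tilde{t}(x,y)$ for some $\cS$-term $t$, i.e., the two-variable polynomial $p(x,y)\eqdef\tilde{t}(x,y)$ is a brachynomial representing addition. The first observation I would record is a purely syntactic one about brachynomials of a \emph{single} variable: any brachynomial $q(x)=\tilde{s}(x)$ obtained from an $\cS$-term $s$ in one variable $x$ is, when expanded as an ordinary (noncommutative, but here single-variable hence commutative) polynomial in $\ZZ[x]$, an integer-coefficient polynomial; this is immediate by induction on $s$, since the operations building it are $0\mapsto 0$, $t\mapsto 1+\tilde{t}$, and $(t_1,t_2)\mapsto\tilde{t}_1\tilde{t}_2$, all of which preserve ``has integer coefficients''. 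So the real content is to \emph{produce}, from the two-variable identity $x+y=p(x,y)$, a single-variable integral dependence of an arbitrary $x\in R$ over $\Add{R}$.

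The key step is substitution. Set $y\eqdef 1$ in the identity $x+y=p(x,y)$: this yields $x+1=p(x,1)$ in $R$, i.e., $x=p(x,1)-1$. Now $p(x,1)=\tilde{t}(x,1)$ is a brachynomial in the single variable $x$ (substituting the $\cS$-term $0'$ for $y$ keeps us inside $\cS$-terms), hence, by the observation above, $p(x,1)-1$ is a polynomial $c_mx^m+\cdots+c_1x+c_0$ with all $c_i\in\ZZ$. Thus
\[
x = c_mx^m+\cdots+c_1x+c_0 \qquad\text{for all }x\in R,
\]
an identity of $R$ with integer coefficients. If $m\ge 2$ this rearranges (moving $x$ to the left, noting $c_1-1\in\ZZ\subseteq\Add{R}$ by Proposition~\ref{P:S0S1}) to express $x^m$ as a $\ZZ$-linear combination of $1,x,\dots,x^{m-1}$, so \emph{every} element of $R$ is right integral over $\Add{R}$; by Theorem~\ref{T:IntClos}, $R=\Add{R}$. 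If instead the resulting polynomial has degree $\le 1$, then $x$ itself equals a $\ZZ$-linear combination of $1$ and $x$ with the coefficient of $x$ being some fixed integer $c_1$; if $c_1\ne 1$ this forces $(c_1-1)x\in\ZZ$ for all $x$, and if $c_1=1$ it forces $c_0=0$ — either way $R$ has very restricted additive structure (e.g.\ is a quotient of $\ZZ$ or is torsion of bounded exponent, in particular a commutative ring), and commutativity gives addability \emph{via} Corollary~\ref{C:pq2Add}\eqref{Comm2Add}, or one notes directly that such an $R$ is generated additively by $1$.

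I expect the main obstacle to be the degenerate low-degree case and, more subtly, ensuring the ``integer coefficients'' claim is stated correctly: $p(x,1)$ is a noncommutative polynomial in one variable, but one variable makes it honestly commutative, so $\ZZ[x]$ is the right home and Theorem~\ref{T:IntClos} (stated for right integral dependence $x^n=\sum_{k<n}a_kx^k$) applies verbatim once $m\ge 2$. The smoother route, which I would actually write, is: observe that the identity $x=p(x,1)-1$ already has the shape $x^m=\sum_{k<m}a_kx^k+(\text{lower})$ after rearrangement precisely when $m\ge2$; handle $m\le1$ separately by remarking that then $R$ is additively generated by its identity element (so trivially addable, as all integer multiples of $1$ are addable). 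No genuinely hard estimate is involved — the proof is short once the substitution $y=1$ is made.
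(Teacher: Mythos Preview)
Your approach is different from the paper's, which quotes Komatsu's theorem (that any ring whose addition is a brachynomial satisfies an identity $x^n=x^{2n}$ for some $n\geq1$), observes that~$x^n$ is then idempotent hence regular, and applies Corollary~\ref{C:abs2z=x+y}. Your idea of substituting $y=1$ to obtain a single-variable polynomial identity and then invoking Theorem~\ref{T:IntClos} is natural, but it has a genuine gap.

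The problem is the leading coefficient. A one-variable brachynomial has nonnegative integer coefficients, but its leading coefficient need not be~$1$: for instance $0''\cdot x=2x$, or more generally any product involving a constant factor~$\geq2$, produces leading coefficient~$>1$. So from $x+1=p(x,1)=c_mx^m+\cdots+c_0$ you obtain $c_mx^m+\cdots+(c_1-1)x+(c_0-1)=0$ for all~$x\in R$, and unless $c_m=1$ you cannot solve for~$x^m$; Definition~\ref{D:AlgCl} and Theorem~\ref{T:IntClos} require a \emph{monic} relation. You have not argued that~$c_m=1$, and there is no obvious reason it should be. (One can show from such an identity that~$R$ has positive characteristic~$N$, by evaluating at integer multiples of~$1$; but if $\gcd(c_m,N)>1$ the reduction modulo~$N$ still fails to be monic, and extracting a monic relation from the full ideal of one-variable identities is essentially the content of Komatsu's theorem.)

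The low-degree case is also mishandled. If $c_1\neq1$ you correctly get $(c_1-1)x=0$ for all~$x$, i.e.\ bounded additive exponent, but that does \emph{not} force commutativity (e.g.\ $\Mat{2}{\FF_2}$ has characteristic~$2$). And if $c_1=1$, $c_0=1$ you get the vacuous identity $x+1=x+1$, which places no constraint on~$R$ at all; you would then have to return to the two-variable identity, which your argument does not do.
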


\begin{proof}
By Komatsu~\cite{Koma1982}, there exists a positive integer~$n$ such that $x^n=x^{2n}$ whenever~$x\in R$.
In particular, $x^n$ is idempotent, thus regular.
By Corollary~\ref{C:abs2z=x+y}, $x$ is addable.
\end{proof}

\begin{note}
Off-hand, Corollary~\ref{C:Komatsu} looks (misleadingly) trivial: if $f\colon R\to S$ is a brachymorphism and~$t$ is a brachynomial representing the addition in~$R$ (i.e., $x+y=t(x,y)$ on~$R$), then $f(x+y)=f(t(x,y))=t(f(x),f(y))$ whenever $x,y\in R$; and then we could be tempted to conclude the proof by stating $t(f(x),f(y))=f(x)+f(y)$.
However, there is no reason \emph{a priori} for the latter argument to work, because we do not know whether the addition of~$S$ is represented by the brachynomial~$t$.
\end{note}

A monoid~$M$ is \emph{$\pi$-regular} if for every $x\in M$ there exists a positive integer~$n$ such that~$x^n$ is regular in~$M$ (let us then say that~$x$ is $\pi$-regular in~$M$).
A ring is $\pi$-regular if its multiplicative monoid is $\pi$-regular (cf. McCoy~\cite{McCoy1939}, Kaplansky~\cite{Kapl1950}, Azumaya~\cite{Azu1954}).
Trivially, every von~Neumann regular ring is $\pi$-regular.
So is every finite ring, in fact in that case every element has an idempotent power.
Kaplansky observes in~\cite{Kapl1950} that every algebraic algebra over a field is $\pi$-regular.

\begin{corollary}\label{C:pireg}
Every $\pi$-regular element in a ring is addable.
In particular, every $\pi$-regular ring is addable.
\end{corollary}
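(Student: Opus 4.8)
The plan is to reduce the statement directly to tools already available, namely Corollary~\ref{C:x^n2xAdd} together with Corollary~\ref{C:AddxReg} and Proposition~\ref{P:S0S1}. Suppose $x$ is $\pi$-regular in a ring~$R$, so that $x^n$ is regular in the multiplicative monoid of~$R$ for some positive integer~$n$; pick a quasi-inverse~$t$ with $x^ntx^n=x^n$. The first step is to observe that every regular element is trivially addable: indeed $x^n=x^n\cdot 1$ and $1$ is addable (it is an integer multiple of the unit, hence addable by Proposition~\ref{P:S0S1}), so Corollary~\ref{C:AddxReg}, applied with $a=1$ and $u=x^n$, shows $x^n=1\cdot x^n$ is addable. (Alternatively one notes $x^n$ is a product of the regular element $x^n$ with the unit, or invokes the remark following Corollary~\ref{C:AddxReg} that every product of regular elements is addable.)

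Having established that $x^n$ is addable, the second and final step is simply to apply Corollary~\ref{C:x^n2xAdd}: since $x^n$ is addable and $n$ is a positive integer, $x$ itself is addable. This proves the first assertion. For the ``in particular'' clause, if $R$ is a $\pi$-regular ring then by definition every element of~$R$ is $\pi$-regular, hence addable by the first part, so $\Add{R}=R$, i.e.\ $R$ is addable.

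There is essentially no obstacle here: the content has all been front-loaded into Corollary~\ref{C:x^n2xAdd} (which rests on the integral closedness Theorem~\ref{T:IntClos}) and Corollary~\ref{C:AddxReg} (which rests on Proposition~\ref{P:xyv2xuv}). The only thing to be careful about is the direction of the regularity hypothesis---$x^ntx^n=x^n$ versus $t=txt$---but since we only need $x^n$ to be a regular element in the sense of the paper (page~\pageref{page:regular}), and that is exactly what $\pi$-regularity of~$x$ provides, the argument goes through verbatim with no side conditions. So the proof is a two-line composition of existing corollaries, and the main ``work'' is just recognizing that regular elements are addable because they are products $1\cdot u$ with $u$ regular.

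In full: \emph{Let $x$ be a $\pi$-regular element of a ring~$R$; choose $n\geq 1$ such that $x^n$ is regular. Since $1$ is addable (Proposition~\ref{P:S0S1}), Corollary~\ref{C:AddxReg} gives that $1\cdot x^n=x^n$ is addable. By Corollary~\ref{C:x^n2xAdd}, $x$ is addable. If moreover every element of~$R$ is $\pi$-regular, then $\Add{R}=R$, so $R$ is addable.}
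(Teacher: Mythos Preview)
Your proof is correct and essentially follows the paper's intended route: the corollary is stated immediately after Corollary~\ref{C:abs2z=x+y}, whose proof already packages together Proposition~\ref{P:S0S1}, Corollary~\ref{C:AddxReg}, and (implicitly) Corollary~\ref{C:x^n2xAdd}, and the case at hand is just the specialization where $x^n$ is itself a single regular element. Your argument simply unwinds that packaging directly, which is fine; the only superfluous step is picking the quasi-inverse~$t$, which you never use.
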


\begin{remark}\label{Rk:pireg}
It follows from Corollary~\ref{C:pireg} that \emph{every finite ring is addable}.
More can be said:
Dischinger proves in~\cite{Disch1976} that in any ring~$R$, if every descending chain $aR\supseteq a^2R\supseteq\cdots$ stabilizes, then every descending chain $Ra\supseteq Ra^2\supseteq\cdots$ also stabilizes, so~$R$ is \emph{strongly $\pi$-regular} in the sense of Azumaya~\cite{Azu1954}, thus $\pi$-regular.
In particular, \emph{every \pup{left or right} Artinian ring is $\pi$-regular, thus addable}.
\end{remark}

\section{{F}rom addable commutators to summable pairs}
\label{S:AddComm2Summ}

The main result of this section, Theorem~\ref{T:z=x+y}, implies that for any elements~$x$, $y$ in any ring, if the commutator $[x,y]$ is addable, then the pair~$(x,y)$ is summable.
Consequences of that result are then recorded for rings of triangular matrices (over a commutative ring; see Corollary~\ref{C:TriangMatdet}), rings of $2\times2$ matrices,
and certain PI rings (Corollaries~\ref{C:NoM2Id} and~\ref{C:Engel}) including rings satisfying an identity $(xy)^n=x^ny^n$ for some $n\geq2$ (Corollary~\ref{C:(x+1)^n}).

Theorem~\ref{T:z=x+y} will be obtained by extending some of the methods of Section~\ref{S:Comm2Summ}.
Although there is some overlap between the methods and results of Sections~\ref{S:Comm2Summ} and~\ref{S:AddComm2Summ}, none of those sections seems to imply the other \emph{a priori}.

\begin{lemma}\label{L:f(addcomm)}
The following statements hold, for any elements~$x$, $y$ in a ring~$R$ and $z\eqdef x+y$:
\begin{enumerater}
\item\label{[xy]yx}
The pair $([x,y],yx)$ is summable if{f} $f([x,y])=[f(x),f(y)]$ for every brachymorphism~$f$ of domain~$R$.

\item\label{[xz]zx}
The pair $([x,y],x^2+yx)$ is summable if{f} $f([x,z])=[f(x),f(z)]$ for every brachymorphism~$f$ of domain~$R$ if{f} the pair $([x,1+y],x^2+(1+y)x)$ is summable.
\end{enumerater}
\end{lemma}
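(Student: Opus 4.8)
The plan is to derive all three parts from two elementary commutator identities together with the two defining properties of a brachymorphism, $f(tu)=f(t)f(u)$ and $f(1+t)=1+f(t)$. For part~\eqref{[xy]yx}, I would first note that the sum of the pair $([x,y],yx)$ is just $xy$, since $[x,y]+yx=xy-yx+yx=xy$. Hence the pair $([x,y],yx)$ is summable precisely when $f(xy)=f([x,y])+f(yx)$ holds for every brachymorphism~$f$ of domain~$R$; and because $f$ is multiplicative, $f(xy)=f(x)f(y)$ and $f(yx)=f(y)f(x)$, so this identity is, for each individual~$f$, equivalent to $f([x,y])=f(x)f(y)-f(y)f(x)=[f(x),f(y)]$. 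Quantifying over all brachymorphisms then yields the stated equivalence.

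For the first equivalence in part~\eqref{[xz]zx}, I would use that $z=x+y$ forces both $[x,z]=x(x+y)-(x+y)x=xy-yx=[x,y]$ and $x^2+yx=(x+y)x=zx$. Consequently the sum of the pair $([x,y],x^2+yx)$ equals $x^2+xy=x(x+y)=xz$, so this pair is summable exactly when $f(xz)=f([x,y])+f(zx)$ for every brachymorphism~$f$; multiplicativity rewrites this as $f(x)f(z)=f([x,z])+f(z)f(x)$, that is, $f([x,z])=[f(x),f(z)]$. For the third equivalence I would simply specialize the first one, replacing the element~$y$ by $1+y\in R$: then $z$ is replaced by $1+z$ and $x^2+yx$ by $x^2+(1+y)x$, so the first equivalence, applied to this new data, says that $([x,1+y],x^2+(1+y)x)$ is summable if and only if $f([x,1+z])=[f(x),f(1+z)]$ for every brachymorphism~$f$. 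Finally $[x,1+z]=[x,z]$, and since $f(1+z)=1+f(z)$ we also have $[f(x),f(1+z)]=[f(x),1+f(z)]=[f(x),f(z)]$, so the last condition collapses to $f([x,z])=[f(x),f(z)]$, which is exactly the criterion obtained from the first equivalence.

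I do not anticipate a genuine obstacle here: the argument is pure bookkeeping with the identities $[x,y]+yx=xy$, $[x,y]+(x^2+yx)=xz$, $[x,z]=[x,y]$, $[x,1+z]=[x,z]$ and the two brachymorphism axioms. The one point deserving a line of care is that ``summable'' is quantified over all target rings~$S$ and all brachymorphisms $f\colon R\to S$ simultaneously; but since each equivalence ``$f(\text{sum})=f(\cdot)+f(\cdot)$'' $\Leftrightarrow$ ``$f([x,\cdot])=[f(x),f(\cdot)]$'' is established for one fixed~$f$ at a time, the universal quantifier passes through transparently, and there is no need to invoke Lemma~\ref{L:CharAdd} or to exhibit an explicit summability formula.
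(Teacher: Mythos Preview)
Your proposal is correct and follows essentially the same approach as the paper: part~\eqref{[xy]yx} is derived from the identity $xy=[x,y]+yx$ together with multiplicativity of~$f$, and part~\eqref{[xz]zx} is obtained by applying~\eqref{[xy]yx} to the pair $(x,z)$ (noting $([x,y],x^2+yx)=([x,z],zx)$) and then to $(x,1+z)$, using $[x,1+z]=[x,z]$ and $f(1+z)=1+f(z)$. Your write-up simply spells out a few more steps than the paper's terse sketch.
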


\begin{proof}
\emph{Ad}~\eqref{[xy]yx}.
A straightforward calculation, based on the equation $xy=[x,y]+yx$.

\emph{Ad}~\eqref{[xz]zx}.
By~\eqref{[xy]yx}, the pair $([x,y],x^2+yx)=([x,z],zx)$ is summable if{f} $f([x,z])=[f(x),f(z)]$ for every brachymorphism~$f$ of domain~$R$.
Apply that observation to the triple $(x,1+y,1+z)$, using the relations $[x,z]=[x,1+z]$ and $[f(x),f(z)]=[f(x),1+f(z)]$.
\end{proof}

Tweaking the $\cS$-terms~$\vp_1$, $\vq_1$ introduced in Notation~\ref{Not:vpvq}, we set

\begin{notation}\label{Not:vpvq2}
We introduce $\cS$-terms~$\vp_2$ and~$\vq_2$ as follows:
 \begin{align*}
 \vp_2(x,y,z)&\eqdef(zx)'(yz)'\,;\\
 \vq_2(x,y,z)&\eqdef(z(xy)'z)'\,. 
 \end{align*}
 \end{notation}
 
The brachynomials associated to~$\vp_2$ and~$\vq_2$ are thus the following:
 \begin{align*}
 \tilde{\vp}_2(x,y,z)&=(1+zx)(1+yz)\,;\\
 \tilde{\vq}_2(x,y,z)&=1+z(1+xy)z\,. 
 \end{align*}
As a straightforward calculation shows, the brachynomials~$\tilde{\vp}_2$ and~$\tilde{\vq}_2$ are related through the following equation, valid in any ring:
 \begin{equation}\label{Eq:tildep-q2}
 \tilde{\vp}_2(x,y,z)+[x,z]=\tilde{\vq}_2(x,y,z)+(x+y-z)z\,.
 \end{equation}

\begin{theorem}\label{T:z=x+y}
Let~$x$ and~$y$ be elements in a ring~$R$.
If all pairs $([x,y],x^2+yx)$, $([x,y],\tilde{\vp}_2(x,y,x+y))$, and $([x,y],\tilde{\vp}_2(x,1+y,1+x+y))$ are summable, then the pair $(x,y)$ is summable.
In particular, if the element $[x,y]$ is addable, then the pair $(x,y)$ is summable.
\end{theorem}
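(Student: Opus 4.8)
The plan is to push the ``slide-rule'' identity~\eqref{Eq:tildep-q2} through an arbitrary brachymorphism $f\colon R\to S$, first at the triple $(x,y,z)$ with $z\eqdef x+y$ and then at the shifted triple $(x,1+y,1+z)$, and to combine the two resulting equations. Write $X\eqdef f(x)$, $Y\eqdef f(y)$, $Z\eqdef f(z)$; since $f$ preserves the successor and the product, we have $f(\tilde{\vp}_2(a,b,c))=\tilde{\vp}_2(f(a),f(b),f(c))$ and $f(\tilde{\vq}_2(a,b,c))=\tilde{\vq}_2(f(a),f(b),f(c))$ for all $a,b,c\in R$.

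First I would note that in~$R$, because $z=x+y$ kills the term $(x+y-z)z$ in~\eqref{Eq:tildep-q2} and because $[x,z]=[x,y]$, that identity reduces to $[x,y]+\tilde{\vp}_2(x,y,z)=\tilde{\vq}_2(x,y,z)$. Applying~$f$ to this equality of elements of~$R$, using the summability of $([x,y],\tilde{\vp}_2(x,y,x+y))$ (the second hypothesis) on the left and $\cS$-term preservation throughout, yields $f([x,y])+\tilde{\vp}_2(X,Y,Z)=\tilde{\vq}_2(X,Y,Z)$. The first hypothesis, that $([x,y],x^2+yx)$ is summable, means by Lemma~\ref{L:f(addcomm)}\eqref{[xz]zx} that $f([x,z])=[f(x),f(z)]$; since $[x,z]=[x,y]$ this reads $f([x,y])=[X,Z]$, so the previous identity becomes $[X,Z]+\tilde{\vp}_2(X,Y,Z)=\tilde{\vq}_2(X,Y,Z)$. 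Now instance~\eqref{Eq:tildep-q2} in~$S$ at $(X,Y,Z)$ reads $\tilde{\vp}_2(X,Y,Z)+[X,Z]=\tilde{\vq}_2(X,Y,Z)+(X+Y-Z)Z$; comparing and cancelling $\tilde{\vq}_2(X,Y,Z)$ gives $(X+Y-Z)Z=0$.

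Next I would run the same three steps verbatim with $(x,1+y,1+z)$ in place of $(x,y,z)$, which is legitimate since $x+(1+y)=1+z$. Here $[x,1+z]=[x,z]=[x,y]$, so the only inputs needed are again $f([x,z])=[f(x),f(z)]$ — the content of the first hypothesis \emph{via} Lemma~\ref{L:f(addcomm)}\eqref{[xz]zx} — and the summability of $([x,1+y],\tilde{\vp}_2(x,1+y,1+z))$, which is literally the third hypothesis because $[x,1+y]=[x,y]$ and $1+z=1+x+y$. The argument then produces $(X+Y-Z)(1+Z)=0$. Subtracting the two conclusions gives $(X+Y-Z)\cdot 1=0$, i.e.\ $f(z)=f(x)+f(y)$; as $f$ was arbitrary, $(x,y)$ is summable. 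For the ``in particular'' clause: if $[x,y]$ is addable then every pair with first coordinate $[x,y]$ is summable, so all three hypotheses hold and the first part applies.

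All the manipulations are elementary; the one place that needs genuine care is the bookkeeping across the shift $y\mapsto 1+y$, $z\mapsto 1+z$ — verifying that Lemma~\ref{L:f(addcomm)}\eqref{[xz]zx} delivers exactly the commutator-preservation $f([x,y])=[f(x),f(y)]$ in both passes, and that the shifted $\tilde{\vp}_2$-pair coincides with the third hypothesis rather than with a new one. I expect no obstacle beyond this: the structural device — obtain $(X+Y-Z)Z=0$ and $(X+Y-Z)(1+Z)=0$, then subtract — is the same one already used in Lemma~\ref{L:pq2Add} and Theorem~\ref{T:IntClos}.
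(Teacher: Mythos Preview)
Your proof is correct and follows essentially the same route as the paper's: reduce~\eqref{Eq:tildep-q2} at $(x,y,z)$ with $z=x+y$, push it through~$f$ using the summability hypotheses and Lemma~\ref{L:f(addcomm)}\eqref{[xz]zx}, compare with~\eqref{Eq:tildep-q2} evaluated in~$S$ to get $(X+Y-Z)Z=0$, then repeat at $(x,1+y,1+z)$ and subtract. One small slip in your closing paragraph: the commutator preservation that Lemma~\ref{L:f(addcomm)}\eqref{[xz]zx} delivers is $f([x,y])=[f(x),f(z)]$ (i.e.\ $[X,Z]$), not $[f(x),f(y)]$ --- you use it correctly in the body, so this is just a typo in the summary.
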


\begin{proof}
Let~$f$ be a brachymorphism with domain~$R$ and set $z\eqdef x+y$.
The pair $([x,z],\tilde{\vp}_2(x,y,z))=([x,y],\tilde{\vp}_2(x,y,z))$ is, by assumption, summable.
Further, by Lemma~\ref{L:f(addcomm)}, $f([x,z])=[f(x),f(z)]$.
Hence,
 \begin{align*}
 \tilde{\vp}_2(f(x),f(y),f(z))+[f(x),f(z)]&=f\pI{\tilde{\vp}_2(x,y,z)}+f([x,z])\\
 &=f\pI{\tilde{\vp}_2(x,y,z)+[x,z]}\\
 &=f\pI{\tilde{\vq}_2(x,y,z)}&&(\text{apply~\eqref{Eq:tildep-q2}})\\
 &=\tilde{\vq}_2(f(x),f(y),f(z))\,.
 \end{align*}
On the other hand, by evaluating~\eqref{Eq:tildep-q2} at $(f(x),f(y),f(z))$, we get
 \[
 \tilde{\vp}_2(f(x),f(y),f(z))+[f(x),f(z)]=\tilde{\vq}_2(f(x),f(y),f(z))+(f(x)+f(y)-f(z))f(z)\,,
 \]
thus, subtracting from the above, we get
 \begin{equation}\label{Eq:fx+y-z2}
 (f(x)+f(y)-f(z))f(z)=0\,.
 \end{equation}
By Lemma~\ref{L:f(addcomm)}, together with our assumption, the argument above can be applied to the triple $(x,1+y,1+z)$, thus yielding
 \begin{equation}\label{Eq:fx+y-z2'}
 (f(x)+f(y)-f(z))(1+f(z))=0\,.
 \end{equation}
Subtracting~\eqref{Eq:fx+y-z2} from~\eqref{Eq:fx+y-z2'} yields $f(z)=f(x)+f(y)$.
\end{proof}

Further tweaking $\cS$-terms, we get the following curious observation.

\begin{theorem}\label{T:z+xzy}
A map $f\colon R\to\nobreak S$, between rings, is a ring homomorphism if{f}
 \begin{align}
 f(1+x)&=1+f(x)\,,\label{Eq:f(x+1)2}\\
 f(z+xzy)&=f(z)+f(x)f(z)f(y)\label{Eq:f(z+xzy)2}
 \end{align}
whenever $x,y\in R$ and $z\in\set{0,1,x+y}$.
\end{theorem}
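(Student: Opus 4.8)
The ``only if'' direction is immediate: a ring homomorphism satisfies~\eqref{Eq:f(x+1)2}, and for all $x,y,z\in R$ one has $f(z+xzy)=f(z)+f(xzy)=f(z)+f(x)f(z)f(y)$.

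For the ``if'' direction the plan is to split off the cases $z\in\set{0,1}$ of~\eqref{Eq:f(z+xzy)2} to recover that~$f$ is a brachymorphism, and then use the case $z=x+y$ to obtain additivity. For the first step, $z=0$ in~\eqref{Eq:f(z+xzy)2} gives $f(x)f(0)f(y)=0$ for all $x,y$, while $z=1$ gives $f(1+xy)=f(1)+f(x)f(1)f(y)$; combining the latter with $f(1+xy)=1+f(xy)$ and $f(1)=1+f(0)$ (both instances of~\eqref{Eq:f(x+1)2}) and cancelling the vanishing term $f(x)f(0)f(y)$ yields $f(xy)=f(0)+f(x)f(y)$. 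Then $x=y=0$ gives $f(0)^2=0$, and $y=0$ gives $f(x)f(0)=0$ for all~$x$, whence $0=f(1)f(0)=(1+f(0))f(0)=f(0)+f(0)^2=f(0)$; so $f(0)=0$, $f(1)=1$, $f$ is multiplicative, and thus a brachymorphism. In particular~$f$ now evaluates every brachynomial: $f(\tilde t(\vec a))=\tilde t(f(\vec a))$ for every $\cS$-term~$t$.

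For the second step, fix $x,y\in R$ and set $z\eqdef x+y$; the remaining case of~\eqref{Eq:f(z+xzy)2} reads $f(z+xzy)=f(z)+f(x)f(z)f(y)$. The element $z+xzy$ is not a brachynomial, but it is tied to one by the identity --- valid whenever $z=x+y$ ---
\[
z+xzy=z(1+xy)+[x,y]y
\]
(a one-line check: $z(1+xy)-(z+xzy)=(zx-xz)y=[z,x]y=-[x,y]y$). The plan is to feed this identity, together with its shifts obtained by replacing~$y$ by $1+y$ and~$x$ by $1+x$ (which turn~$z$ into $1+z$ while keeping the identity's shape and the same commutator $[x,y]$), into an argument patterned on the proof of Theorem~\ref{T:z=x+y}: using~\eqref{Eq:f(z+xzy)2}, the brachynomial evaluation rule above, and the fact that~$f$ respects the commutator at hand, one should reach $\bigl(f(x)+f(y)-f(z)\bigr)f(z)=0$ and $\bigl(f(x)+f(y)-f(z)\bigr)\bigl(1+f(z)\bigr)=0$, and then subtract.

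The main obstacle is the last ``fact'' invoked: one must extract from the hypotheses that $f([x,y])=[f(x),f(y)]$ for all $x,y$ --- equivalently, by Lemma~\ref{L:f(addcomm)}, that~$f$ is additive on the pair $([x,y],yx)$ --- since the commutator terms are the sole obstruction to pushing the brachynomial computation through. I would try to obtain this by specialising the case $z=x+y$ of~\eqref{Eq:f(z+xzy)2} to well-chosen pairs and recombining \emph{via} the displayed identity and its shifts; failing that, by checking directly that the three commutator-type pairs appearing in the hypothesis of Theorem~\ref{T:z=x+y} are handled additively by our~$f$, and then quoting that theorem.
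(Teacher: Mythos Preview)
Your first step (showing~$f$ is a brachymorphism) is correct and in fact a little cleaner than the paper's, which uses the specific substitutions $(x,y,z)=(0,0,0)$ and $(1,0,1)$ in~\eqref{Eq:f(z+xzy)2} to get $f(0)^3=0$ and $f(1)^2f(0)=0$, then argues from $f(1)=1+f(0)$.

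The second step, however, has a genuine gap. Your identity $z+xzy=z(1+xy)+[x,y]y$ is correct, but it trades the unknown $f(z+xzy)$ for a \emph{sum} of two terms, and to split~$f$ over that sum you would need precisely the additivity you are trying to prove (or at least summability of the pair $\bigl(z(1+xy),\,[x,y]y\bigr)$, which is no easier). You correctly flag that the commutator term is the obstacle, but the suggested remedies---specialising~\eqref{Eq:f(z+xzy)2} to extract $f([x,y])=[f(x),f(y)]$, or reducing to Theorem~\ref{T:z=x+y}---are not carried out, and it is not clear they can be: the hypothesis gives you control only over $f(z+xzy)$ for $z\in\{0,1,x+y\}$, and $[x,y]$ does not appear in any of those expressions in a way that lets you isolate it.

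The paper sidesteps the commutator issue entirely with a different identity: in any ring,
\[
1+(x+y+xzy)z=(1+xz)(1+yz)\,,
\]
which for $z=x+y$ reads $1+(z+xzy)z=(1+xz)(1+yz)$. The right-hand side is a brachynomial, so~$f$ evaluates it freely; the left-hand side is $1+f(z+xzy)f(z)$, and the hypothesis supplies $f(z+xzy)=f(z)+f(x)f(z)f(y)$. Comparing with the same identity evaluated at $(f(x),f(y),f(z))$ in~$S$ gives $\bigl(f(x)+f(y)-f(z)\bigr)f(z)=0$ directly---no commutator ever enters. Then the shift $(x,y,z)\mapsto(1+x,y,1+z)$ yields the companion equation and subtraction finishes. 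The key point you are missing is to look for an identity that packages $z+xzy$ \emph{multiplicatively} rather than additively.
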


\begin{proof}
We verify the nontrivial direction.
We shall invoke the following identity%
\footnote{
It is the particular case, for $n=2$, of a collection of identities, given in Cohn \cite[Lemma~3.1(ii)]{Cohn1963b}, relating \emph{continuant polynomials}.
}, similar to~\eqref{Eq:tildep-q1}:
 \begin{equation}\label{Eq:c3}
 1+(x+y+xzy)z=(1+xz)(1+yz)\,,
 \end{equation}
valid in every ring.
Now set $z\eqdef x+y$.
The equation~\eqref{Eq:c3} then specializes to
 \begin{equation}\label{Eq:c3bis}
 1+(z+xzy)z=(1+xz)(1+yz)\,.
 \end{equation}
Set $o\eqdef f(0)$ and $e\eqdef f(1)$.
Substituting $(0,0,0)$ then $(1,0,1)$ for $(x,y,z)$ in~\eqref{Eq:f(z+xzy)2}, we get, after cancellation,
 \[
 o^3=e^2o=0\,,
 \]
which, since $e=o+1$ (evaluate~\eqref{Eq:f(x+1)2} at~$0$) yields $o=0$, thus $e=1$.
Setting $z=1$ in~\eqref{Eq:f(z+xzy)2}, it follows that for all $x,y\in R$, $f(1+xy)=1+f(x)f(y)$, which, by~\eqref{Eq:f(x+1)2}, yields $f(xy)=f(x)f(y)$.
Therefore~$f$ is a brachymorphism.

Now let $x,y\in R$ and set $z\eqdef x+y$.
Applying the brachymorphism~$f$ to the equation~\eqref{Eq:c3bis} and using our assumption, we get
 \begin{equation}\label{Eq:c3ter}
 1+(f(z)+f(x)f(z)f(y))f(z)=(1+f(x)f(z))(1+f(y)f(z))\,.
 \end{equation}
On the other hand, evaluating the identity~\eqref{Eq:c3} at $(f(x),f(y),f(z))$, we get
 \[
 1+(f(x)+f(y)+f(x)f(z)f(y))f(z)=(1+f(x)f(z))(1+f(y)f(z))\,.
 \]
Owing to~\eqref{Eq:c3ter} and after cancellation, we get
 \[
 (f(x)+f(y)-f(z))f(z)=0\,,
 \]
thus also, applying that result to the triple $(1+x,y,1+z)$,
 \[
 (f(x)+f(y)-f(z))(1+f(z))=0\,,
 \]
and therefore, after cancellation, $f(x)+f(y)-f(z)=0$.
\end{proof}

Let us now reap a few consequences of Theorem~\ref{T:z=x+y}.

\begin{corollary}\label{C:z=x+y}
Let~$x$ be an element in a ring~$R$.
If all commutators $[x,y]$, for $y\in R$, are addable in~$R$, then~$x$ is addable in~$R$.
\end{corollary}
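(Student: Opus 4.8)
The plan is to reduce the statement directly to Theorem~\ref{T:z=x+y} by checking that, under the hypothesis, the three pairs listed there are summable for every $y\in R$. Fix $y\in R$ and set $z\eqdef x+y$. Theorem~\ref{T:z=x+y} asks us to show that the pairs $([x,y],x^2+yx)$, $\bigl([x,y],\tilde{\vp}_2(x,y,x+y)\bigr)$, and $\bigl([x,y],\tilde{\vp}_2(x,1+y,1+x+y)\bigr)$ are summable. The key point is that $[x,y]$ is addable \emph{by hypothesis}, and the second coordinate of each pair is a polynomial in $x,y$; so each pair is a pair of the form $(a,b)$ with $a=[x,y]$ addable, and hence summable by the definition of ``addable'' (Definition~\ref{D:Unamb}, via Proposition~\ref{P:S0S1}: an addable element forms a summable pair with every element of $R$).

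Concretely, first I would note that since $[x,y]\in\Add{R}$, the pair $([x,y],b)$ is summable for \emph{every} $b\in R$ --- this is immediate from the definition of addable element. Applying this with $b=x^2+yx$, with $b=\tilde{\vp}_2(x,y,z)=(1+zx)(1+yz)$, and with $b=\tilde{\vp}_2(x,1+y,1+z)=(1+(1+z)x)(1+(1+z)(1+y))$ (note $1+x+y=1+z$) settles all three summability hypotheses of Theorem~\ref{T:z=x+y}. That theorem then yields that $(x,y)$ is summable. Since $y\in R$ was arbitrary, $x$ is addable by Definition~\ref{D:Unamb}.

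There is essentially no obstacle here: the corollary is a clean specialization of Theorem~\ref{T:z=x+y}, exactly analogous to how the ``in particular'' clause of that theorem was derived. The only thing to be slightly careful about is the bookkeeping that $1+x+y$ coincides with $1+z$ so that the third pair really is of the stated form $([x,y],\tilde{\vp}_2(x,1+y,1+x+y))$ with addable first coordinate, but this is purely notational. So the proof is short: invoke that $[x,y]$ addable makes every pair $([x,y],b)$ summable, feed the three required values of $b$ into Theorem~\ref{T:z=x+y}, and conclude.
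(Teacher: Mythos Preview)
Your proof is correct and follows exactly the paper's approach: the corollary is stated without proof in the paper because it is an immediate consequence of the ``in particular'' clause of Theorem~\ref{T:z=x+y}, which you are essentially re-deriving. (One inconsequential slip: $\tilde{\vp}_2(x,1+y,1+z)=(1+(1+z)x)\bigl(1+(1+y)(1+z)\bigr)$, not $(1+(1+z)(1+y))$, but as you note only its membership in~$R$ matters.)
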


By Proposition~\ref{P:S0S1} and Corollary~\ref{C:x^n2xAdd}, we thus obtain the following.

\begin{corollary}\label{C:CommIdp}
Every ring in which every commutator is a sum of nilpotent elements is addable.
\end{corollary}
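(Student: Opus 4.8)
The plan is to combine the three preservation results already established: the fact that nilpotent elements are addable, the fact that the addable kernel is closed under addition, and Corollary~\ref{C:z=x+y} (addability of $x$ follows from addability of all commutators $[x,y]$).

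First I would fix an arbitrary element $x\in R$ and an arbitrary $y\in R$, and write the commutator $[x,y]$ as a finite sum $\sum_{i=1}^k n_i$ of nilpotent elements $n_1,\dots,n_k\in R$, which is possible by hypothesis. By Corollary~\ref{C:x^n2xAdd}, each $n_i$ is addable, i.e.\ $n_i\in\Add{R}$. Since $\Add{R}$ is an additive subgroup of~$R$ by Proposition~\ref{P:S0S1}, the sum $\sum_{i=1}^k n_i=[x,y]$ lies in $\Add{R}$ as well; that is, $[x,y]$ is addable.

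Since this holds for every $y\in R$, Corollary~\ref{C:z=x+y} applies and yields that $x$ is addable. As $x\in R$ was arbitrary, $R=\Add{R}$, i.e.\ $R$ is addable.

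There is no real obstacle here: the statement is a short corollary obtained by chaining Corollary~\ref{C:x^n2xAdd}, Proposition~\ref{P:S0S1}, and Corollary~\ref{C:z=x+y}, exactly as foreshadowed by the sentence preceding the statement. The only point to be mildly careful about is that ``sum of nilpotent elements'' is meant in the additive-group sense (finite sums, with the summands genuinely nilpotent, not merely quasi-regular), so that Corollary~\ref{C:x^n2xAdd} indeed applies to each summand; once that is granted, the argument is immediate.
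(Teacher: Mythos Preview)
Your proof is correct and follows exactly the route indicated in the paper: use Corollary~\ref{C:x^n2xAdd} to get addability of each nilpotent summand, Proposition~\ref{P:S0S1} to pass to the sum, and then Corollary~\ref{C:z=x+y} to conclude. There is nothing to add.
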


In particular, it follows from Gardella and Thiel~\cite[Theorem~5.3]{GarThi2023} that the assumption of Corollary~\ref{C:CommIdp} is satisfied by all \emph{zero-product balanced rings} as defined in~\cite{GarThi2023}.
By \cite[Proposition~3.6]{GarThi2023}, the class of such rings includes all rings generated by their idempotents.
In \cite[Example~3.7]{GarThi2023} (crediting Bre\v{s}ar~\cite{Bres2007}), it is observed that that class includes, in turn, the class of all simple rings with a nontrivial idempotent, or more generally the class of all rings~$R$ containing an idempotent~$e$ such that $ReR$ and $R(1-e)R$ both generate~$R$ as an additive subgroup.
For the former class, Corollary~\ref{C:CommIdp} is superseded by Corollary~\ref{C:SimpleIdp}.

\begin{corollary}\label{C:TriangMatdet}
Let~$n$ be a positive integer and let~$K$ be a commutative ring.
Then every ring~$R$ of $n\times n$ lower triangular matrices over~$K$ is addable.
\end{corollary}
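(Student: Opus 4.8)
The plan is to reduce the statement to Corollary~\ref{C:CommIdp} by showing that every commutator in~$R$ is nilpotent; the commutativity of~$K$ enters exactly once, at that point, and that is also the only step involving any computation.

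First I would record the elementary observation that a commutator of two elements of~$R$ is strictly lower triangular. Write $a=(a_{ij})$ and $b=(b_{ij})$ for elements of~$R$, so $a_{ij}=b_{ij}=0$ whenever $i<j$. The product of two lower triangular matrices is lower triangular, so $[a,b]=ab-ba$ is lower triangular; and for the diagonal, $(ab)_{ii}=\sum_k a_{ik}b_{ki}$ has a possibly nonzero summand only when $i\ge k$ and $k\ge i$, i.e.\ $k=i$, so $(ab)_{ii}=a_{ii}b_{ii}$. Since~$K$ is commutative this equals $b_{ii}a_{ii}=(ba)_{ii}$, whence every diagonal entry of $[a,b]$ vanishes.

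Next I would note that a strictly lower triangular $n\times n$ matrix~$N$ satisfies $N^n=0$ (each multiplication by~$N$ pushes the support strictly below one further subdiagonal), so every commutator $[a,b]$ with $a,b\in R$ is nilpotent, hence trivially a sum of nilpotent elements of~$R$.

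Finally I would invoke Corollary~\ref{C:CommIdp}: a ring in which every commutator is a sum of nilpotent elements is addable, so $R=\Add{R}$. Equivalently, Corollary~\ref{C:x^n2xAdd} makes each nilpotent commutator addable, and then Corollary~\ref{C:z=x+y} upgrades ``every $[x,y]$ is addable'' to ``every $x\in R$ is addable''. I do not anticipate any real obstacle: the last two steps are immediate, and the first is just the standard fact that commutators of triangular matrices over a commutative base ring are strictly triangular --- which is precisely what would break down over a noncommutative base.
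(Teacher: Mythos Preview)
Your proposal is correct and essentially identical to the paper's proof: the paper observes that every commutator $[a,b]$ in~$R$ is strictly lower triangular (hence $[a,b]^n=0$), applies Corollary~\ref{C:x^n2xAdd} to get addability of $[a,b]$, and concludes via Corollary~\ref{C:z=x+y}. Your route through Corollary~\ref{C:CommIdp} is just the packaged form of the same argument, as you yourself note.
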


\begin{proof}
For all $a,b\in R$, the commutator~$[a,b]$ is a lower triangular matrix with~$0$ on the diagonal; thus $[a,b]^n=0$.
By Corollary~\ref{C:x^n2xAdd}, $[a,b]$ is addable.
Since this holds for all $a,b\in R$, the desired conclusion follows from Corollary~\ref{C:z=x+y}.
\end{proof}

\begin{corollary}\label{C:M2(comm)}
Every ring of $2\times2$ matrices over a commutative ring is addable.
\end{corollary}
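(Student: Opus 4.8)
The plan is to reduce, by way of Corollary~\ref{C:z=x+y}, to proving that \emph{every commutator in~$R$ is addable}, and then to exploit the special behaviour of trace-zero $2\times2$ matrices. So let~$R$ be a subring of~$\Mat{2}{K}$ with~$K$ commutative, and let $a,b\in R$. First I would observe that the commutator $c\eqdef ab-ba=[a,b]$ has zero trace, so that the Cayley--Hamilton theorem gives $c^2=\operatorname{tr}(c)\,c-\det(c)\,I=-\det(c)\,I$, a scalar matrix. Since $c\in R$ and~$R$ is closed under multiplication, also $c^2\in R$; and since a scalar matrix commutes with every element of~$\Mat{2}{K}$, the element~$c^2$ lies in the center of~$R$.

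Now Corollary~\ref{C:pq2Add}\eqref{Cen2Add} says that the center of~$R$ is addable, so~$c^2$ is addable in~$R$, and then Corollary~\ref{C:x^n2xAdd} (applied with $n=2$) gives that $c=[a,b]$ is addable in~$R$. As~$a$ and~$b$ are arbitrary, every commutator of~$R$ is addable; in particular, for each fixed $x\in R$, all commutators $[x,y]$ with $y\in R$ are addable, so Corollary~\ref{C:z=x+y} yields that~$x$ is addable. Hence $R=\Add{R}$, that is, $R$ is addable.

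I do not expect any real obstacle: the argument is a short chain of results already established above, the only new ingredient being the elementary remark that the square of a trace-zero $2\times2$ matrix is a scalar matrix. The one point worth stating carefully is that ``$c^2$ is central in~$R$'' is to be understood in the abstract ring~$R$, which is automatic here because~$c^2$ is literally a scalar matrix and hence commutes with everything in the overring~$\Mat{2}{K}\supseteq R$; no hypothesis on~$R$ (such as containing all scalar matrices, as in Corollary~\ref{C:MatnKAlg}) is needed.
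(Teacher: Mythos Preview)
Your proof is correct and follows essentially the same route as the paper's: both arguments observe that the square of any commutator in~$R$ is central (the paper phrases this as Hall's identity $(xy-yx)^2z=z(xy-yx)^2$, while you derive it directly from Cayley--Hamilton), deduce that every commutator is addable, and then invoke Corollary~\ref{C:z=x+y}. The only cosmetic difference is that the paper bundles your appeal to Corollaries~\ref{C:pq2Add}\eqref{Cen2Add} and~\ref{C:x^n2xAdd} into a single citation of Corollary~\ref{C:abs2z=x+y}.
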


\begin{proof}
Every such ring satisfies Hall's identity $(xy-yx)^2z=z(xy-yx)^2$ (saying that every commutator has central square).
By Corollary~\ref{C:abs2z=x+y}, it follows that every commutator is addable.
Apply Corollary~\ref{C:z=x+y}.
\end{proof}

For $3\times3$ matrices over a commutative ring, we got only a partial answer so far, namely the case where the brachymorphism in question is the determinant function (Theorem~\ref{T:deton3x3}).
Assuming a sufficient supply of scalar units in our ring, the desired result holds for $n\times n$ matrices (Theorem~\ref{T:deta+1}).

For further results on rings of square matrices over commutative rings, see Section~\ref{S:det}.
Surprisingly, not every finite ring can be represented in this way:
Bergman~\cite{Berg74a} observes that for every prime number~$p$, the endomorphism ring of $(\ZZ/p^2\ZZ)\oplus(\ZZ/p\ZZ)$ is a finite ring that cannot be embedded into any full matrix ring over a commutative ring.

\begin{corollary}\label{C:R/Iadd}
Let~$I$ be a two-sided ideal in a ring~$R$.
If every element of~$I$ is addable in~$R$ and if $R/I$ is commutative, then~$R$ is addable.
\end{corollary}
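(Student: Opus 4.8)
The plan is to reduce the addability of~$R$ to a statement about commutators and then invoke Corollary~\ref{C:z=x+y}. Concretely, I want to show that every element of~$R$ lies in~$\Add{R}$, and Corollary~\ref{C:z=x+y} tells us that an element~$x$ of~$R$ is addable as soon as every commutator~$[x,y]$, with~$y$ ranging over~$R$, is addable in~$R$. So the whole proof boils down to checking that all commutators of~$R$ are addable.

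The key observation is that the hypothesis ``$R/I$ is commutative'' is precisely the statement that $[x,y]=xy-yx$ maps to~$0$ under the quotient map $R\twoheadrightarrow R/I$, i.e.\ $[x,y]\in I$ for all $x,y\in R$. Since by assumption every element of~$I$ is addable in~$R$, we conclude that every commutator $[x,y]$ is addable in~$R$.

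Now fix an arbitrary $x\in R$. By the previous paragraph, all commutators $[x,y]$ (for $y\in R$) are addable in~$R$, so Corollary~\ref{C:z=x+y} yields that~$x$ is addable in~$R$. As~$x$ was arbitrary, $R=\Add{R}$, that is, $R$ is addable.

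There is essentially no obstacle here: the statement is a direct consequence of Corollary~\ref{C:z=x+y} once one notices that commutativity of~$R/I$ forces all commutators into~$I$. The only point worth stating carefully is that ``every element of~$I$ is addable in~$R$'' is genuinely used for the \emph{specific} elements~$[x,y]$, and that these need not exhaust~$I$; no assumption is made that~$I$ is generated by commutators.
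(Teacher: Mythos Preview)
Your proof is correct and follows essentially the same approach as the paper's own proof: observe that commutativity of~$R/I$ forces all commutators into~$I$, hence they are addable, and then apply Corollary~\ref{C:z=x+y}.
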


\begin{proof}
Since~$R/I$ is commutative, all commutators from~$R$ belong to~$I$, thus, by assumption, they are addable.
By Corollary~\ref{C:z=x+y}, every element of~$R$ is addable.
\end{proof}

Other consequences of Theorem~\ref{T:z=x+y} are extensions of Corollary~\ref{C:pq2Add}\eqref{Comm2Add} to weaker identities than commutativity:

\begin{corollary}\label{C:NoM2Id}
Let a ring~$R$ satisfy a polynomial identity~$\vE$ satisfied neither by any non-commutative division ring nor by~$\Mat{2}{\kk}$ for any field~$\kk$.
Then~$R$ is addable.
\end{corollary}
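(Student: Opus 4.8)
The plan is to reduce the statement to the claim that, under the hypotheses, every commutator of~$R$ is nilpotent, and then to quote Corollary~\ref{C:CommIdp}. Since the prime radical~$N$ of~$R$ (the intersection of all prime ideals of~$R$) is a nil ideal, it suffices to prove that $[R,R]\subseteq N$, equivalently that $R/P$ is commutative for every prime ideal~$P$. So I fix a prime ideal~$P$ and set $\bar R\eqdef R/P$, a prime ring which again satisfies the polynomial identity~$\vE$.

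The core of the argument is the structure theory of prime PI rings. By Posner's theorem (in the form found, e.g., in Rowen's treatment of polynomial identities), $\bar R$ embeds into its classical ring of quotients~$Q$, which is a finite-dimensional central simple algebra over a field $F=Z(Q)$, and~$Q$ inherits the identity~$\vE$; write $Q\cong\Mat{n}{D}$ for a division algebra~$D$ of finite dimension over~$F$. If $n\geq 2$, then $\Mat{2}{F}$ sits inside $\Mat{n}{F}\subseteq\Mat{n}{D}=Q$ as a subring, so $\Mat{2}{F}$ satisfies~$\vE$ (an identity evaluates the same way in a subring as in the whole ring), contradicting the hypothesis on $\Mat{2}{\kk}$. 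Hence $n=1$, so $Q=D$ is a division ring satisfying~$\vE$; by the hypothesis on noncommutative division rings, $D$ must be commutative, and therefore so is its subring~$\bar R$. Thus $[R,R]\subseteq N$, every commutator is nilpotent, and Corollary~\ref{C:CommIdp} shows that~$R$ is addable.

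I expect the main obstacle to be the verification that the ring of quotients~$Q$ inherits the \emph{given} identity~$\vE$ and not merely a multilinearization of it (which could well hold in $\Mat{2}{F}$). Since~$Q$ is a central localization of~$\bar R$, a Vandermonde argument on the homogeneous components of~$\vE$ settles this whenever $Z(\bar R)$ is infinite; and when $Z(\bar R)$ is finite, $\bar R$ is a finite-dimensional algebra over a finite field, hence --- being prime --- a finite matrix ring over a finite field, so that the same dichotomy on~$n$ applies directly. Notably, this argument never passes to the algebraic closure of~$F$, which is exactly what allows the two hypotheses on~$\vE$ (concerning $\Mat{2}{\kk}$ and division rings) to be used verbatim.
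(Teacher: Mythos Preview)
Your argument reaches the conclusion but by a genuinely different route from the paper's. The paper follows Herstein's original argument via the \emph{Jacobson} radical: every primitive quotient of~$R$ is shown, using only the density theorem, to be a field, so $R/\Jac(R)$ is commutative; hence all commutators lie in~$\Jac(R)$, which is addable by Corollary~\ref{C:JacRad}, and Corollary~\ref{C:z=x+y} finishes. You instead go through the \emph{prime} radical, showing via Posner's theorem that every prime quotient is commutative, whence every commutator is nilpotent, and you conclude with Corollary~\ref{C:CommIdp}. Your route yields a stronger intermediate fact ($[R,R]$ is nil, not merely contained in~$\Jac(R)$) at the price of invoking Posner rather than the more elementary density argument; the paper's route needs no PI structure theory at all.

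One point you should not skip: Posner's theorem, in any standard formulation including Rowen's, requires~$\bar R$ to satisfy a \emph{monic} identity, and the hypotheses on~$\vE$ only force its integer coefficients to be coprime (since each $\Mat{2}{\FF_p}$ fails~$\vE$), not that one of them equal~$\pm1$. The repair is routine but should be stated: a prime ring has characteristic~$0$ or a prime~$p$; in characteristic~$p$ the reduction of~$\vE$ modulo~$p$ is nonzero (otherwise~$\Mat{2}{\FF_p}$ would satisfy~$\vE$), so~$\bar R$ is a PI $\FF_p$-algebra and Posner applies directly; in characteristic~$0$, $\bar R$ is torsion-free and one applies Posner to the prime $\QQ$-algebra $\bar R\otimes_{\ZZ}\QQ$ instead. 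With this adjustment, the remainder of your argument (including your careful handling of whether~$Q$ inherits~$\vE$ itself and not just a multilinearization) goes through.
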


\begin{proof}
The argument is the one used by Herstein in his proof of \cite[Theorem~1]{Hers1961}; we sketch it here for convenience.
The necessary background can be found in Lam \cite[Ch.~11]{Lam2001}.
If~$R$ is a division ring, then, since it satisfies~$\vE$, it is commutative.
If~$R$ is primitive then it has to be a division ring (thus, by the above, a field), otherwise some~$\Mat{2}{\kk}$, with~$\kk$ a field, would be a homomorphic image of a subring of~$R$, thus it would satisfy~$\vE$.
Hence, if~$R$ is semiprimitive then it is a subdirect product of fields, thus it is commutative.
Therefore, in the general case $R/{\Jac(R)}$ is commutative, which means that every commutator from~$R$ belongs to~$\Jac(R)$.
By Corollary~\ref{C:JacRad}, $\Jac(R)$ is addable in~$R$.
Apply Corollary~\ref{C:z=x+y}.
\end{proof}

Herstein proves in~\cite{Hers1961} that both polynomial identities $(xy)^n=x^ny^n$ and\linebreak $(x+y)^n=x^n+y^n$, where~$n$ is an integer greater than~$1$, satisfy the assumption of Corollary~\ref{C:NoM2Id}; hence any ring satisfying any of those is addable.
Focusing on the former identity, we obtain that the power function $x\mapsto x^n$, on any ring, cannot be a counterexample to our main conjecture.
More precisely:

\begin{corollary}\label{C:(x+1)^n}
For every positive integer~$n$, if a ring~$R$ satisfies the identity
 \begin{equation}
 (xy)^n=x^ny^n\,,\label{Eq:(xy)^n}
 \end{equation}
then it is addable.
In particular, if~$R$ also satisfies the identity
 \begin{equation}\label{Eq:(x+1)^n}
 (1+x)^n=1+x^n\,,
 \end{equation}
then it further satisfies the identity
 \begin{equation}\label{Eq:(x+y)^n}
(x+y)^n=x^n+y^n\,.
\end{equation}
\end{corollary}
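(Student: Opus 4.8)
The plan is to obtain both assertions as essentially immediate consequences of Corollary~\ref{C:NoM2Id} together with the classical theorem of Herstein~\cite{Hers1961} quoted in the paragraph preceding the statement. The first step is to set aside the degenerate value $n=1$, where \eqref{Eq:(xy)^n}, \eqref{Eq:(x+1)^n} and \eqref{Eq:(x+y)^n} all reduce to tautologies, and to assume $n\geq 2$ from that point on. For such~$n$ the element $(xy)^n-x^ny^n$ is nonzero in the free ring~$\ZZ\seq{x,y}$ (the monomials $(xy)^n$ and $x^ny^n$ being distinct once $n\geq 2$), so \eqref{Eq:(xy)^n} is a bona fide polynomial identity, and Herstein's theorem supplies exactly what Corollary~\ref{C:NoM2Id} requires: \eqref{Eq:(xy)^n} holds in no non-commutative division ring and in no~$\Mat{2}{\kk}$ with~$\kk$ a field. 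Invoking Corollary~\ref{C:NoM2Id} then yields that~$R$ is addable; this is the first assertion, and it carries essentially all the weight.

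For the ``in particular'' clause I would feed the power map straight into the definition of addability. Granting in addition \eqref{Eq:(x+1)^n}, consider the map $f\colon R\to R$, $x\mapsto x^n$: identity \eqref{Eq:(x+1)^n} is precisely the successor condition $f(1+x)=1+f(x)$, and identity \eqref{Eq:(xy)^n} is precisely the multiplicativity $f(xy)=f(x)f(y)$, so~$f$ is a brachymorphism with domain~$R$. Since~$R$ is addable by the first part, $f$ is additive, that is, $(x+y)^n=f(x+y)=f(x)+f(y)=x^n+y^n$ for all $x,y\in R$, which is \eqref{Eq:(x+y)^n}.

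What I would flag is that this second identity genuinely cannot be reached by brute expansion of~$(x+y)^n$ --- non-commutativity leaves uncancellable mixed monomials --- so the detour through addability is a necessity, not a shortcut; and, relatedly, that the restriction $n\geq 2$ is essential, since for $n=1$ the assertion ``$R$ is addable'' is nothing but the main conjecture. Beyond those two points there is no real obstacle inside this corollary: everything substantive has already been done in Corollary~\ref{C:NoM2Id} and in Herstein's cited work, and what is left is only the short formal manipulation above.
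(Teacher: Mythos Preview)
Your argument is correct and matches the paper's own reasoning, which is left implicit in the text surrounding the corollary: Herstein's theorem feeds into Corollary~\ref{C:NoM2Id} to give addability, and then the power map $x\mapsto x^n$ is a brachymorphism on~$R$, hence additive. Your explicit treatment of the case $n=1$ is a genuine improvement over the paper, which states the corollary for ``every positive integer~$n$'' but whose argument (via Herstein, who assumes $n>1$) only covers $n\geq 2$; as you note, the first assertion for $n=1$ is precisely the main conjecture.
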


Hence \emph{if the power function $x\mapsto x^n$ on~$R$ is a brachymorphism, then it is additive}.
This applies, in particular, to powers of the \emph{Frobenius map} $\gf\colon x\mapsto x^p$ on any ring of characteristic~$p$: hence, \emph{a power of~$\gf$ is a ring homomorphism if{f} it is a multiplicative homomorphism}.

Setting $[x,y]_1\eqdef[x,y]=xy-yx$ and $[x,y]_{n+1}\eqdef[[x,y]_n,y]$, a ring is \emph{Engelian} if it satisfies the identity $[x,y]_n=0$ for some~$n$, and \emph{locally Engelian} if every finitely generated subring is Engelian.

\begin{corollary}\label{C:Engel}
Every locally Engelian ring is addable.
\end{corollary}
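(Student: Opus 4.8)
The plan is to reduce the statement to Corollary~\ref{C:z=x+y} by way of Corollary~\ref{C:ClosAdd}\eqref{Colim2Add}, exploiting the fact that ``Engelian'' is a finitary condition and that any finitely generated piece of a locally Engelian ring is Engelian. First I would observe that every ring is the directed colimit of its finitely generated subrings, with the inclusion maps as the transition homomorphisms; if~$R$ is locally Engelian, each of these subrings is Engelian. So by Corollary~\ref{C:ClosAdd}\eqref{Colim2Add} it suffices to treat the case where~$R$ itself is Engelian, i.e.\ satisfies $[x,y]_n=0$ for some fixed~$n$.

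For an Engelian ring~$R$, fix $x,y\in R$. The key point is that the iterated commutator identity $[x,y]_n=0$ expresses that $[x,y]$, multiplied on the right by the ``adjoint'' map $\mathrm{ad}_y\colon t\mapsto[t,y]$ applied $n-1$ more times, vanishes; in particular $[x,y]$ is ``ad-nilpotent'' over~$y$. I want to conclude that $[x,y]$ is addable. The cleanest route is to show that $[x,y]$ is itself nilpotent, or at least that its relevant powers behave well, and then invoke Corollary~\ref{C:x^n2xAdd} (nilpotents are addable) together with Corollary~\ref{C:z=x+y}. Concretely: set $c\eqdef[x,y]$; from $[c,y]_{n-1}=0$ and the Leibniz-type expansion of iterated commutators, one extracts that some power of~$c$ lies in an ideal on which everything is controlled — but the slick fact, due to the theory of Engel conditions on (non-associative or Lie) rings, is that $[x,y]_n=0$ for all pairs forces $c=[x,y]$ to be nilpotent with a bound depending only on~$n$. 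Granting that, Corollary~\ref{C:x^n2xAdd} makes every commutator~$[a,b]$ of~$R$ addable, and then Corollary~\ref{C:z=x+y} makes every element of~$R$ addable, i.e.\ $R$ is addable.

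The main obstacle is the step asserting that the Engel identity $[x,y]_n=0$ (for all $x,y$) implies that each commutator $[x,y]$ is nilpotent. This is not a triviality: it is essentially a Zel'manov-type / Higgins-type result about Engel conditions, and in the purely ring-theoretic (not Lie-algebra) setting one must be careful about what is actually being assumed. The paper has presumably set things up — perhaps via a reference to Riley~\cite{Riley2017} or to a standard source on PI-rings — so that ``Engelian'' already carries the consequence that commutators are nil (indeed Riley's result quoted in the introduction, that some power of the Frobenius map is a homomorphism on an Engelian ring, is of exactly this flavour). If the nilpotence bound is uniform, the argument is immediate as above; if one only gets that each individual commutator is nilpotent (with the exponent depending on the commutator), that is still enough, since Corollary~\ref{C:x^n2xAdd} applies to each commutator separately and Corollary~\ref{C:z=x+y} only needs every commutator $[x,y]$, $y\in R$, to be addable.

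A fallback, avoiding any delicate nilpotence theory, is to try to verify the summability hypotheses of Theorem~\ref{T:z=x+y} directly from the Engel identity: namely that the pairs $([x,y],x^2+yx)$, $([x,y],\tilde{\vp}_2(x,y,x+y))$, and $([x,y],\tilde{\vp}_2(x,1+y,1+x+y))$ are summable. By Lemma~\ref{L:f(addcomm)} the first of these is equivalent to $f([x,y])=[f(x),f(y)]$ for every brachymorphism~$f$, and one would hope to derive this commutator-preservation from the fact that~$f$ preserves the iterated-commutator term $[x,y]_n$ and from $[x,y]_n=0$ — but pushing this through seems to require exactly the same kind of nilpotence input, so I expect the first route, reducing to Corollary~\ref{C:x^n2xAdd} via nilpotence of commutators in Engelian rings, to be both the intended one and the shortest.
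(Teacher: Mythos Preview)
Your reduction via Corollary~\ref{C:ClosAdd}\eqref{Colim2Add} to the finitely generated Engelian case matches the paper. After that the paper proceeds differently, and your version has a genuine gap.

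The paper cites Riley and Wilson \cite[Proposition~2]{RilWil1999}: for a finitely generated Engelian ring~$R$, the \emph{ideal} lower central series
\[
\gc^1(R)=R,\qquad
\gc^{k+1}(R)=\text{two-sided ideal generated by }\setm{[x,y]}{x\in\gc^k(R),\ y\in R}
\]
reaches~$0$ in finitely many steps. A one-line downward induction on~$k$, using only Corollary~\ref{C:z=x+y}, then shows that every element is addable: if everything in~$\gc^{k+1}(R)$ is already addable and $x\in\gc^k(R)$, then $[x,y]\in\gc^{k+1}(R)$ is addable for every~$y$, hence~$x$ is addable. Ring-nilpotence of individual commutators is never invoked.

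Your key step --- that each $[x,y]$ is nilpotent as a ring element --- is asserted but not established. The Engel identity $[x,y]_n=0$ only says that $[x,y]$ is $\mathrm{ad}_y$-nilpotent, not ring-nilpotent, and the ``Zel'manov-type / Higgins-type'' gesture does not pin down a usable statement or reference. Even granting Riley--Wilson, passing from termination of the $\gc$-series to ring-nilpotence of commutators would require a further argument (for instance one can check that $\gc^3(R)=0$ forces $[x,y]^2=0$, since then $[x,y]x\in\gc^2(R)$ is central, whence $[x,y]xy=[x,y]yx$; but higher classes are not immediate). So your route, even if it can be completed, would pass through the same Riley--Wilson citation and then do extra work that the paper's downward induction on the~$\gc^k$ simply bypasses: that induction needs only the tautology $[x,y]\in\gc^{k+1}(R)$ whenever $x\in\gc^k(R)$.
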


\begin{proof}
By Corollary~\ref{C:ClosAdd}\eqref{Colim2Add}, it suffices to verify that every finitely generated Engelian ring~$R$ is addable.
Set $\gc^1(R)\eqdef R$, and define each $\gc^{n+1}(R)$ as the two-sided ideal of~$R$ generated by the commutators $[x,y]$ where $x\in\gc^n(R)$ and $y\in R$.
By Riley and Wilson \cite[Proposition~2]{RilWil1999}, $\gc^n(R)=0$ for some nonnegative integer~$n$.
Then a straightforward downward induction argument (over the largest~$k\leq n$ such that $x\in\gc^k(R)$), based on Corollary~\ref{C:z=x+y}, shows that every $x\in R$ is addable.
\end{proof}

\section{Monoid algebras and Weyl algebras}
\label{S:Weyl}

In this section we illustrate a few of our earlier results by verifying additivity in a few nontrivial cases, namely certain monoid rings (Theorem~\ref{T:KMreg}) and Weyl algebras in positive characteristic (Theorem~\ref{T:Weyl1}).

\begin{theorem}\label{T:KMreg}
Let~$K$ be a commutative ring and let~$M$ be a monoid.
We set
 \begin{align*}
 \ga_0(M)&\eqdef\setm{u\in M}{(\forall x\in M)(ux=xu)}
 \qquad(\text{the center of }M)\,,\\
 \ga_{n+1}(M)&\eqdef\{v\in M\mid
 v^m=x_1\cdots x_kuy_1\cdots y_l\\
 &\qquad\text{for some }m>0\,,\ 
 u\in\ga_n(M)\,,\ \text{all }x_i,y_j\text{regular}\}\quad
 \text{whenever }n\geq0\,.
 \end{align*}
If $M=\bigcup_n\ga_n(M)$, then the monoid ring~$K[M]$ is addable.
\end{theorem}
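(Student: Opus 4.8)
The plan is to show that every element of the form $\lambda g$, with $\lambda\in K$ and $g\in M$, lies in $\Add{K[M]}$. Since every element of $K[M]$ is a finite sum of such elements and since $\Add{K[M]}$ is an additive subgroup of $K[M]$ by Proposition~\ref{P:S0S1}, this immediately gives $K[M]=\Add{K[M]}$, i.e., $K[M]$ is addable. Two elementary remarks will be used throughout: $K$ embeds, \emph{via} $\lambda\mapsto\lambda\cdot1$, into the center of $K[M]$; and if an element of $M$ is regular in $M$, then it is also regular in $K[M]$ (a quasi-inverse in $M$ stays a quasi-inverse in $K[M]$).

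Because $M=\bigcup_n\ga_n(M)$, it is enough to prove, by induction on~$n$, the statement that $\lambda g\in\Add{K[M]}$ for all $\lambda\in K$ and $g\in\ga_n(M)$. For $n=0$, the element~$g$ belongs to the center of~$M$, hence commutes with every element of~$K[M]$; as~$\lambda$ is also central in~$K[M]$, the product~$\lambda g$ lies in the center of~$K[M]$, which is addable by Corollary~\ref{C:pq2Add}\eqref{Cen2Add}. For the inductive step, let $g\in\ga_{n+1}(M)$, so that $g^m=x_1\cdots x_ku y_1\cdots y_l$ for some $m>0$, some $u\in\ga_n(M)$, and regular elements $x_1,\dots,x_k,y_1,\dots,y_l$ of~$M$. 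Using the centrality of~$\lambda$, one gets $(\lambda g)^m=\lambda^m g^m=x_1\cdots x_k(\lambda^m u)y_1\cdots y_l$. By the induction hypothesis applied to the scalar~$\lambda^m$ and the element $u\in\ga_n(M)$, the factor~$\lambda^m u$ is addable in~$K[M]$; since each~$x_i$ and each~$y_j$ is regular in~$K[M]$, repeated use of Corollary~\ref{C:AddxReg} shows that $x_1\cdots x_k(\lambda^m u)y_1\cdots y_l=(\lambda g)^m$ is addable. Then Corollary~\ref{C:x^n2xAdd} yields that $\lambda g$ itself is addable, which closes the induction.

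The argument is mostly bookkeeping once the statement to be proved by induction is set up correctly; the one place where care is needed is to carry the scalar~$\lambda$ through the induction, rather than merely establishing that each $g\in M$ is addable. Proving only the latter would leave one needing the (unavailable) fact that a central element times an addable element is addable --- the paper explicitly flags the companion question, whether a product of two addable elements is addable, as open --- whereas absorbing~$\lambda^m$ into the middle factor~$u$ avoids the difficulty entirely.
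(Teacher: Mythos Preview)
Your proof is correct and follows essentially the same approach as the paper's: both argue by induction on~$n$ that $\lambda g\in\Add{K[M]}$ for all $\lambda\in K$ and $g\in\ga_n(M)$, using Corollary~\ref{C:pq2Add}\eqref{Cen2Add} for the base case and the combination of Corollaries~\ref{C:AddxReg} and~\ref{C:x^n2xAdd} for the inductive step. Your closing remark about the necessity of carrying the scalar~$\lambda$ through the induction is apt and matches the paper's implicit reasoning.
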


\begin{note}
In particular, Theorem~\ref{T:KMreg} specializes to the case where~$M$ is $\pi$-regular, or, more generally, where every element of~$M$ has a positive power which is a product of regular elements and a central element; that is, $M=\ga_1(M)$.
\end{note}

\begin{proof}
Set $N\eqdef\setm{v\in M}{(\forall\gl\in K)(\gl v\in\Add{K[M]})}$.
It suffices to verify that every $v\in M$ belongs to~$N$ (because then, by Proposition~\ref{P:S0S1}, every element of~$K[M]$, being a sum of addable monomials, would be addable in~$K[M]$).
We argue by induction on the least~$n$ such that $v\in\ga_n(M)$.
If $v\in\ga_0(M)$, then $\gl v$ is central in~$K[M]$ whenever $\gl\in K$, thus, by Corollary~\ref{C:pq2Add}, $\gl v$ is addable; whence $v\in N$.
Now suppose the property established at stage~$n$ and let $v\in\ga_{n+1}(M)$.
Pick $m>0$,\linebreak $u\in\ga_n(M)$, and regular $x_1,\dots,x_k,y_1,\dots,y_l\in M$ such that $v^m=x_1\cdots x_kuy_1\cdots y_l$.
It follows from our induction hypothesis that $\gl^mu\in\Add{K[M]}$ for every $\gl\in K$.
Invoking Corollary~\ref{C:AddxReg}, it follows that the element
$(\gl v)^m=x_1\cdots x_k\gl^muy_1\cdots y_l$ is addable in~$K[M]$.
By Corollary~\ref{C:x^n2xAdd}, it follows that~$\gl v$ is addable in~$K[M]$; whence $v\in\nobreak N$.
\end{proof}

\begin{theorem}\label{T:Weyl1}
Let~$K$ be a commutative ring.
If~$K$ has positive characteristic, then the Weyl algebra $\vA_1(K)\eqdef K\seq{x,y}/(xy-yx-1)$ is addable.
\end{theorem}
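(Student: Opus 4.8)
The plan is to exploit positive characteristic to write $\vA_1(K)$ as a ``polynomial ring in $x$'' whose coefficient ring $K[y]$ is addable, and then to apply Corollary~\ref{C:AddRoverx}. Throughout let $n\eqdef\operatorname{char}K$, so $n\cdot 1_K=0$, and write $Z\eqdef Z(\vA_1(K))$ for the center.

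First I would record the commutator identities $[x^m,y]=mx^{m-1}$ and $[x,y^m]=my^{m-1}$ in $\vA_1(K)$, both immediate from $[x,y]=1$ and the Leibniz rules $[ab,c]=a[b,c]+[a,c]b$, $[a,bc]=b[a,c]+[a,b]c$ by induction on $m$. Specializing to $m=n$ and using $n\cdot 1=0$ gives $[x^n,y]=[x,y^n]=0$; since trivially $[x^n,x]=[y^n,y]=0$, the elements $x^n$ and $y^n$ commute with both generators, hence $x^n,y^n\in Z$, so $K[y^n]\subseteq Z$. By Corollary~\ref{C:pq2Add}\eqref{Cen2Add}, $Z$ is addable in $\vA_1(K)$.

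Next I would prove $K[y]\subseteq\Add{\vA_1(K)}$. For $c\in K$ and $j\geq 0$, the element $a\eqdef cy^j$ satisfies $a^n=c^n(y^n)^j\in K[y^n]\subseteq Z$, which is addable; hence by Corollary~\ref{C:x^n2xAdd} (with exponent $n$), $a$ itself is addable. Since $\Add{\vA_1(K)}$ is an additive subgroup of $\vA_1(K)$ (Proposition~\ref{P:S0S1}), every $K$-linear combination of such monomials --- that is, every element of $K[y]$ --- is addable. Finally, by the PBW/normal-form theorem for the Weyl algebra, the monomials $y^jx^i$ ($i,j\geq 0$) form a $K$-basis of $\vA_1(K)$, so every $w\in\vA_1(K)$ can be written $w=\sum_{i=0}^{m}q_i(y)x^i$ with $q_i\in K[y]$; by the previous step each coefficient $q_i(y)$ is addable, so Corollary~\ref{C:AddRoverx} applied with $x$ in the role of the distinguished element yields that $\vA_1(K)$ is addable.

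The step that carries all the weight --- and the only place positive characteristic is used --- is the passage ``$K[y^n]$ addable $\Rightarrow K[y]$ addable'' via Corollary~\ref{C:x^n2xAdd}: in characteristic zero, $\vA_1(K)$ is a simple domain with $Z=K$, so none of $x,y,xy$ has an addable power forced on it by centrality, and the argument collapses. A minor point of care is that one must run the $n$-th power computation on a general monomial $cy^j$, not merely on $y^j$, so as to land inside the central subring $K[y^n]$ rather than merely inside the (non-central-in-$\vA_1(K)$) subring generated by $y$; this is exactly what $a^n=c^n(y^n)^j$ records. No analogous control over $x$ is needed, since $x$ enters Corollary~\ref{C:AddRoverx} only through its powers, which need not be addable.
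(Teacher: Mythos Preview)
Your proof is correct and follows essentially the same route as the paper's: show that monomials $cy^j$ have central $n$-th power, deduce via Corollary~\ref{C:x^n2xAdd} and Proposition~\ref{P:S0S1} that $K[y]\subseteq\Add{\vA_1(K)}$, and finish with Corollary~\ref{C:AddRoverx}. The only difference is that the paper swaps the roles of~$x$ and~$y$ (proving $K[x]$ addable and expanding in powers of~$y$), which is immaterial.
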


\begin{proof}
By assumption, there exists a positive integer~$m$ such that $m\cdot1=0$ within~$K$.
It follows that $x^my-yx^m=mx^{m-1}=0$ within~$\vA_1(K)$, so~$x^m$ commutes with~$y$.
Since it also commutes with~$x$, it follows that~$x^m$ is central.
Hence, for any $\gl\in K$ and any nonnegative integer~$k$, $(\gl x^k)^m=\gl^mx^{km}$ is also central, thus (cf. Corollary~\ref{C:pq2Add}) addable.
By Corollary~\ref{C:x^n2xAdd}, it follows that~$\gl x^k$ is addable.
By Proposition~\ref{P:S0S1}, it follows that every element of~$K[x]$ is addable in~$\vA_1(K)$.
Now every element of~$\vA_1(K)$ has the form $\sum_{k=0}^na_ky^k$ where each~$a_k\in K[x]$.
Since, by the above, all~$a_k$ are addable, it follows from Corollary~\ref{C:AddRoverx} that~$\vA_1(K)$ is addable.
\end{proof}

We do not know whether Theorem~\ref{T:Weyl1} extends to the case where~$K$ has characteristic zero.
In that direction, observe that regardless of the characteristic of~$K$, the pair $(x,y)$ is summable: indeed, the commutator $[x,y]=1$ is addable; apply Theorem~\ref{T:z=x+y}.
On the other hand, we do not know whether the pair $(x^2,y)$ is summable in~$\vA_1(\QQ)$.

\section{On the determinant function over rings of matrices}\label{S:det}

It is well known that the determinant function $\det\colon\Mat{n}{K}\to K$, for a positive integer~$n$ and a commutative ring~$K$, is multiplicative (i.e., $\det(ab)=\det(a)\det(b)$ for all $n\times n$ matrices~$a$ and~$b$).
On the other hand, the identity $\det(1+a)=1+\det(a)$ fails for trivial examples.
If by chance it holds over a subring~$R$ of~$\Mat{n}{K}$, then the question arises whether the determinant function is actually additive on~$R$.

In this section we tackle that problem.
We already know from Corollary~\ref{C:M2(comm)} that the answer to the question above is positive for $n\in\set{1,2}$, and that we then get full addability of~$R$.
In Theorem~\ref{T:deta+1} we will verify that the answer to the question above is also positive provided~$R$ contains at least~$n$ scalar matrices with non zero divisor differences, further establishing full addability of~$R$.
Finally, in Theorem~\ref{T:deton3x3} we will verify that this is also the case for $n=3$ (however we could not prove full addability in that case).

\begin{theorem}\label{T:deta+1}
Let~$n$ be a positive integer and let~$K$ a commutative ring.
Denote by~$\one$ the identity matrix in~$\Mat{n}{K}$ and let $\gl_1,\dots,\gl_n\in K$ such that $\gl_i-\gl_j$ is not a zero divisor in~$K$ whenever $i\neq j$.
If a subring~$R$ of~$\Mat{n}{K}$ contains $\setm{\gl_i\one}{1\leq i\leq n}$ and $\det(\one+a)=1+\det(a)$ whenever $a\in\nobreak R$, then~$a^n$ is central whenever $a\in R$.
In particular, $R$ is an addable ring, so the equation $\det(a+b)=\det(a)+\det(b)$ holds whenever $a,b\in R$.
\end{theorem}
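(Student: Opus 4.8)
The plan is to establish first the inner claim --- that $a^{n}$ is central in~$R$ for every $a\in R$ --- and then to read off the two remaining assertions from results already at hand. For $b\in\Mat{n}{K}$ and $0\le k\le n$, write $e_{k}(b)$ for the sum of the $k\times k$ principal minors of~$b$ (so $e_{0}(b)=1$, $e_{1}(b)=\operatorname{tr}(b)$, and $e_{n}(b)=\det b$). The multilinear expansion $\det(\one+b)=\sum_{k=0}^{n}e_{k}(b)$, valid over any commutative ring, turns the assumption ``$\det(\one+a)=1+\det a$ for all $a\in R$'' into the single family of identities $\sum_{k=1}^{n-1}e_{k}(b)=0$, for all $b\in R$. (The argument below is uniform in $n\ge 1$; for $n\le 2$ it reduces to observing that $e_{1}=\operatorname{tr}$ vanishes on~$R$, after which Cayley--Hamilton already forces $a^{n}$ to be scalar.)

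To isolate the individual functions $e_{k}(a)$ I would exploit the scalar matrices. Set $\mu_{j}\eqdef\gl_{j}-\gl_{n}$ for $1\le j\le n-1$; since $\gl_{j}\one$ and $\gl_{n}\one$ lie in the subring~$R$, so does $\mu_{j}\one$, whence $\mu_{j}a=(\mu_{j}\one)a\in R$ for every $a\in R$. As $e_{k}$ is homogeneous of degree~$k$, i.e.\ $e_{k}(\mu b)=\mu^{k}e_{k}(b)$, applying the relation above to $b=\mu_{j}a$ yields the square linear system $\sum_{k=1}^{n-1}\mu_{j}^{k}\,e_{k}(a)=0$, for $1\le j\le n-1$. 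Its coefficient matrix $V=(\mu_{j}^{k})_{1\le j,k\le n-1}$ has determinant $\det V=\bigl(\prod_{j}\mu_{j}\bigr)\cdot\prod_{i<j}(\mu_{j}-\mu_{i})$, each factor of which is one of the prescribed differences $\gl_{j}-\gl_{n}$ or $\gl_{j}-\gl_{i}$, hence not a zero divisor; therefore $\det V$ is not a zero divisor. Multiplying the vector equation $V\cdot(e_{1}(a),\dots,e_{n-1}(a))^{\mathsf T}=0$ on the left by the adjugate of~$V$ gives $(\det V)\,e_{k}(a)=0$, whence $e_{k}(a)=0$ for $1\le k\le n-1$. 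Feeding this into the Cayley--Hamilton identity $a^{n}-e_{1}(a)a^{n-1}+\cdots+(-1)^{n}e_{n}(a)\one=0$ leaves $a^{n}=(-1)^{n-1}\det(a)\,\one$, a scalar matrix, hence central in~$R$; this proves the claim.

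Now for each $a\in R$ the central element~$a^{n}$ is addable by Corollary~\ref{C:pq2Add}\eqref{Cen2Add}, so~$a$ itself is addable by Corollary~\ref{C:x^n2xAdd} (equivalently, invoke Corollary~\ref{C:abs2z=x+y} directly with ``$a^{n}$ central''); hence $R=\Add R$ is addable. Finally $\det\colon R\to K$ is multiplicative and, by hypothesis, satisfies $\det(\one+a)=1+\det a$, so it is a brachymorphism with domain~$R$; the addability of~$R$ then forces $\det$ to be additive, i.e.\ $\det(a+b)=\det(a)+\det(b)$ for all $a,b\in R$. There is no essential obstacle; the only points demanding care are the standard determinant bookkeeping (the expansion of $\det(\one+b)$ and the signs in Cayley--Hamilton) and, over a general commutative ring, the passage from $Vv=0$ to $v=0$ through the adjugate, since $\det V$ is merely a non zero divisor and need not be a unit of~$K$.
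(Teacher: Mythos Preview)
Your proof is correct and follows the same overall architecture as the paper's: kill the intermediate characteristic-polynomial coefficients via a Vandermonde-type linear system, invoke Cayley--Hamilton to see that~$a^n$ is a scalar matrix, and then finish with the addability of central elements (Corollary~\ref{C:pq2Add}) and integral closedness (Corollaries~\ref{C:x^n2xAdd}/\ref{C:abs2z=x+y}).

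The one genuine difference lies in how the linear system is obtained. The paper first observes that $\det$ is a brachymorphism on~$R$ and that each $\gl_i\one$ is central, hence addable; this already gives $\det(\gl_i\one-a)=\gl_i^{\,n}+(-1)^n\det(a)$, from which the vanishing of the~$\tau_k(a)$ follows via the full $n\times n$ Vandermonde in the~$\gl_i$. You instead apply the bare hypothesis $\det(\one+b)=1+\det(b)$ to the rescalings $b=\mu_j a$ with $\mu_j=\gl_j-\gl_n$, using only the expansion $\det(\one+b)=\sum_k e_k(b)$ and the homogeneity $e_k(\mu b)=\mu^k e_k(b)$; this yields an $(n-1)\times(n-1)$ system whose determinant factors entirely into the prescribed non-zero-divisor differences. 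Your route is a touch more elementary in that the claim ``$a^n$ is central'' is established by pure matrix algebra, with no appeal to the addability machinery until the very last step; the paper's route, on the other hand, shows how naturally that machinery feeds back into the computation. Either way the adjugate trick you use (to pass from $Vv=0$ to $v=0$ when $\det V$ is merely a non-zero-divisor) is exactly what is needed over a general commutative ring, and your sign bookkeeping in Cayley--Hamilton, $a^n=(-1)^{n-1}\det(a)\,\one$, is correct.
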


For a similar result, with weaker assumptions, assuming $n=3$, see Theorem~\ref{T:deton3x3}.

\begin{proof}
Let us denote the characteristic polynomial of any $a\in\Mat{n}{K}$ as
 \begin{equation}\label{Eq:pa(t)}
 p_a(t)=\det(t\one-a)=
 t^n+\sum_{0<k<n}(-1)^{k}\tau_k(a)t^{n-k}+(-1)^n\det(a)\,,
 \end{equation}
where~$t$ is an indeterminate over~$K$ and each~$\tau_k$ is a polynomial in the entries of~$K$ over the integers.
(It is well known that each~$\tau_k(a)$ is the trace of the $k$th exterior power of~$a$.)
Our assumption means that the assignment $a\mapsto\det(a)$ defines a brachymorphism from~$R$ to~$K$.
By Corollary~\ref{C:pq2Add}, every central element of~$R$ is addable, thus
\[
p_a(\gl)=\det(\gl\one-a)=\gl^n+(-1)^n\det(a)\,,\quad
\text{whenever }\gl\in K\text{ and }\set{\gl\one,a}\subseteq R\,.
\]
Comparing with~\eqref{Eq:pa(t)} (evaluated at~$\gl_i$), it follows that
 \begin{equation}\label{Eq:truncdet}
 \sum_{0<k<n}(-1)^{k}\tau_k(a)\gl_i^{n-k}=0\qquad
 \text{whenever }1\leq i\leq n\text{ and }a\in R\,.
 \end{equation}
Now our hypothesis on the elements~$\gl_i-\gl_j$ implies that the Vandermonde determinant
 \[
 \begin{vmatrix}
 1 & \gl_1 & \cdots & \gl_1^{n-1}\\
 1 & \gl_2 & \cdots & \gl_2^{n-1}\\
 \vdots & \vdots & \ddots & \vdots\\
 1 & \gl_n & \cdots & \gl_n^{n-1}
 \end{vmatrix}
 =\prod_{1\leq i<j\leq n}(\gl_j-\gl_i)
 \]
is not a zero divisor in~$K$.
By~\eqref{Eq:truncdet}, it follows that $\tau_k(a)=0$ whenever $0<k<n$.
Due to~\eqref{Eq:pa(t)}, this means that
 \[
 p_a(t)=\det(t\one-a)=t^n+(-1)^n\det(a)\qquad
 \text{whenever }a\in R\,.
 \]
By the Cayley-Hamilton Theorem, it follows that
 \[
 a^n+(-1)^n\det(a)\one=0\qquad\text{whenever }a\in R\,.
 \]
In particular, $a^n$ is central.
The conclusion then follows from Corollary~\ref{C:abs2z=x+y}.
\end{proof}

Set $\spd{a}{b}\eqdef\tau_2(a+b)-\tau_2(a)-\tau_2(b)$, for all square matrices~$a$ and~$b$ of the same order over any commutative ring.
The following lemma is the special case where $n=2$ of the main result of Reutenauer and Sch\"{u}tzenberger~\cite{ReutSchutz1987}.
It can also easily be checked by a direct computation.

\begin{lemma}\label{L:tr(a)tr(b)2tr(ab)}
The equation $\tau_1(a)\tau_1(b)=\tau_1(ab)+\spd{a}{b}$ holds whenever~$a$ and~$b$ are square matrices of the same order over any commutative ring.
\end{lemma}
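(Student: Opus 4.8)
The plan is to verify the identity by a direct expansion in matrix entries, exploiting that both sides are polynomials with integer coefficients in the entries of~$a$ and~$b$. Writing $a=(a_{ij})$, $b=(b_{ij})$ for matrices of order~$n$ over~$K$, I would start from the standard description $\tau_2(c)=\sum_{1\leq i<j\leq n}(c_{ii}c_{jj}-c_{ij}c_{ji})$ of~$\tau_2(c)$ as the sum of the $2\times2$ principal minors of~$c$ (the quantity already identified in the excerpt as the trace of the second exterior power).

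First I would substitute $c=a+b$ and expand each product $(a_{ii}+b_{ii})(a_{jj}+b_{jj})$ and $(a_{ij}+b_{ij})(a_{ji}+b_{ji})$; upon subtracting $\tau_2(a)$ and~$\tau_2(b)$, all the ``pure~$a$'' and ``pure~$b$'' contributions cancel, leaving
\[
\spd{a}{b}=\sum_{1\leq i<j\leq n}\bigl(a_{ii}b_{jj}+a_{jj}b_{ii}-a_{ij}b_{ji}-a_{ji}b_{ij}\bigr)\,.
\]
Next I would reorganize each family of terms over unordered pairs $\{i,j\}$ into a sum over ordered pairs $i\neq j$: the diagonal part becomes $\sum_{i\neq j}a_{ii}b_{jj}=\tau_1(a)\tau_1(b)-\sum_i a_{ii}b_{ii}$, while the off-diagonal part becomes $\sum_{i\neq j}a_{ij}b_{ji}=\tau_1(ab)-\sum_i a_{ii}b_{ii}$, using $\tau_1(ab)=\sum_{1\leq i,k\leq n}a_{ik}b_{ki}$ and separating the $i=k$ summands. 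Subtracting, the two occurrences of $\sum_i a_{ii}b_{ii}$ cancel and one gets exactly $\spd{a}{b}=\tau_1(a)\tau_1(b)-\tau_1(ab)$, i.e.\ the asserted equation $\tau_1(a)\tau_1(b)=\tau_1(ab)+\spd{a}{b}$.

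There is essentially no obstacle here; the only point needing a touch of care is the bookkeeping between sums over unordered pairs $\{i,j\}$ and over ordered pairs $(i,j)$ with $i\neq j$, together with the isolation of the diagonal ($i=k$) contribution to~$\tau_1(ab)$. As an even shorter alternative one can first prove the identity over~$\QQ$, where $\tau_1$ and~$\tau_2$ are the first two elementary symmetric functions of the eigenvalues, so that $\tau_2(c)=\tfrac12\bigl(\tau_1(c)^2-\tau_1(c^2)\bigr)$; then $\spd{a}{b}=\tau_1(a)\tau_1(b)-\tau_1(ab)$ drops out of the single relation $\tau_1\bigl((a+b)^2\bigr)=\tau_1(a^2)+2\tau_1(ab)+\tau_1(b^2)$. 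Since this is a polynomial identity with integer coefficients in the entries of~$a$ and~$b$, it is valid in the ring of generic matrices over~$\QQ$, hence over~$\ZZ$, and therefore specializes to an arbitrary commutative ring~$K$.
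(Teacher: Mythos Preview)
Your proposal is correct and is exactly the ``direct computation'' the paper alludes to without spelling out; the paper itself gives no detailed argument, merely citing the case $n=2$ of Reutenauer--Sch\"utzenberger and saying the identity ``can also easily be checked by a direct computation''. Your entrywise expansion via the $2\times2$ principal-minor formula for~$\tau_2$ is the natural such check, and the alternative Newton-identity argument over~$\QQ$ followed by specialization is a pleasant bonus.
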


The next instance of the main result of Reutenauer and Sch\"{u}tzenberger~\cite{ReutSchutz1987}, obtained by letting $n=3$, states that
 \begin{multline}\label{Eq:det3x3}
 \tau_3(a+b)=
 \tau_3(a)+\tau_3(b)-\tau_1(ab)\tau_1(a+b)\\
 + \tau_1(a)\tau_2(b)+\tau_2(a)\tau_1(b)+
 \tau_1(a^2b)+\tau_1(ab^2)\,,
 \end{multline}
for any square matrices~$a$ and~$b$ of the same order over any commutative ring.
In what follows we will focus on $3\times3$ matrices, in which case~$\tau_3$ is the determinant function.

\begin{theorem}\label{T:deton3x3}
Let~$R$ be a ring of $3\times3$ matrices over a commutative ring.
If $\det(\one+a)=1+\det(a)$ whenever $a\in R$, then $\det(a+b)=\det(a)+\det(b)$ whenever $a,b\in R$.
\end{theorem}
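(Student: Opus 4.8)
The plan is to convert the hypothesis into two scalar relations holding on all of~$R$ and then feed those, together with Lemma~\ref{L:tr(a)tr(b)2tr(ab)}, into the cubic trace identity~\eqref{Eq:det3x3}.

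First I would recast the hypothesis. With notation as in~\eqref{Eq:pa(t)}, the characteristic polynomial of a $3\times3$ matrix~$a$ over the commutative base ring~$K$ is $\det(t\one-a)=t^3-\tau_1(a)t^2+\tau_2(a)t-\det(a)$, where~$\tau_1$ is the trace (additive in the matrix), $\tau_2$ is homogeneous of degree~$2$, and $\tau_3=\det$. Expanding $\det(\one+a)$ gives the polynomial identity $\det(\one+a)=1+\tau_1(a)+\tau_2(a)+\det(a)$, valid over any commutative ring; hence the hypothesis says exactly that
\[
\tau_1(a)+\tau_2(a)=0\qquad\text{for all }a\in R\,.
\]
Applying this relation to~$a$ and to $2a\in R$, and using $\tau_1(2a)=2\tau_1(a)$ and $\tau_2(2a)=4\tau_2(a)$, I would eliminate~$\tau_2(a)$ to obtain a second relation,
\[
2\tau_1(a)=0\qquad\text{for all }a\in R\,.
\]

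Next, since on~$R$ we have $\tau_2=-\tau_1$ and the trace is additive, the bilinear defect $\spd{a}{b}=\tau_2(a+b)-\tau_2(a)-\tau_2(b)$ is identically~$0$ on~$R$; by Lemma~\ref{L:tr(a)tr(b)2tr(ab)} this forces $\tau_1(ab)=\tau_1(a)\tau_1(b)$ for all $a,b\in R$. Iterating this (legitimate because~$R$ is a ring, so $ab\in R$) yields $\tau_1(a^2b)=\tau_1(a)^2\tau_1(b)$ and $\tau_1(ab^2)=\tau_1(a)\tau_1(b)^2$. Then I would substitute everything into~\eqref{Eq:det3x3} with $\tau_3=\det$: replacing $\tau_2(a),\tau_2(b)$ by $-\tau_1(a),-\tau_1(b)$, using additivity of~$\tau_1$ together with $\tau_1(ab)=\tau_1(a)\tau_1(b)$ and the two formulas above, the cubic monomials in $\tau_1(a),\tau_1(b)$ cancel in pairs and one is left with $\det(a+b)-\det(a)-\det(b)=-2\tau_1(a)\tau_1(b)$.

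The only point I expect to look delicate is precisely this leftover term $-2\tau_1(a)\tau_1(b)$: after the cancellation it is not visibly zero, and the proof closes only because the hypothesis, pushed a little further via the substitution $a\mapsto 2a$, forces $2\tau_1$ to vanish on~$R$, so that $-2\tau_1(a)\tau_1(b)=-\bigl(2\tau_1(a)\bigr)\tau_1(b)=0$. Everything else is routine manipulation of~\eqref{Eq:det3x3} and Lemma~\ref{L:tr(a)tr(b)2tr(ab)}. Note that, in contrast with Theorem~\ref{T:deta+1}, this argument needs no scalar units in~$R$ (it uses only that~$R$ is closed under addition and multiplication) and establishes merely the additivity of the single brachymorphism~$\det$, not the full addability of~$R$.
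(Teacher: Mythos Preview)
Your argument is correct and follows essentially the same route as the paper's: both derive $\tau_1+\tau_2=0$ and $2\tau_1=0$ on~$R$, deduce $\spd{\,\cdot\,}{\,\cdot\,}=0$, and then feed this together with Lemma~\ref{L:tr(a)tr(b)2tr(ab)} into~\eqref{Eq:det3x3}. The only cosmetic difference is in the final bookkeeping: where you expand everything to reach the residue $-2\tau_1(a)\tau_1(b)$ and kill it with $2\tau_1=0$, the paper instead recognizes the leftover as $\tau_1(u)\tau_1(v)-\tau_1(uv)=\spd{u}{v}$ with $u=ab$, $v=a+b$, and kills it directly with $\spd{\,\cdot\,}{\,\cdot\,}=0$.
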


\begin{proof}
For any $x\in R$, we get, evaluating the characteristic polynomial of~$x$ at~$1$,
 \[
 \det(\one-x)=p_{x}(1)=1-\tau_1(x)+\tau_2(x)-\det(x)\,,
 \]
so our assumption means that
 \begin{equation}\label{Eq:tr1=tr2}
 \tau_2(x)=\tau_1(x)\qquad\text{whenever }x\in R\,.
 \end{equation}
Since~$-x\in R$, \eqref{Eq:tr1=tr2} also holds at~$-x$, that is, 
$\tau_2(x)=-\tau_1(x)$.
It follows that
 \begin{equation}\label{Eq:2trx=0}
 2\tau_1(x)=0\qquad\text{whenever }x\in R\,.
 \end{equation}
By substituting $x+y$ to~$x$ in~\eqref{Eq:tr1=tr2}, then using the additivity of~$\tau_1$ and canceling out $\tau_1(x)+\tau_1(y)$, we obtain
 \begin{equation}\label{Eq:spd=0}
 \spd{x}{y}=0\qquad\text{whenever }x,y\in R\,.
 \end{equation}
By~\eqref{Eq:det3x3} combined with the identity $\tau_1(xy)=\tau_1(yx)$, it suffices to prove that for all $a,b\in R$, the scalar
 \[
 \gd\eqdef\tau_1(ab)\tau_1(a+b)-
 \tau_1(a)\tau_2(b)-\tau_2(a)\tau_1(b)-
 \tau_1(ab(a+b))
 \]
vanishes.
Since, by~\eqref{Eq:tr1=tr2} and~\eqref{Eq:2trx=0}, $\tau_1$ and~$\tau_2$ agree on~$R$, it follows from~\eqref{Eq:2trx=0} that
 \[
 \gd=\tau_1(u)\tau_1(v)-\tau_1(uv)
 \]
where we set $u\eqdef ab$ and $v\eqdef a+b$.
By Lemma~\ref{L:tr(a)tr(b)2tr(ab)}, this means that $\gd=\spd{u}{v}$.
Since~$u$ and~$v$ both belong to~$R$ and by~\eqref{Eq:spd=0}, it follows that $\gd=0$, as desired.
\end{proof}

\section{Expanding the context}
\label{S:MoreCtxts}

In order to keep our context consistent our discussion has been so far limited to (associative, unital) rings.
However, the concept of brachymorphism makes sense for more general structures, giving rise to non-trivial additional results and problems.

In what follows we will still be dealing with structures with signature $(0,1,+,\cdot)$.
Brachymorphisms will be defined by the rules $f(0)=0$ (which may no longer follow from the others), $f(1+x)=1+f(x)$, and $f(xy)=f(x)f(y)$.
A \emph{brachy-automorphism} will be defined as a bijective brachymorphism.
All counterexamples in this section were produced \emph{via} the \texttt{Mace4} software (cf. McCune~\cite{McCune}).

\subsection{Semirings}\label{Su:Semiring}
Recall that a structure $(R,0,1,+,\cdot)$ is a \emph{semiring} if $(R,+,0)$ is a commutative monoid, $(R,\cdot,1)$ is a monoid, and~$\cdot$ is distributive on~$+$\,.
For example, every ring, and also every bounded distributive lattice, is a semiring.
In the latter case, the addition is determined by the multiplication.
The semiring and brachy-automorphism represented in Table~\ref{Tab:BrachySemiring} show that in general, the addition of a commutative semiring may not be determined by its multiplication and successor function.
This example can be described as the $4$-element chain $0<a<b<1$, with $x+y=\max\set{x,y}$ for all elements~$x$, $y$, whereas $xy=0$ unless $1\in\set{x,y}$.

\begin{table}[htb]  \centering % size 4
\begin{tabular}{r|rrrr}
$+$ & 0 & $a$ & $b$ & 1\\
\hline
    0 & 0 & $a$ & $b$ & 1 \\
    $a$ & $a$ & $a$ & $b$ & 1 \\
    $b$ & $b$ & $b$ & $b$ & 1 \\
    1 & 1 & 1 & 1 & 1
\end{tabular} \hspace{.5cm}
\begin{tabular}{r|rrrr}
$\cdot$ & 0 & $a$ & $b$ & 1\\
\hline
    0 & 0 & 0 & 0 & 0 \\
    $a$ & 0 & 0 & 0 & $a$ \\
    $b$ & 0 & 0 & 0 & $b$ \\
    1 & 0 & $a$ & $b$ & 1
\end{tabular}
\caption{A commutative semiring with elements~$a$, $b$ such that the transposition
$f=(a\quad b)$ is a brachy-au\-to\-mor\-phism satisfying $f(a+b)\neq f(a)+f(b)$}\label{Tab:BrachySemiring}
\end{table}

\subsection{Near-rings}\label{Su:Near-rings}
A \emph{right near-ring} is defined the same way as a ring, except that the addition is not assumed to be commutative (it still remains a group) and the multiplication is not assumed to be left distributive on the addition (we keep the right distributivity $(x+y)z=xz+yz$).

Table~\ref{Tab:NonAssocNearRing} represents a (unital) right near-ring with commutative addition, with a brachy-automorphism $f=\begin{pmatrix}6 & 8\end{pmatrix}\begin{pmatrix}7 & 9\end{pmatrix}$ (i.e., expressed as a product of transpositions) and elements~$a=2$, $b=4$ such that 
$f(a)=a$, $f(b)=b$, but $f(a+b)\neq a+b$; hence in this example, the addition is not determined by the multiplication and the successor function.
This near-ring satisfies the identity $2x=0$, so its additive group is $(\ZZ/2\ZZ)^4$.
It also satisfies the identity~$x0=0$ (which does not hold in all right near-rings).

\begin{table}[htb]  \centering % size 16
\begin{tabular}{r|rrrrrrrrrrrrrrrr}
$+$ & 0 & 1 & 2 & 3 & 4 & 5 & 6 & 7 & 8 & 9 & 10 & 11 & 12 & 13 & 14 & 15\\
\hline
    0 & 0 & 1 & 2 & 3 & 4 & 5 & 6 & 7 & 8 & 9 & 10 & 11 & 12 & 13 & 14 & 15 \\
    1 & 1 & 0 & 3 & 2 & 5 & 4 & 7 & 6 & 9 & 8 & 11 & 10 & 13 & 12 & 15 & 14 \\
    2 & 2 & 3 & 0 & 1 & 6 & 7 & 4 & 5 & 10 & 11 & 8 & 9 & 14 & 15 & 12 & 13 \\
    3 & 3 & 2 & 1 & 0 & 7 & 6 & 5 & 4 & 11 & 10 & 9 & 8 & 15 & 14 & 13 & 12 \\
    4 & 4 & 5 & 6 & 7 & 0 & 1 & 2 & 3 & 12 & 13 & 14 & 15 & 8 & 9 & 10 & 11 \\
    5 & 5 & 4 & 7 & 6 & 1 & 0 & 3 & 2 & 13 & 12 & 15 & 14 & 9 & 8 & 11 & 10 \\
    6 & 6 & 7 & 4 & 5 & 2 & 3 & 0 & 1 & 14 & 15 & 12 & 13 & 10 & 11 & 8 & 9 \\
    7 & 7 & 6 & 5 & 4 & 3 & 2 & 1 & 0 & 15 & 14 & 13 & 12 & 11 & 10 & 9 & 8 \\
    8 & 8 & 9 & 10 & 11 & 12 & 13 & 14 & 15 & 0 & 1 & 2 & 3 & 4 & 5 & 6 & 7 \\
    9 & 9 & 8 & 11 & 10 & 13 & 12 & 15 & 14 & 1 & 0 & 3 & 2 & 5 & 4 & 7 & 6 \\
    10 & 10 & 11 & 8 & 9 & 14 & 15 & 12 & 13 & 2 & 3 & 0 & 1 & 6 & 7 & 4 & 5 \\
    11 & 11 & 10 & 9 & 8 & 15 & 14 & 13 & 12 & 3 & 2 & 1 & 0 & 7 & 6 & 5 & 4 \\
    12 & 12 & 13 & 14 & 15 & 8 & 9 & 10 & 11 & 4 & 5 & 6 & 7 & 0 & 1 & 2 & 3 \\
    13 & 13 & 12 & 15 & 14 & 9 & 8 & 11 & 10 & 5 & 4 & 7 & 6 & 1 & 0 & 3 & 2 \\
    14 & 14 & 15 & 12 & 13 & 10 & 11 & 8 & 9 & 6 & 7 & 4 & 5 & 2 & 3 & 0 & 1 \\
    15 & 15 & 14 & 13 & 12 & 11 & 10 & 9 & 8 & 7 & 6 & 5 & 4 & 3 & 2 & 1 & 0
\end{tabular} \hspace{.5cm}
\begin{tabular}{r|rrrrrrrrrrrrrrrr}
$\cdot$ & 0 & 1 & 2 & 3 & 4 & 5 & 6 & 7 & 8 & 9 & 10 & 11 & 12 & 13 & 14 & 15\\
\hline
    0 & 0 & 0 & 0 & 0 & 0 & 0 & 0 & 0 & 0 & 0 & 0 & 0 & 0 & 0 & 0 & 0 \\
    1 & 0 & 1 & 2 & 3 & 4 & 5 & 6 & 7 & 8 & 9 & 10 & 11 & 12 & 13 & 14 & 15 \\
    2 & 0 & 2 & 0 & 0 & 0 & 0 & 0 & 0 & 0 & 0 & 0 & 0 & 0 & 0 & 0 & 12 \\
    3 & 0 & 3 & 2 & 3 & 4 & 5 & 6 & 7 & 8 & 9 & 10 & 11 & 12 & 13 & 14 & 3 \\
    4 & 0 & 4 & 0 & 0 & 0 & 0 & 0 & 0 & 0 & 0 & 0 & 0 & 0 & 0 & 0 & 4 \\
    5 & 0 & 5 & 2 & 3 & 4 & 5 & 6 & 7 & 8 & 9 & 10 & 11 & 12 & 13 & 14 & 11 \\
    6 & 0 & 6 & 0 & 0 & 0 & 0 & 0 & 0 & 0 & 0 & 0 & 0 & 0 & 0 & 0 & 8 \\
    7 & 0 & 7 & 2 & 3 & 4 & 5 & 6 & 7 & 8 & 9 & 10 & 11 & 12 & 13 & 14 & 7 \\
    8 & 0 & 8 & 0 & 0 & 0 & 0 & 0 & 0 & 0 & 0 & 0 & 0 & 0 & 0 & 0 & 6 \\
    9 & 0 & 9 & 2 & 3 & 4 & 5 & 6 & 7 & 8 & 9 & 10 & 11 & 12 & 13 & 14 & 9 \\
    10 & 0 & 10 & 0 & 0 & 0 & 0 & 0 & 0 & 0 & 0 & 0 & 0 & 0 & 0 & 0 & 10 \\
    11 & 0 & 11 & 2 & 3 & 4 & 5 & 6 & 7 & 8 & 9 & 10 & 11 & 12 & 13 & 14 & 5 \\
    12 & 0 & 12 & 0 & 0 & 0 & 0 & 0 & 0 & 0 & 0 & 0 & 0 & 0 & 0 & 0 & 2 \\
    13 & 0 & 13 & 2 & 3 & 4 & 5 & 6 & 7 & 8 & 9 & 10 & 11 & 12 & 13 & 14 & 13 \\
    14 & 0 & 14 & 0 & 0 & 0 & 0 & 0 & 0 & 0 & 0 & 0 & 0 & 0 & 0 & 0 & 14 \\
    15 & 0 & 15 & 2 & 3 & 4 & 5 & 6 & 7 & 8 & 9 & 10 & 11 & 12 & 13 & 14 & 1
\end{tabular}
\caption{A right near-ring with commutative addition, with elements $a=2$, $b=4$ and a brachy-automorphism $f=(6\quad 8)(7\quad 9)$ satisfying $f(a)=a$, $f(b)=b$, and $f(a+b)\neq a+b$}
\label{Tab:NonAssocNearRing}
\end{table}

\subsection{Cancellative semirings}\label{Su:CancSemiRings}
In constrast to the example given in Sub\-sec\-tion~\ref{Su:Semiring}, the context of (additively) cancellative semirings (i.e., $x+z=y+z\Rightarrow x=y$) allows many addability results, originally formulated for rings, to extend to cancellative semirings (though not trivially: while every cancellative semiring~$R$ canonically embeds into its universal ring~$\ol{R}$, it is not clear whether a brachymorphism on~$R$ would extend to a brachymorphism on~$\ol{R}$).
A sample of such extensions is given as follows:

\begin{enumerater}
\item\textbf{Lemma~\ref{L:CharAdd}}
(first-order characterization of summable tuples): change ``ring'' to ``semiring'' (or ``cancellative semiring'', or anything reasonable), same proof.

\item\textbf{Proposition~\ref{P:ClosAdd}} (summability is preserved under homomorphic image and finite direct product): change ``ring'' to ``cancellative semiring'', same proof.

\item\textbf{Lemma~\ref{L:pq2Add}} (summability formula for $xyx=x^2y$): change ``ring'' to ``cancellative semiring'', modify the proof \emph{mutatis mutandis} (e.g., change $(x+y-z)z=0$ to $(x+y)z=z^2$).
Corollary~\ref{C:pq2Add} remains valid for cancellative semirings.

\item\textbf{Theorem~\ref{T:IntClos}}
\label{IntClos}
(integral closedness of~$\Add{R}$ in~$R$): change ``ring'' to ``cancellative semiring'', extend the definition of ``right integral over~$A$'' by enabling
 \[
 x^n+\sum\nolimits_{k<n}b_kx^k=\sum\nolimits_{k<n}a_kx^k
 \]
with all $a_k,b_k\in A$.
Change the equation $zx^n=\sum_{k<n}a_kx^k+(a_n+y)x^n$ to $\sum_{k<n}b_kx^k+zx^n=\sum_{k<n}a_kx^k+(a_n+y)x^n$.
The rest of the proof goes along the same lines as in the ring case.

\item\textbf{Section~\ref{S:ClAddR}}.
Owing to~\eqref{IntClos} above, all other results of Section~\ref{S:ClAddR} can be extended the same way, with the exceptions of Corollaries~\ref{C:abs2z=x+y} and~\ref{C:pireg}, for it is not clear whether regular implies addable in that more general cancellative semiring context.
For the desired extension of Corollary~\ref{C:Komatsu}, note that every brachynomial $t(x,y)$ has a unique monomial of largest degree $d$, and that $d>0$ in all nontrivial cases.
Evaluating $t(n\cdot1,n\cdot1)$ for a large enough integer~$n$, it follows that $m\cdot1=0$ for some positive integer~$m$; thus our semiring is in fact a ring.

\item\textbf{Theorem~\ref{T:z+xzy}} ($f(1+x)=1+f(x)$ and $f(z+xzy)=f(z)+f(x)f(z)f(y)$ together entail additivity): change ``ring'' to ``cancellative semiring'', modify the proof \emph{mutatis mutandis}.

\end{enumerater}

\subsection{More open problems}\label{Su:OpenPbs}

We know from Corollary~\ref{C:pireg} (see also Remark~\ref{Rk:pireg}) that every brachymorphism from a finite ring is additive.
For (finite or infinite) non-associative rings we do not know the answer.

\begin{problem}
Is every brachymorphism from a (unital) non-associative ring additive?
\end{problem}

We stated at the beginning of the paper the basic unsolved problem, whether every brachymorphism between rings is additive.
By Proposition~\ref{P:ClosAdd}\eqref{Quot2Summ}, this is equivalent to saying that the pair $(x,y)$ is summable in the free associative ring~$\ZZ\seq{x,y}$. 
A strong negative solution to that problem would follow from a positive solution to the following problem.

\begin{problem}\label{Pb:ExoticEval}
Let~$\kk$ be a field.
Does there exist a brachymorphism $f\colon\kk\seq{x,y}\to\kk$ such that $f(x)=f(y)=0$ whereas $f(x+y)=1$?
\end{problem}

Problem~\ref{Pb:ExoticEval} seems to have something to do with the one, stated as Conjecture~3, page~122 in Cohn~\cite{Cohn1974}, whether no nonconstant element of~$\kk\seq{\gS}$ generates~$\kk\seq{\gS}$ as a two-sided ideal.
(This problem is also mentioned in Bergman~\cite{Berg2021}, which is focused on an analogue with group algebras over free groups.)
For example, if $u_1av_1+u_2av_2=1$ for some $u_i,v_i\in\kk\seq{x,y}$ (which is a strong way of ensuring that~$a$ generates~$\kk\seq{x,y}$ as a two-sided ideal), then $f(a)\neq0$ for any brachymorphism~$f$ solving Problem~\ref{Pb:ExoticEval}.
On the other hand, the argument presented in Makar-Limanov \cite[\S~5]{Makar1985} shows that if the characteristic of~$\kk$ is zero, then $u_1av_1+u_2av_2=1$ cannot occur unless~$a$ is constant.

\section*{Acknowledgments}\label{S:Acknow}
The author thanks Ken Goodearl, Hannes Thiel, and, most of all, George Berg\-man, for many instructive comments on this preprint, which led to a number of improvements.

%\bibliographystyle{amsplain}
%\bibliography{TM_biblio}

\begin{thebibliography}{10}

\bibitem{AKTY1980}
Hazar Abu-Khuzam, Hisao Tominaga, and Adil Yaqub, \emph{Equational definability
  of addition in rings satisfying polynomial identities}, Math. J. Okayama
  Univ. \textbf{22} (1980), no.~1, 55--57. \MR{573671}

\bibitem{Azu1954}
Gor\^{o} Azumaya, \emph{Strongly {$\pi$}-regular rings}, J. Fac. Sci. Hokkaido
  Univ. Ser. I. \textbf{13} (1954), 34--39. \MR{0067864}

\bibitem{Berg74a}
George~M. Bergman, \emph{Some examples in {PI} ring theory}, Israel J. Math.
  \textbf{18} (1974), 257--277. \MR{357488}

\bibitem{Berg2021}
\bysame, \emph{Which group algebras cannot be made zero by imposing a single
  non-monomial relation?}, Comm. Algebra \textbf{49} (2021), no.~9, 3760--3776.
  \MR{4290109}

\bibitem{BoBuJe2007}
George~S. Boolos, John~P. Burgess, and Richard~C. Jeffrey, \emph{Computability
  and {L}ogic}, fifth ed., Cambridge University Press, Cambridge, 2007.
  \MR{2420547}

\bibitem{Bres2007}
Matej Bre\v{s}ar, \emph{Characterizing homomorphisms, derivations and
  multipliers in rings with idempotents}, Proc. Roy. Soc. Edinburgh Sect. A
  \textbf{137} (2007), no.~1, 9--21. \MR{2359769}

\bibitem{Bres2023}
\bysame, \emph{Functional identities and their applications}, Bull. Math. Sci.
  \textbf{13} (2023), no.~3, Paper No. 2330002. \MR{4679473}

\bibitem{BCM2007}
Matej Bre\v{s}ar, Mikhail~A. Chebotar, and Wallace~S. Martindale, III,
  \emph{Functional {I}dentities}, Frontiers in Mathematics, Birkh\"{a}user
  Verlag, Basel, 2007. \MR{2332350}

\bibitem{Cohn1963b}
Paul~M. Cohn, \emph{Rings with a weak algorithm}, Trans. Amer. Math. Soc.
  \textbf{109} (1963), 332--356. \MR{153696}

\bibitem{Cohn1974}
\bysame, \emph{Progress in free associative algebras}, Israel J. Math.
  \textbf{19} (1974), 109--151. \MR{379555}

\bibitem{Cohn1985}
\bysame, \emph{Free {R}ings and their {R}elations}, second ed., London
  Mathematical Society Monographs, vol.~19, Academic Press, Inc. [Harcourt
  Brace Jovanovich, Publishers], London, 1985. \MR{800091 (87e:16006)}

\bibitem{Dhombres1988}
Jean Dhombres, \emph{Relations de d\'{e}pendance entre les \'{e}quations
  fonctionnelles de {C}auchy}, Aequationes Math. \textbf{35} (1988), no.~2-3,
  186--212. \MR{954240}

\bibitem{Disch1976}
Friedrich Dischinger, \emph{Sur les anneaux fortement {$\pi$}-r\'{e}guliers},
  C. R. Acad. Sci. Paris S\'{e}r. A-B \textbf{283} (1976), no.~8, Aii,
  A571--A573. \MR{422330}

\bibitem{Fos1951b}
Alfred~L. Foster, \emph{Ring-logics and {$p$}-rings}, Univ. California Publ.
  Math. (N.S.) \textbf{1} (1951), 385--395. \MR{44503}

\bibitem{GarThi2023}
Eusebio Gardella and Hannes Thiel, \emph{Zero-product balanced algebras},
  Linear Algebra Appl. \textbf{670} (2023), 121--153. \MR{4579922}

\bibitem{GerReich2010}
Roman Ger and Ludwig Reich, \emph{A generalized ring homomorphisms equation},
  Monatsh. Math. \textbf{159} (2010), no.~3, 225--233. \MR{2600910}

\bibitem{Henr1974}
Melvin Henriksen, \emph{Two classes of rings generated by their units}, J.
  Algebra \textbf{31} (1974), 182--193. \MR{349745}

\bibitem{Hers1961}
Israel~N. Herstein, \emph{Power maps in rings}, Michigan Math. J. \textbf{8}
  (1961), 29--32. \MR{118741}

\bibitem{Hers1969}
\bysame, \emph{Topics in {R}ing {T}heory}, University of Chicago Press,
  Chicago, Ill.-London, 1969. \MR{271135}

\bibitem{JOY1968b}
Eugene~C. Johnsen, David~L. Outcalt, and Adil Yaqub, \emph{An elementary
  commutativity theorem for rings}, Amer. Math. Monthly \textbf{75} (1968),
  288--289. \MR{225825}

\bibitem{Johnson1958}
Richard~E. Johnson, \emph{Rings with unique addition}, Proc. Amer. Math. Soc.
  \textbf{9} (1958), 57--61. \MR{95868}

\bibitem{Kapl1950}
Irving Kaplansky, \emph{Topological representation of algebras. {II}}, Trans.
  Amer. Math. Soc. \textbf{68} (1950), 62--75. \MR{32612}

\bibitem{Koma1982}
Hiroaki Komatsu, \emph{On the equational definability of addition in rings},
  Math. J. Okayama Univ. \textbf{24} (1982), no.~2, 133--136. \MR{680186}

\bibitem{Lam2001}
T.~Y. Lam, \emph{A {F}irst {C}ourse in {N}oncommutative {R}ings}, second ed.,
  Graduate Texts in Mathematics, vol. 131, Springer-Verlag, New York, 2001.
  \MR{1838439}

\bibitem{Makar1985}
Leonid Makar-Limanov, \emph{Algebraically closed skew fields}, J. Algebra
  \textbf{93} (1985), no.~1, 117--135. \MR{780486}

\bibitem{Mart1969b}
Wallace~S. Martindale, III, \emph{When are multiplicative mappings additive?},
  Proc. Amer. Math. Soc. \textbf{21} (1969), 695--698. \MR{240129}

\bibitem{McCoy1939}
Neal~H. McCoy, \emph{Generalized regular rings}, Bull. Amer. Math. Soc.
  \textbf{45} (1939), no.~2, 175--178. \MR{1563934}

\bibitem{McCune}
William McCune, \emph{Prover9 and {M}ace4 [computer software]}, 2005--2010.

\bibitem{Moln2000}
Lajos Moln\'{a}r, \emph{On isomorphisms of standard operator algebras}, Studia
  Math. \textbf{142} (2000), no.~3, 295--302. \MR{1792612}

\bibitem{MooYaq1978}
Hal~G. Moore and Adil Yaqub, \emph{Equational definability of addition in
  certain rings}, Pacific J. Math. \textbf{74} (1978), no.~2, 407--417.
  \MR{472904}

\bibitem{Procesi1987}
Claudio Procesi, \emph{A formal inverse to the {C}ayley-{H}amilton theorem}, J.
  Algebra \textbf{107} (1987), no.~1, 63--74. \MR{883869}

\bibitem{PutYaq1985}
Mohan~S. Putcha and Adil Yaqub, \emph{Equational definability of addition in
  certain noncommutative rings}, J. Algebra \textbf{92} (1985), no.~1, 1--8.
  \MR{772467}

\bibitem{Raph1974}
Robert~M. Raphael, \emph{Rings which are generated by their units}, J. Algebra
  \textbf{28} (1974), 199--205. \MR{342554}

\bibitem{ReutSchutz1987}
Christophe Reutenauer and Marcel-Paul Sch\"{u}tzenberger, \emph{A formula for
  the determinant of a sum of matrices}, Lett. Math. Phys. \textbf{13} (1987),
  no.~4, 299--302. \MR{895292}

\bibitem{Rick1948}
Charles~E. Rickart, \emph{One-to-one mappings of rings and lattices}, Bull.
  Amer. Math. Soc. \textbf{54} (1948), 758--764. \MR{26039}

\bibitem{Riley2017}
David~M. Riley, \emph{When is a power of the {F}robenius map on a
  noncommutative ring a homomorphism?}, J. Algebra \textbf{479} (2017),
  159--172. \MR{3627280}

\bibitem{RilWil1999}
David~M. Riley and Mark~C. Wilson, \emph{Associative rings satisfying the
  {E}ngel condition}, Proc. Amer. Math. Soc. \textbf{127} (1999), no.~4,
  973--976. \MR{1473677}

\bibitem{Sriv2010}
Ashish~K. Srivastava, \emph{A survey of rings generated by units}, Ann. Fac.
  Sci. Toulouse Math. (6) \textbf{19} (2010), no.~Fascicule Sp\'{e}cial,
  203--213. \MR{2675727}

\bibitem{Yaq1956}
Adil Yaqub, \emph{On the theory of ring-logics}, Canadian J. Math. \textbf{8}
  (1956), 323--328. \MR{78970}

\bibitem{Yaqub1981}
\bysame, \emph{Equational definability of addition in rings generated by
  idempotents}, Math. J. Okayama Univ. \textbf{23} (1981), no.~2, 203--204.
  \MR{638144}

\end{thebibliography}

\providecommand{\noopsort}[1]{}\def\cprime{$'$}
  \def\polhk#1{\setbox0=\hbox{#1}{\ooalign{\hidewidth
  \lower1.5ex\hbox{`}\hidewidth\crcr\unhbox0}}}
  \providecommand{\bysame}{\leavevmode\hbox to3em{\hrulefill}\thinspace}
\providecommand{\MR}{\relax\ifhmode\unskip\space\fi MR }
% \MRhref is called by the amsart/book/proc definition of \MR.
\providecommand{\MRhref}[2]{%
  \href{http://www.ams.org/mathscinet-getitem?mr=#1}{#2}
}
\providecommand{\href}[2]{#2}

\end{document}